\documentclass[twoside,11pt,reqno]{amsart}
\usepackage{amsmath,amssymb,amscd,mathrsfs,epic,empheq}
\usepackage{latexsym,amsthm,amscd, enumerate}

\makeatletter

\newif\ifbrauer@
\brauer@true

\hfuzz 5pt
\vfuzz 2pt

\textheight 214mm
\textwidth 132mm

\raggedbottom

\@addtoreset{equation}{section}
\@addtoreset{figure}{section}

\setcounter{tocdepth}{2}

\newtheorem{Proposition}{Proposition}[section]

\newtheorem{Theorem}[Proposition]{Theorem}

\def\Ustacked#1{{\:\stackrel{#1}{_{\tU}}\begin{picture}(0,0) \put(-5.6,4.5){\oval(9,15)[t]}\put(-5.6,4.5){\oval(9,15)[b]}\end{picture}}}
\def\Vstacked#1{{\:\stackrel{#1}{_{\tV}}\begin{picture}(0,0) \put(-5.6,4.5){\oval(9,15)[t]}\put(-5.6,4.5){\oval(9,15)[b]}\end{picture}}}
\def\Tstacked#1{{\:\stackrel{#1}{_{\T}}\begin{picture}(0,0) \put(-5.6,4.5){\oval(9,15)[t]}\put(-5.6,4.5){\oval(9,15)[b]}\end{picture}}}
\def\Sstacked#1{{\:\stackrel{#1}{_{\Stab}}\begin{picture}(0,0) \put(-5.6,4.5){\oval(9,15)[t]}\put(-5.6,4.5){\oval(9,15)[b]}\end{picture}}}
\def\Astacked#1{{\:\stackrel{#1}{_{\tA}}\begin{picture}(0,0) \put(-5.6,4.5){\oval(9,15)[t]}\put(-5.6,4.5){\oval(9,15)[b]}\end{picture}}}
\def\Cstacked#1{{\:\stackrel{#1}{_{\tC}}\begin{picture}(0,0) \put(-5.6,4.5){\oval(9,15)[t]}\put(-5.6,4.5){\oval(9,15)[b]}\end{picture}}}

\newbox\squ  
\setbox\squ=\hbox{\vrule width.6pt
   \vbox{\hrule height.6pt width.4em\kern1ex
         \hrule height.6pt}%
   \vrule width.6pt}

\def\0{{\bar 0}}
\def\1{{\bar 1}}
\def\Tab{\mathscr{T}}
\def\Hfin{H}
\def\Haff{\widehat{H}}

\def\tA{{\mathtt A}}
\def\tB{{\mathtt B}}
\def\tC{{\mathtt C}}
\def\tD{{\mathtt D}}
\def\T{{\mathtt T}}
\def\tU{{\mathtt U}}
\def\tV{{\mathtt V}}
\def\Stab{{\mathtt S}}

\def\cO{\mathcal{O}}

\def\ev{\operatorname{ev}}

\def\deg{\operatorname{deg}}
\def\defect{\operatorname{def}}

\def\wt{{\operatorname{wt}}}

\def\op{\operatorname{op}}

\def\down{{\scriptstyle\vee}}
\def\up{{\scriptstyle\wedge}}
\def\cross{{\scriptstyle\times}}

\def\Rep#1{\operatorname{Rep}(#1)}

\def\ev{{\operatorname{ev}}}

\def\C{{\mathbb C}}

\def\Z{{\mathbb Z}}

\def\hom{{\operatorname{Hom}}}

\def\End{{\operatorname{End}}}


\def\bz{\hbox{\boldmath{$0$}}}
\def\eps{{\varepsilon}}

\def\phi{{\varphi}}

\def\la{{\lambda}}
\def\La{{\Lambda}}

\def\@underbar#1{\settowidth{\@tempdimb}{#1}\@tempdimb=0.8\@tempdimb
                   \ooalign{#1\crcr
                         \hfil\rule[-.5mm]{\@tempdimb}{.4pt}\hfil}}

\def\bi{\text{\boldmath$i$}}

\newdimen\hoogte    \hoogte=14pt    
\newdimen\breedte   \breedte=14pt   
\newdimen\dikte     \dikte=0.5pt    

\newenvironment{young}{\begingroup
       \def\vr{\vrule height0.8\hoogte width\dikte depth 0.2\hoogte}
       \def\fbox##1{\vbox{\offinterlineskip
                    \hrule height\dikte
                    \hbox to \breedte{\vr\hfill##1\hfill\vr}
                    \hrule height\dikte}}
       \vbox\bgroup \offinterlineskip \tabskip=-\dikte \lineskip=-\dikte
            \halign\bgroup &\fbox{##\unskip}\unskip  \crcr }
       {\egroup\egroup\endgroup}
\def\diagram#1{\relax\ifmmode\vcenter{\,\begin{young}#1\end{young}\,}\else%
              $\vcenter{\,\begin{young}#1\end{young}\,}$\fi}

\begin{document}

\title[Level two Hecke algebras]{\boldmath 
An orthogonal form for level two Hecke algebras with applications
}

\author{Jonathan Brundan}

\address{Department of Mathematics, University of Oregon, Eugene, OR 97403, USA}
\email{brundan@uoregon.edu}

\thanks{2010 {\it Mathematics Subject Classification}: 20C08, 17B10.}
\thanks{Research supported in part by NSF grant no. DMS-0654147}

\begin{abstract}
\ifbrauer@
This is a survey of some recent results relating Khovanov's arc algebra to 
category $\mathcal O$ for Grassmannians, the general linear supergroup, 
and
the walled Brauer algebra.
\else
This is a survey of some recent results relating Khovanov's arc algebra to the general linear supergroup and to category $\mathcal O$ for Grassmannians.
\fi
The exposition emphasizes 
an extension of Young's orthogonal form for 
level two cyclotomic Hecke algebras.
\end{abstract}
\maketitle

\section{Introduction}

\ifbrauer@
This article is primarily intended as a survey of some of my recent joint results with Catharina Stroppel from \cite{BS1}--\cite{BSnew}. 
In that work, we 
exploited the isomorphism constructed in \cite{BKyoung}
between level two cyclotomic quotients of certain affine Hecke algebras
and quiver Hecke algebras
to establish some remarkable 
connections between Khovanov's arc algebra from \cite{K2},
the Bernstein-Gelfand-Gelfand
category $\mathcal O$ for Grassmannians,
the general linear supergroup $GL(m|n)$, and the walled Brauer algebra $B_{r,s}(\delta)$.
\else
This article is primarily intended as a survey of some of my recent joint results with Catharina Stroppel from \cite{BS1}--\cite{BS4}. 
In that work, we 
exploited the isomorphism constructed in \cite{BKyoung}
between level two cyclotomic quotients of certain affine Hecke algebras
and quiver Hecke algebras
to establish some remarkable 
connections between Khovanov's arc algebra from \cite{K2},
the general linear supergroup $GL(m|n)$, and the Bernstein-Gelfand-Gelfand
category $\mathcal O$ for Grassmannians.
\fi
Our results can be viewed as an application of some of the
ideas emerging from the development of
higher representation theory by Khovanov and Lauda \cite{KLa} and 
Rouquier \cite{Rou}; see the recent survey \cite{SICM} 
which adopts this point of view.
We are going to approach the subject instead from a more classical direction, focussing
primarily on a certain extension of Young's orthogonal form for 
level two Hecke algebras. This orthogonal form is really the technical heart of the paper \cite{BS3} but it is quite well hidden. We will also explain its $q$-analogue not mentioned at all there.

The level two Hecke algebras studied here include as a special case the
usual finite Iwahori-Hecke algebras of type $B$ when the long root parameter $q$ is generic and the short root parameter $Q$ is chosen so that the algebra is {\em not} semisimple (so $Q = -q^r$ for some $r$).
The orthogonal form allows
many of usual problems of
representation theory
to be solved for these algebras in an unusually explicit way. For example it yields
constructions of all the irreducible modules, hence we can compute their dimensions, and all the
projective indecomposable modules, hence we can identify the endomorphism algebra of a minimal projective generator.
It is a tantalizing problem to try to find something like this
for cyclotomic Hecke algebras of higher
levels, or at roots of unity, but at the moment this seems out of reach.

Once we have explained the orthogonal form, we discuss the main applications 
obtained in 
\ifbrauer@
\cite{BS3}--\cite{BSnew}.
These applications rely also on three generalizations of Schur-Weyl duality.
The first of these generalizations was developed in detail already in
\cite{BKschur}, \cite{BKariki}, and is exploited in \cite{BS3} 
to relate the level two Hecke algebras 
to category $\mathcal O$ for Grassmannians.
The second generalized Schur-Weyl duality appears for the first time in \cite{BS4},
and relates the same Hecke algebras to finite dimensional representations of the complex general linear supergroup.
Finally in \cite{BSnew} we use 
the Schur-Weyl duality between $GL(m|n)$ and the walled Brauer algebra arising from ``mixed'' tensor space
to prove a conjecture suggested by Cox and De Visscher \cite{CD}.
\else
\cite{BS3}, \cite{BS4}, 
which rely also on some generalizations of Schur-Weyl duality relating the level two Hecke algebras first
to category $\mathcal O$ for Grassmannians, and second to the category of finite dimensional representations of the complex general linear supergroup. In
the case of category $\mathcal O$, this Schur-Weyl duality was developed in detail already in
\cite{BKschur}, \cite{BKariki}, but in the case of the general linear supergroup it
appeared for the first time in \cite{BS4}. 
\fi

In the remainder of the article,
in an attempt to improve readability, we have postponed precise references to notes at the end of each section.
We fix once and for all a ground field $F$ and a parameter $\xi \in F^\times$ such that
{\em either}\,
 $\xi$ is not a root of unity in $F$,
{\em or}\,
$\xi=1$ and $F$ is of characteristic zero.
For the applications beginning in section 5, we always take $F = \C$ and $\xi=1$.

\vspace{2mm}
\noindent
{\em Acknowledgements.} I am grateful to Toshiaki Shoji and all 
the other organizers of the conference
``Representation Theory of Algebraic Groups and Quantum Groups" in Nagoya in August 2010 for giving me 
the opportunity both to speak and to write on this topic.
Thanks also to Catharina Stroppel for comments on the first draft.

\section{Level one Hecke algebras}

We begin by recalling briefly 
Young's classical orthogonal form for the symmetric group
and its (not quite so classical) analogue for the corresponding
Iwahori-Hecke algebra.
Let $\Hfin_d$ denote the finite Iwahori-Hecke algebra associated to the symmetric group $S_d$
over the field $F$ at defining parameter $\xi$.
Thus $\Hfin_d$ is a finite dimensional algebra of dimension $d!$,
with 
generators $T_1,\dots,T_{d-1}$ subject to the usual braid relations plus the quadratic relations
\begin{equation}\label{qrel}
(T_r+1)(T_r-\xi) = 0
\end{equation}
for each $r=1,\dots,d-1$.
In the {degenerate case} $\xi =1$ we can simply identify $\Hfin_d$ with the group algebra $F S_d$ of the symmetric group, so that $T_r$ is identified with the basic transposition $s_r := (r\:\:r\!+\!1)$.
The assumptions on $F$ and $\xi$
mean that $\Hfin_d$ is a semisimple
algebra.
Up to isomorphism, the irreducible $\Hfin_d$-modules are the 
{\em Specht modules} 
$\{S(\lambda)\:|\:\lambda \vdash d\}$ 
parametrized by partitions of $d$.

Given a partition $\lambda \vdash d$, 
we draw the Young diagram of $\la$ in the usual English way. A 
{\em $\la$-tableau} means a filling of the boxes of this
Young diagram
with the entries $1,\dots,d$ (each appearing exactly once).
The symmetric group $S_d$ acts on such tableaux via its natural action
on the entries.
Let $\Tab(\la)$ denote the set of all {\em standard} $\la$-tableaux,
that is, the ones whose entries
are strictly increasing both along rows from left to right and down
columns from top to bottom.
The {\em residue sequence} $\bi^\T \in \Z^d$ of $\T \in \Tab(\la)$
is the sequence $(i_1,\dots,i_d)$ where $i_r$
is the {\em residue} of the box of $\T$ containing entry $r$, that is, the
integer $(c-b)$ if this box is in row $b$ and column $c$.
Of course residues are constant along diagonals; they are the numbers labelling the boundary of $
\T$ in the following example:

$$
\phantom{\T = \qquad\diagram{1&3&4&9\cr2&5&7\cr6&8&10\cr}}
\qquad\leftrightarrow\qquad
\bi^\T = (0,-1,1,2,0,-2,1,-1,3,0)\hspace{-1.3mm}
\begin{picture}(0,0)
\put(-315.1,0){$\T = \qquad\diagram{1&3&4&9\cr2&5&7\cr6&8&10\cr}$}
\put(-282.7,-11){$\!\!\!\!_{-3}$}\put(-279,-16.8){$_{\smallsetminus}$}
\put(-282.7,3.5){$\!\!\!\!_{-2}$}\put(-279,-2.3){$_{\smallsetminus}$}
\put(-282.2,17){$\!\!\!\!_{-1}$}\put(-279,12.2){$_{\smallsetminus}$}
\put(-282.9,30.5){$_{0}$}\put(-279,26.7){$_{\smallsetminus}$}
\put(-269.4,30.5){$_{1}$}\put(-265.5,26.7){$_{\smallsetminus}$}
\put(-255.9,30.5){$_{2}$}\put(-252,26.7){$_{\smallsetminus}$}
\put(-242.4,30.5){$_{3}$}\put(-238.5,26.7){$_{\smallsetminus}$}
\put(-228.9,30.5){$_{4}$}\put(-225,26.7){$_{\smallsetminus}$}
\end{picture}
$$
Notice the residue sequence of a standard tableau always has the property that $|i_r - i_{r+1}|
\geq 1$ for every $r$, and $|i_r-i_{r+1}| > 1$ if and only if the tableau 
$s_r \cdot \T$ is again standard. Moreover the original standard tableau $\T$
can be recovered uniquely from the sequence $\bi^\T$, because for any
Young diagram there is at most one ``addable'' box of a given residue.

Young's orthogonal form gives an explicit construction
of each $S(\lambda)$ as a vector space with a
distinguished basis on
which the actions of the generators of $\Hfin_d$ are given by
explicit formulae. To write it down, let
$$
[n] := \left\{\begin{array}{ll}
n&\text{if $\xi=1$,}\\
\frac{\xi^n-1}{\xi-1}&\text{if $\xi \neq 1$,}
\end{array}\right.
$$
for any $n\in \Z$.

\begin{Theorem}[Young's orthogonal form]\label{yot}
For $\lambda \vdash d$, the irreducible $\Hfin_d$-module 
$S(\lambda)$ has a basis
$\{v_\T\:|\:\T \in \Tab(\lambda)\}$ on which $T_r \in \Hfin_d$ 
acts by
\begin{equation}\label{ysf}
T_r v_\T :=  \left(\psi_r \left(1-\textstyle{\frac{1}{[i_r-i_{r+1}]}}\right) - {\textstyle\frac{1}{[i_r-i_{r+1}]}}\right) v_\T,
\end{equation}
where $\bi := \bi^\T$ and
$\psi_r$ is the endomorphism with
\begin{equation}\label{gym}
\psi_r v_{\T}
:=
\left\{
\begin{array}{ll}
v_{s_r\cdot \T} &\hbox{if $s_r\cdot\T\in \Tab(\la)$},\\
0 &\hbox{otherwise.}
\end{array}
\right.
\end{equation}
\end{Theorem}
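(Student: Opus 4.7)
The plan is to take the formulas (\ref{ysf})--(\ref{gym}) as the \emph{definition} of operators $T_r$ on the vector space $V(\lambda)$ with basis $\{v_\T : \T \in \Tab(\lambda)\}$, verify directly that these satisfy the defining relations of $\Hfin_d$, and then identify $V(\lambda)$ with the Specht module $S(\lambda)$.

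First I would check the quadratic relation (\ref{qrel}). If $s_r \cdot \T$ is not standard, then $r, r+1$ lie in the same row or column of $\T$, forcing $i_r - i_{r+1} \in \{-1,+1\}$; using $[-1] = -\xi^{-1}$ and $[1]=1$ one finds $T_r v_\T = \xi v_\T$ or $T_r v_\T = -v_\T$ respectively, each of which satisfies (\ref{qrel}) trivially. If instead $s_r \cdot \T$ is standard then $v_\T$ and $v_{s_r\cdot\T}$ span a two-dimensional $T_r$-invariant subspace; letting $M$ be the matrix of $T_r$ in this basis, a short calculation using the elementary identity $[-d] = -\xi^{-d}[d]$ yields $\operatorname{tr}(M) = \xi-1$ and $\det(M) = -\xi$, so $(M+1)(M-\xi) = 0$.

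Next, the far commutation relations $T_r T_s = T_s T_r$ for $|r-s|>1$ are immediate, since the scalars involved depend only on disjoint pairs of residues and $s_r, s_s$ commute as permutations of tableau entries. The braid relation $T_r T_{r+1} T_r = T_{r+1} T_r T_{r+1}$ is the main obstacle, and I expect this to be the technical heart of the proof. It splits into cases according to the configuration of the three boxes of $\T$ containing $r, r+1, r+2$: whether any two of them share a row or column, and which of $s_r \cdot \T$, $s_{r+1}\cdot \T$, $s_rs_{r+1} \cdot \T$, $s_{r+1}s_r \cdot \T$, $s_rs_{r+1}s_r \cdot \T$ are standard. In each case the identity collapses to an elementary relation in $F$ among the three quantum integers $[a], [b], [a+b]$, where $a := i_{r+1}-i_r$ and $b := i_{r+2}-i_{r+1}$. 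A conceptual organizing device is to introduce modified intertwiners built from the $T_r$ and the Jucys--Murphy elements $L_s \in \Hfin_d$; on a common eigenbasis for the $L_s$ these intertwiners act by permuting basis vectors up to invertible scalars, and the braid relation for the $T_r$ becomes equivalent to the manifest braid relation for these permutation-like operators.

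To conclude, I would identify $V(\lambda) \cong S(\lambda)$. An inductive computation from (\ref{ysf}) shows that $L_s$ acts on $v_\T$ by the scalar $[i^\T_s]$. Since distinct standard $\lambda$-tableaux have distinct residue sequences, the $v_\T$ are simultaneous eigenvectors for the commutative Jucys--Murphy subalgebra with pairwise distinct eigenvalue systems, hence linearly independent. The module $V(\lambda)$ is irreducible because the $\psi_r$ act transitively on the $v_\T$ (any two standard $\lambda$-tableaux are connected by a sequence of moves $\T \mapsto s_r \cdot \T$ through standard tableaux). Combined with the equality $|\Tab(\lambda)| = \dim S(\lambda)$ and the classification of irreducible $\Hfin_d$-modules by partitions of $d$, this yields $V(\lambda) \cong S(\lambda)$.
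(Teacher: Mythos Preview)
The paper does not actually prove Theorem~\ref{yot}: it is stated as a classical result and attributed in the Notes to Young \cite{Young} and Hoefsmit \cite{H}, so there is no ``paper's proof'' to compare against. Your strategy---define the operators by (\ref{ysf}), verify the Hecke relations directly, then identify the resulting irreducible with $S(\lambda)$---is the standard one, and the verification of the quadratic relation you outline is correct. The braid relation really is the labor-intensive step, and your suggestion to pass to intertwiners built from the $T_r$ and the $L_s$ is exactly how Hoefsmit and later authors streamline the case analysis.

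Two small corrections. First, the eigenvalue of the Jucys--Murphy element $L_s$ on $v_\T$ is $i_s$ in the degenerate case and $\xi^{i_s}$ in the non-degenerate case (see (\ref{wb})), not $[i_s]$ as you wrote; this does not affect your argument since distinct residue sequences still give distinct eigenvalue systems. Second, your final sentence is not quite enough to pin down the isomorphism $V(\lambda)\cong S(\lambda)$: the equality $\dim V(\lambda)=|\Tab(\lambda)|=\dim S(\lambda)$ together with irreducibility does not by itself single out $S(\lambda)$ among the irreducibles, since for instance $S(\lambda)$ and $S(\lambda')$ have the same dimension. You should instead appeal to the Jucys--Murphy spectrum you already computed: the multiset of residue sequences appearing in $V(\lambda)$ determines $\lambda$, and matching it against the known spectrum on $S(\lambda)$ (or, equivalently, observing that the $V(\lambda)$ for distinct $\lambda$ are pairwise non-isomorphic and hence exhaust the irreducibles, then matching via the branching rule) gives the identification.
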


Recall also that the {\em Jucys-Murphy elements}
in $\Hfin_d$ are the commuting elements
$1 = L_1,\dots,L_d$ defined from
$L_r := \xi^{1-r} T_{r-1} \cdots T_2 T_1 T_2 \cdots T_{r-1}$
in the case $\xi \neq 1$, or the elements
$0 = L_1,\dots,L_d$ defined from
$L_r := \sum_{1 \leq s < r} (s\:r)$ in the case $\xi = 1$.
Although not obvious from (\ref{ysf}), 
the Jucys-Murphy elements act on Young's basis 
so that 
\begin{equation}\label{wb}
L_r v_\T =
\left\{
\begin{array}{ll}
i_r v_\T&\text{if $\xi = 1$,}\\
\xi^{i_r} v_\T\qquad\qquad&\text{if $\xi \neq 1$,}
\end{array}\right.
\end{equation}
where $\bi^\T = (i_1,\dots,i_d)$.
Thus Young's basis consists of simultaneous eigenvectors for the Jucys-Murphy elements.

We end the section by defining some
explicit but rather complicated power series $p_r(\bi), q_r(\bi) \in F[[y_1,\dots,y_d]]$.
These will be needed at a crucial point in the next section 
in order to write down our extension of Young's orthogonal form.
For fixed $\bi \in \Z^d$ and $1 \leq r < d$, set
\begin{align}
N &:= \left\{
\begin{array}{ll}
(y_{r+1}+i_{r+1}+1)-(y_r+i_r)&\text{if $\xi = 1$,}\\
\xi^{i_r}(1-y_r)-\xi^{i_{r+1}+1}(1-y_{r+1})&\text{if $\xi \neq 1$,}
\end{array}
\right.\\
D &:= \left\{
\begin{array}{ll}
(y_{r+1}+i_{r+1})-(y_r+i_r)\hspace{10.4mm}&\text{if $\xi = 1$,}\\
\xi^{i_r}(1-y_r)-\xi^{i_{r+1}}(1-y_{r+1})&\text{if $\xi \neq 1$.}
\end{array}
\right.
\end{align}
Note $D$ is a unit in $F[[y_1,\dots,y_d]]$ if $i_r \neq i_{r+1}$, so it then makes sense to define
\begin{align}\label{pdef}
p_r(\bi) &:= \left\{
\begin{array}{ll}
1&\text{if $i_r = i_{r+1}$,}\\
1-N/D&\text{if $i_r \neq i_{r+1}$},
\end{array}
\right.\\
q_r(\bi) &:= \left\{
\begin{array}{ll}
\xi^{-i_r} N\!\hspace{4.9mm}&\text{if $i_r = i_{r+1}$,}\\
N/D&\text{if $|i_r-i_{r+1}|>1$,}\\
N/D^2&\text{if $i_r = i_{r+1}-1$,}\\
\xi^{i_r}&\text{if $i_r = i_{r+1}+1$.}
\end{array}
\right.\label{qdef}
\end{align}
To make the connection with Theorem~\ref{yot}, we 
observe 
on setting $y_1=\cdots=y_d=0$ that
$p_r(\bi)$ evaluates to
${\textstyle\frac{1}{[i_r-i_{r+1}]}}$ 
assuming $i_r \neq i_{r+1}$, and 
$q_r(\bi)$ evaluates to $1-\textstyle{\frac{1}{[i_r-i_{r+1}]}}$
assuming
$|i_r-i_{r+1}|> 1$.
So 
taking $y_1=\cdots=y_d=0$ 
the formula (\ref{ysf}) can be rewritten as
\begin{equation}\label{ysfcomp}
T_r v_{\T} = (\psi_r q_r(\bi) - p_r(\bi)) v_{\T},
\end{equation}
for any $\T \in \Tab(\lambda)$ and  $\bi := \bi^\T$.

\vspace{2mm}
\noindent
{\em Notes.}
Theorem~\ref{yot} originates in \cite{Young}, and its extension to the Iwahori-Hecke algebra was worked out by
Hoefsmit in \cite{H}.
The account here is based closely on \cite[$\S$5]{BKyoung} in which
the endomorphisms $\psi_r$ and $y_r$ are interpreted
as certain {\em Khovanov-Lauda-Rouquier generators} for $\Hfin_d$,
satisfying the defining relations of the cyclotomic quiver Hecke algebras of
\cite{KLa}, \cite{Rou} attached to the infinite linear quiver $A_\infty$ and the fundamental
dominant weight $\Lambda_0$.
The formulae (\ref{pdef}), (\ref{qdef}) are exactly
\cite[(3.22), (3.30)]{BKyoung} if $\xi = 1$
and \cite[(4.27), (4.36)]{BKyoung} if $\xi \neq 1$.

\section{Level two Hecke algebras}

Continuing to work over the ground field $F$, let $\Haff_d$ be the affine Hecke algebra 
on generators
$\{X_1^{\pm 1},\dots,X_d^{\pm 1}\} \cup \{T_1,\dots,T_{d-1}\}$
if $\xi \neq 1$, or its degenerate analogue on generators
$\{x_1,\dots,x_d\}\cup\{s_1,\dots,s_{d-1}\}$
if $\xi = 1$; in the latter case it is convenient to set
$X_r := x_r$ and $T_r:=s_r$. 
The relations are as follows:
the $X_r$'s commute,
the $T_r$'s satisfy the defining relations of
the finite Iwahori-Hecke algebra $\Hfin_d$ from (\ref{qrel}),
$T_r X_{s} = X_{s} T_r$ if $s \neq r,r+1$,
and finally 
\begin{equation*}
\left\{
\begin{array}{rll}
&T_r X_r T_r = \xi X_{r+1}
&\qquad\text{if $\xi \neq 1$,}\\
&\,\,s_r x_{r+1} = x_r s_r+1
&\qquad\text{if $\xi = 1$.}
\end{array}\right.
\end{equation*}
The finite Iwahori-Hecke algebra $\Hfin_d$ is a subalgebra of $\Haff_d$
in the obvious way. Moreover it is also a quotient algebra in many
different ways: for each $r \in \Z$
there is an {\em evaluation homomorphism}
\begin{equation}\label{Eval}
\ev_r:\Haff_d \twoheadrightarrow \Hfin_d
\end{equation}
which is the identity on the subalgebra $\Hfin_d$
and maps $X_1 \mapsto \xi^r$ if $\xi \neq 1$ or $x_1 \mapsto r$ if
$\xi = 1$.
The Jucys-Murphy element $L_r\in\Hfin_d$
from (\ref{wb}) is $\ev_0(X_r)$.

More generally, we can consider the quotient of $\Haff_d$ by the
two-sided ideal generated by a monic
polynomial of degree $k$ in $X_1$.
This gives a finite dimensional algebra of
dimension $k^d d!$ known as an {\em Ariki-Koike algebra} of level $k$ or a 
{\em cyclotomic Hecke algebra} of type $G(k,1,d)$.
The original finite Iwahori-Hecke algebra $\Hfin_d$ corresponds to level one.
In the remainder of the article we are interested in the level two
case.
So we fix integers $p, q\in \Z$ and set
\begin{equation}\label{ker}
H_d^{p,q} := \left\{
\begin{array}{ll}
\Haff_d \Big/ \big\langle \,(X_1-\xi^p)(X_1-\xi^q)\,\big\rangle
&\text{if $\xi \neq 1$,}\\
\Haff_d \Big/ \big\langle \,(x_1-p)(x_1-q)\,\big\rangle
&\text{if $\xi = 1$},
\end{array}\right.
\end{equation}
of dimension $2^d d!$.
We will use the same notation for the generators $T_1,\dots,T_{d-1}$ 
of $\Haff_d$ and for their canonical images in $H_d^{p,q}$,
and denote the canonical images of $X_1,\dots,X_{d}$ by $L_1,\dots,L_{d}$.

The algebra $H_d^{p,q}$ is semisimple if and only if $d \leq |q-p|$,
in which case its representation theory is just as easy as the level
one case discussed in the previous section.
It turns out that the representation theory of $H_d^{p,q}$ is still 
very manageable even when it is not semisimple. In fact it provides
delightful ``baby model'' for the representation theory of arbitrary
cyclotomic Hecke algebras.
We still have {\em Specht modules} $S(\lambda)$ but they are no longer irreducible;
they are parametrized now by
{\em bipartitions} $\lambda \Vdash d$, which are ordered 
pairs $\lambda = (\lambda^L, \lambda^R)$ of partitions
$\lambda^L \vdash a$ and $\lambda^R 
\vdash b$ such that $d=a+b$. For such a bipartition $\lambda$, 
the corresponding Specht module is
\begin{equation}
S(\lambda) := \Haff_d \otimes_{\Haff_{a} \otimes \Haff_{b}} \left( 
\ev_p^* S(\lambda^L) \boxtimes \ev_q^* S(\lambda^R)
\right),
\end{equation}
where $\Haff_{a} \otimes \Haff_{b}$ is the parabolic
subalgebra of $\Haff_d$, and 
$\ev_p^* S(\lambda^L) \boxtimes \ev_q^* S(\lambda^R)$ denotes the
$\Haff_{a} \otimes \Haff_{b}$-module arising as the outer tensor
product
of level one Specht modules $S(\lambda^L)$ and $S(\lambda^R)$
viewed as modules over $\Haff_{a}$ and $\Haff_{b}$, respectively, via the
evaluation homomorphisms $\ev_p:\Haff_{a} \twoheadrightarrow
\Hfin_{a}$ 
and $\ev_q:\Haff_{b} \twoheadrightarrow \Hfin_{b}$ as in (\ref{Eval}).
This induced module is {\em a priori} an $\Haff_d$-module, but one can
check from (\ref{ker}) that it factors through to the
quotient $H_d^{p,q}$, hence $S(\lambda)$ is a well-defined
$H_d^{p,q}$-module.

We say that a bipartition $\lambda \Vdash d$
is {\em restricted} if the appropriate one of the following conditions holds
for each $i \geq 1$:
\begin{equation}\label{resdef}
\left\{
\begin{array}{ll}
\lambda_i^L \leq \lambda_i^R+q-p\qquad&\text{if $p \leq q$,}\\
\lambda_{i+p-q}^L \leq \lambda_{i}^R&\text{if $p \geq q$,}
\end{array}
\right.
\end{equation} 
where $\lambda_1^L \geq \lambda_2^L \geq \cdots$ are the parts of $\lambda^L$ and 
$\lambda_1^R \geq \lambda_2^R \geq \cdots$ are the parts of $\lambda^R$. 
The following theorem gives a classification of the irreducible
$H_d^{p,q}$-modules.

\begin{Theorem}\label{vazt}
If $\lambda \Vdash d$ is restricted, then the Specht module
$S(\lambda)$ has a unique irreducible quotient 
denoted $D(\lambda)$, and the modules
\begin{equation}\label{ic}
\{D(\lambda)\:|\:\text{for all restricted }\lambda \Vdash d\}
\end{equation}
give a complete set of pairwise inequivalent irreducible $H_d^{p,q}$-modules.
\end{Theorem}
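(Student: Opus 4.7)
My approach is to combine the cellular algebra structure on $H_d^{p,q}$ of Dipper-James-Mathas with an extension of Young's orthogonal form from Section 2 to the level two case. The strategy runs in three stages: construct an eigenbasis of each Specht module for the Jucys-Murphy elements, use a contravariant form to extract an irreducible quotient, and appeal to a counting argument for completeness.

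First, I would extend the residue combinatorics. A standard $\lambda$-bitableau $\T = (\T^L, \T^R)$ for $\lambda = (\lambda^L, \lambda^R) \Vdash d$ fills $\lambda$ with the entries $1, \dots, d$ so that both components are standard; the residue of the box containing $r$ is declared to be $p + (c-b)$ if that box lies in $\T^L$ and $q + (c-b)$ if in $\T^R$, yielding a residue sequence $\bi^\T \in \Z^d$. Inducing Young's orthogonal basis of Theorem~\ref{yot} along the parabolic $\Haff_a \otimes \Haff_b \subset \Haff_d$ appearing in the definition of $S(\lambda)$ produces a basis $\{v_\T\}$ indexed by standard bitableaux. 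A direct computation, using the power series $p_r(\bi), q_r(\bi)$ from (\ref{pdef})--(\ref{qdef}) which were designed with exactly this extension in mind, should show that the $L_r$'s act diagonally with the eigenvalues dictated by $\bi^\T$ as in (\ref{wb}), and that the $T_r$'s act by level two analogues of (\ref{ysfcomp}). This extended orthogonal form is the technical centerpiece of the argument.

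Next, I would equip each $S(\lambda)$ with the Dipper-James-Mathas contravariant form $\langle \cdot, \cdot \rangle_\lambda$. Since the multiset $(\xi^{i_1}, \dots, \xi^{i_d})$ of eigenvalues determines the bitableau $\T$ uniquely, the form is diagonal with respect to the basis $\{v_\T\}$, so its radical is an $H_d^{p,q}$-submodule whose complement is spanned by the $v_\T$ with $\langle v_\T, v_\T \rangle_\lambda \neq 0$. By general cellular algebra theory the quotient $D(\lambda) := S(\lambda) / \rad \langle \cdot, \cdot \rangle_\lambda$ is then either zero or absolutely irreducible, and when non-zero it is the unique irreducible quotient of $S(\lambda)$. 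The crucial combinatorial step is to evaluate $\langle v_{\T_\lambda}, v_{\T_\lambda}\rangle_\lambda$ on the initial bitableau $\T_\lambda$ (filling $\lambda^L$ and then $\lambda^R$ in row-reading order), reducing it via the orthogonal form to a product of linear factors in the residues, and to extract the inequalities (\ref{resdef}) as precisely the condition that no factor in this product vanishes.

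Pairwise inequivalence of the $D(\lambda)$ for distinct restricted $\lambda$ is immediate from the distinctness of the multisets of Jucys-Murphy eigenvalues, which are preserved on passing to the simple quotient. For completeness I would appeal to Ariki's categorification theorem, which identifies the number of isomorphism classes of irreducible $H_d^{p,q}$-modules with the dimension of the weight space of degree $d$ in the basic integrable highest weight module $V(\La_p + \La_q)$ of $\mathfrak{sl}_\infty$ (in the degenerate case $\xi = 1$; the generic case is analogous), and a standard crystal computation matches this dimension with the number of restricted bipartitions of $d$. The main obstacle is the explicit computation of $\langle v_{\T_\lambda}, v_{\T_\lambda}\rangle_\lambda$ and the identification of its vanishing locus with the somewhat intricate conditions (\ref{resdef}); once this is in hand, the remaining steps are bookkeeping with residue sequences.
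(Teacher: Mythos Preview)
Your proposal rests on a claim that fails precisely in the non-semisimple regime where the theorem has content. You assert that the Jucys--Murphy elements $L_r$ act diagonally on the induced basis $\{v_\T\}$ of $S(\lambda)$, and that the residue sequence $\bi^\T$ determines the bitableau $\T$ uniquely. Neither holds in level two once $d > |q-p|$. Take $p=q=0$ and $\lambda = ((1),(21))$: the bitableaux
\[
\T_1 = \left(\,\diagram{1\cr}\,,\,\diagram{2&4\cr 3\cr}\,\right),\qquad
\T_2 = \left(\,\diagram{2\cr}\,,\,\diagram{1&4\cr 3\cr}\,\right)
\]
both have residue sequence $(0,0,-1,1)$. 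The paper's extended orthogonal form (\ref{f2}) gives $L_r v_\T^\lambda = (y_r + i_r) v_\T^\lambda$ with $y_r$ \emph{nilpotent but nonzero}; concretely $y_1 v_{\T_2}^\lambda = v_{\T_1}^\lambda$, so $L_1$ has a genuine Jordan block. Consequently the contravariant form is not diagonal in this basis, your computation of $\langle v_{\T_\lambda}, v_{\T_\lambda}\rangle_\lambda$ as a product of residue differences breaks down, and the argument that its nonvanishing characterises the restricted condition (\ref{resdef}) does not go through. The whole point of the power series $p_r(\bi), q_r(\bi)$ from (\ref{pdef})--(\ref{qdef}) in positive $y$-degree is to handle this nilpotent part; setting $y_r=0$ as you implicitly do recovers only the semisimple situation of Theorem~\ref{yot}.

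The paper does not prove Theorem~\ref{vazt} directly at all: it is quoted from Vazirani \cite[Theorem 3.4]{Vaz}, whose argument uses branching rules and crystal combinatorics rather than an orthogonal form or a bilinear-form computation. Your appeal to Ariki's theorem for the count is in the right spirit and is compatible with that approach, but the heart of your argument --- extracting $D(\lambda)$ and the restricted condition from diagonal Jucys--Murphy eigenvalues --- needs to be replaced entirely.
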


Next we want to describe the
{\em composition multiplicities} $[S(\lambda):D(\mu)]$ of
Specht modules, all of which turn out to be either zero or one. 
First we need some combinatorics.
By a {\em weight diagram} we mean 
a horizontal number line with vertices 
at all integers labelled by 
one of the symbols $\circ, \down, \up$ and $\cross$; we require
moreover that it is impossible to find a vertex labelled $\down$ to the left of a vertex labelled $\up$ outside of some finite subset of the vertices.
We always identify bipartitions with 
particular weight diagrams so that $\lambda \Vdash d$ corresponds to the weight
diagram obtained by putting the symbol $\down$ at all the vertices 
indexed by the set
$\{p+\lambda_1^L, p+\lambda_2^L-1, p+\lambda_3^L-2,\dots\}$,
the symbol $\up$ at all the vertices indexed by the set
$\{q+\lambda_1^R, q+\lambda_2^R-1, q+\lambda_3^R-2,\dots\}$,
and interpreting vertices labelled
both $\down$ and $\up$ as the label $\cross$
and vertices labelled neither $\down$ nor $\up$ as the label $\circ$.
Of course this depends implicitly on the fixed choices of $p$ and $q$.
Here are some examples:
\begin{align*}
(\varnothing, \varnothing)
&=
\!\!\!\begin{picture}(240,12)
\put(90,14){$_p$}
\put(8,-.3){$\cdots$}
\put(227,-.3){$\cdots$}
\put(25,2.3){\line(1,0){198}}
\put(30,.4){$\cross$}
\put(50,.4){$\cross$}
\put(70,.4){$\cross$}
\put(90,.4){$\cross$}
\put(110,-2.4){$\up$}
\put(130,-2.4){$\up$}
\put(150,-2.4){$\up$}
\put(170,-.4){$\circ$}
\put(190,-.4){$\circ$}
\put(210,-.4){$\circ$}
\end{picture}\,,&q&=p+3,
\\  ((1), (3^2 2))&=
\!\!\!\begin{picture}(240,0)
\put(8,-.3){$\cdots$}
\put(227,-.3){$\cdots$}
\put(25,2.3){\line(1,0){198}}
\put(30,.4){$\cross$}
\put(50,2.4){$\down$}
\put(70,2.4){$\down$}
\put(90,-2.4){$\up$}
\put(110,2.4){$\down$}
\put(130,-2.4){$\up$}
\put(150,-2.4){$\up$}
\put(170,-.4){$\circ$}
\put(190,-.4){$\circ$}
\put(210,-.4){$\circ$}
\end{picture}\,,
&q&=p,
\\ ((5 3^2), (41))
&=
\!\!\!\begin{picture}(240,0)
\put(8,-.3){$\cdots$}
\put(227,-.3){$\cdots$}
\put(25,2.3){\line(1,0){198}}
\put(30,.4){$\cross$}
\put(50,-.4){$\circ$}
\put(70,-2.4){$\up$}
\put(90,-.4){$\circ$}
\put(110,2.4){$\down$}
\put(130,2.4){$\down$}
\put(150,-2.4){$\up$}
\put(170,-.4){$\circ$}
\put(190,2.4){$\down$}
\put(210,-.4){$\circ$}
\end{picture}\,,&q&=p-1.
\end{align*}
These examples
have infinitely many vertices labelled
$\cross$ to the left and infinitely many vertices labelled $\circ$ to the right,
as do {\em all} weight diagrams arising from bipartitions.
Later in the article, we will meet other sorts of weight diagrams which 
are not of this form.

Given a weight diagram $\lambda$, a {\em $\lambda$-cap diagram} is a diagram obtained by
attaching 
caps $\cap$ and rays up to infinity $|$
to
all the vertices of $\lambda$ labelled $\down$ or $\up$, so that there are no crossings of caps and/or rays,
the labels at the ends of caps are either 
$\begin{picture}(18,11) 
\put(10,1){\oval(14,20)[t]}
\put(0.2,.8){$\down$}
\put(14.2,-3){$\up$}
\end{picture}$ (``counter-clockwise'')
or 
$\begin{picture}(18,11) 
\put(10,1){\oval(14,20)[t]}
\put(14.2,.8){$\down$}
\put(.2,-3){$\up$}
\end{picture}$ (``clockwise''), all rays labelled $\up$ are strictly
to the left of all rays labelled $\down$, and the total number of caps is finite.
Here are some examples:
\begin{align*}
&\begin{picture}(240,27)
\put(8,12.7){$\cdots$}
\put(227,12.7){$\cdots$}
\put(25,15.4){\line(1,0){198}}
\put(30,13.4){$\cross$}
\put(50,13.4){$\cross$}
\put(70,13.4){$\cross$}
\put(90,13.4){$\cross$}
\put(110.1,10.7){$\up$}
\put(130.1,10.7){$\up$}
\put(150.1,10.7){$\up$}
\put(170,12.6){$\circ$}
\put(190,12.6){$\circ$}
\put(210,12.6){$\circ$}
\put(112.9,15){\line(0,1){15}}
\put(132.9,15){\line(0,1){15}}
\put(152.9,15){\line(0,1){15}}
\end{picture}
\\
&\begin{picture}(240,27)
\put(122.9,11.2){\oval(20,20)[t]}
\put(82.9,11.2){\oval(20,20)[t]}
\put(102.9,11.2){\oval(100,44)[t]}
\put(8,8.4){$\cdots$}
\put(227,8.4){$\cdots$}
\put(25,11){\line(1,0){198}}
\put(30,9.1){$\cross$}
\put(50,11.5){$\down$}
\put(70,11.5){$\down$}
\put(90,6.5){$\up$}
\put(110,11.5){$\down$}
\put(130,6.5){$\up$}
\put(150,6.5){$\up$}
\put(170,8.3){$\circ$}
\put(190,8.3){$\circ$}
\put(210,8.3){$\circ$}
\end{picture}
\\
&\begin{picture}(240,17)
\put(8,2.7){$\cdots$}
\put(227,2.7){$\cdots$}
\put(25,5.3){\line(1,0){198}}
\put(30,3.4){$\cross$}
\put(50,2.6){$\circ$}
\put(70,.7){$\up$}
\put(90,2.6){$\circ$}
\put(110,5.7){$\down$}
\put(130,5.7){$\down$}
\put(150,.7){$\up$}
\put(170,2.6){$\circ$}
\put(190,5.7){$\down$}
\put(210,2.6){$\circ$}
\put(143,5.4){\oval(20,20)[t]}
\put(192.8,5){\line(0,1){15}}
\put(112.8,5){\line(0,1){15}}
\put(72.8,5){\line(0,1){15}}
\end{picture}
\end{align*}
\noindent
The {\em weight} $\wt(\tA)$ of a $\lambda$-cap diagram $\tA$
is the weight diagram obtained from $\la$
by switching the labels at the
ends of all the clockwise caps of $\tA$. 
Observe in particular that there is always a unique $\lambda$-cap diagram of weight $\lambda$.

We say that a $\lambda$-cap diagram is {\em restricted}
if all its rays are labelled in the same way; in the above examples, the first two are restricted but the third is not.
We say that the weight diagram $\lambda$ itself is
{restricted} if the unique $\lambda$-cap diagram of weight $\lambda$ is restricted. 
In the case that $\lambda$ is the weight diagram arising from a bipartition
of $d$, $\lambda$ is restricted as a weight diagram 
if and only if it is a restricted bipartition
in the sense of (\ref{resdef}); e.g. in our running example the first two bipartitions are restricted, but the third is not.

Finally define a reflexive and anti-symmetric 
relation $\supset$ on weight diagrams by declaring that
$\lambda \supset \mu$ 
if there exists a (necessarily unique) $\la$-cap diagram of weight $\mu$. 
For fixed $\mu$ it is easy to find all $\la$ such that 
$\la \supset \mu$: they are all the weight diagrams that can be 
obtained from $\mu$ 
by switching the labels at the ends of some subset 
of the caps in the unique $\mu$-cap diagram of weight $\mu$.
It follows easily that the transitive closure of the relation $\supset$
is the same as the {\em Bruhat order} $\geq$ on weight diagrams generated by 
the elementary relation $\cdots\up\cdots\down\cdots \geq\cdots\down\cdots\up\cdots$.
On the other hand, for fixed $\lambda$ it is trickier to find all $\mu$ such that $\lambda \supset \mu$.
For example if $\lambda = 
\up\:\down\:\up\:\down$ (and all other vertices
are labelled $\circ$ or $\times$) there are five $\lambda$-cap diagrams
hence five weights $\mu$ with $\lambda \supset \mu$,
namely,
$\up\:\down\:\up\:\down$,
$\down\:\up\:\up\:\down$,
$\up\:\down\:\down\:\up$,
$\down\:\down\:\up\:\up$ and
$\down\:\up\:\down\:\up$. It is no coincidence here that 
the third Catalan number $C_3 = 5$: consider
$\lambda = \up\:\down\:\up\:\down\:\up\:\down$ to get $C_4$ and so on.

\begin{Theorem}\label{cm}
For $\lambda, \mu \Vdash d$ with $\mu$ restricted, we have that
$$
[S(\lambda):D(\mu)] = \left\{
\begin{array}{ll}
1&\text{if $\lambda \supset \mu$,}\\
0&\text{otherwise.}
\end{array}
\right.
$$
\end{Theorem}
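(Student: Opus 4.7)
The plan is to leverage the extended Young orthogonal form for $H_d^{p,q}$, which presumably supplies an explicit basis $\{v_\T : \T \in \Tab(\lambda)\}$ of $S(\lambda)$ indexed by standard $\lambda$-bitableaux, with actions of $T_r$ generalizing (\ref{ysfcomp}) and with the Jucys-Murphy elements $L_r$ acting diagonally by eigenvalues determined by a residue sequence $\bi^\T$ (where residues in the left and right components of the bitableau are shifted by $p$ and $q$, respectively). This diagonalization reduces the computation of $[S(\lambda):D(\mu)]$ to comparing formal characters of $S(\lambda)$ and $D(\mu)$ with respect to the commutative Jucys--Murphy subalgebra generated by $L_1, \dots, L_d$.

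First I would record the JM character of each $S(\lambda)$: it is the generating function $\sum_{\T \in \Tab(\lambda)} e^{\bi^\T}$, where the sum runs over standard $\lambda$-bitableaux. Second, using the orthogonal form I would identify the radical of $S(\mu)$ for restricted $\mu$ and thereby pin down a canonical basis of $D(\mu)$; concretely, $D(\mu)$ should correspond to those $\mu$-bitableaux $\T$ whose residue sequence matches a distinguished ``ground state,'' giving an explicit formula for $\mathrm{ch}\, D(\mu)$ in terms of the JM spectrum.

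The combinatorial heart is then to establish a bijection between pairs $(\T, \mu)$ with $\T \in \Tab(\lambda)$ and $\mu$ restricted, and pairs $(\tA, \T')$ where $\tA$ is a $\lambda$-cap diagram of weight $\mu$ and $\T'$ runs over the $D(\mu)$-basis constructed above. Morally, a cap in $\tA$ corresponds to an elementary Bruhat move $\cdots\up\cdots\down\cdots \mapsto \cdots\down\cdots\up\cdots$, and switching labels at clockwise caps matches the recipe for passing from the residue sequence of a standard $\lambda$-bitableau to a JM eigenvalue appearing in $D(\mu)$. Given such a bijection, the identity
\begin{equation*}
\mathrm{ch}\, S(\lambda) \;=\; \sum_{\mu : \lambda \supset \mu}  \mathrm{ch}\, D(\mu)
\end{equation*}
follows, and since the characters of the $D(\mu)$ are linearly independent (they have distinct ``leading'' JM weights corresponding to the ground state of $\mu$), the composition multiplicities $[S(\lambda):D(\mu)]$ must equal the coefficients appearing, namely $0$ or $1$ according to whether $\lambda \supset \mu$.

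The main obstacle is the second step: controlling the radical of $S(\mu)$ precisely enough to obtain the explicit JM character of $D(\mu)$, together with the multiplicity-one property. This is where the $q$-analogue of the extended orthogonal form does the heavy lifting, via an explicit description of primitive vectors analogous to (\ref{ysfcomp}) that exhibits the maximal submodule. A fall-back strategy, should the direct combinatorial attack prove intractable, is to pass through the isomorphism of \cite{BKyoung} with the cyclotomic KLR algebra at level two and exploit the $\mathfrak{sl}_\infty$-categorification on $\bigoplus_d K_0(H_d^{p,q})$; there the Specht modules form a standard basis of the level two Fock space, the $D(\mu)$ form Lusztig's canonical basis, and the transition matrix in this generic setting is known to be encoded precisely by cap diagrams, yielding the theorem.
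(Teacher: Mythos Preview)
Your primary strategy has two genuine gaps.

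\emph{First, a circularity.} Step two---``identify the radical of $S(\mu)$ for restricted $\mu$ and thereby pin down a canonical basis of $D(\mu)$''---is not a lemma on the way to the theorem; it \emph{is} the theorem. Knowing $\rad S(\mu)$ for each restricted $\mu$ determines all decomposition numbers by induction on the Bruhat order, since $[S(\mu):D(\mu)]=1$. You give no concrete mechanism for this step beyond ``use the orthogonal form,'' but the orthogonal form by itself does not hand you the radical: the paper obtains the explicit description of $D(\mu)$ (basis $\Tab_0(\mu)$) only after passing to the quasi-hereditary cover $K_d^{p,q}$, identifying it with the arc algebra $K_\Lambda$, and applying the Schur functor (Theorems~\ref{qh}, \ref{sb}, \ref{sf}). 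Once that is done, Theorem~\ref{sf} already exhibits an explicit filtration of $S(\lambda)$ with sections exactly the $D(\mu)$ for $\lambda\supset\mu$, and Theorem~\ref{cm} is immediate---no character argument needed.

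\emph{Second, the linear independence step fails as stated.} In level two, distinct standard bitableaux can share the same residue sequence (e.g.\ with $p=q$, the two $((1),(1))$-tableaux both have $\bi^\T=(0,0)$; more drastically $\operatorname{ch} S((2),\varnothing)=\operatorname{ch} S(\varnothing,(2))$). So the Jucys--Murphy formal character is \emph{not} a faithful invariant on the Grothendieck group in general, and your assertion that the $\operatorname{ch} D(\mu)$ have ``distinct leading JM weights'' needs a real argument. Even if it is true for restricted $\mu$ in this level-two setting, proving it is extra work that the paper's filtration approach avoids entirely. In effect, your route would be: prove Theorem~\ref{sf} (the hard part), then take a detour through an unjustified linear-independence claim, to arrive at a weaker conclusion than Theorem~\ref{sf} already gives for free.

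Your fall-back strategy (Ariki's categorification plus the Leclerc--Miyachi computation of the level-two canonical basis) is correct and is precisely one of the two routes the paper records in its notes; the other, and the one the paper's exposition is built around, is the arc-algebra route via Theorem~\ref{sf}.
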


It is already clear from this that we are in a rather unusual
situation. 
In fact, much more is possible: there is a remarkable extension of
Young's orthogonal form for the algebra $H_d^{p,q}$
giving an explicit construction of
another family of $H_d^{p,q}$-modules
denoted $\{Y(\lambda)\:|\:\lambda \Vdash d\}$. 
As usual, we need some more combinatorial preparation.
For $\lambda \Vdash d$, the {\em Young diagram} of $\lambda$
means the ordered pair of the 
Young diagrams of $\lambda^L$ and $\lambda^R$. 
A {\em $\lambda$-tableau}
$\T = (\T^L, \T^R)$ means a filling of the boxes of this diagram by the numbers $1,\dots,d$ (each appearing
exactly once), and as in the previous section the symmetric group $S_d$ acts
on $\lambda$-tableaux by its action on the entries. 
We let $\Tab(\lambda)$ denote the set of all 
{\em standard} $\lambda$-tableaux, that is, the $\T = (\T^L,\T^R)$ 
such that the entries of both $\T^L$ and $\T^R$ 
increase strictly along rows and down columns.
The {\em residue} of the box in the $b$th row
and $c$th column of the Young diagram of $\lambda^L$ 
(resp.\ $\lambda^R$) is $p + b-c$ (resp. $q+b-c$).
Then the {\em residue sequence} $\bi^\T \in \Z^d$ of $\T \in \Tab(\lambda)$
is defined just like in the previous section.
For example:
$$
\phantom{\T = \Bigg(\quad\:\:
\diagram{2&5&6\cr 3&8\cr}\:\:
,\quad 
\diagram{1&4\cr 7 \cr}
\:\:
\Bigg)}
\begin{picture}(0,26)
\put(-145.9,0.1){$\T = \Bigg(\quad\:\:
\diagram{2&5&6\cr 3&8\cr}\:\:
,\quad 
\diagram{1&4\cr 7 \cr}
\:\:
\Bigg)$}
\put(-111.2,24){$_{p}$}\put(-106.6,19){$_{\smallsetminus}$}
\put(-50.7,24){$_{q}$}\put(-45.9,19){$_{\smallsetminus}$}
\end{picture}
\quad
\leftrightarrow\quad
\bi^\T = (q,p,p-1,q+1,p+1,p+2,q-1,p)
$$
Now for $\lambda \Vdash d$, 
we can define the {\em Young module}
$Y(\lambda)$
to be the vector space on basis
$\big\{v_\T^\lambda\:\big|\:\
\T \in \bigcup_{\mu \supset \lambda} \Tab(\mu)
\big\}.$ To define the action of $H_d^{p,q}$,
take $\T \in \Tab(\mu)$ for some $\mu \supset \lambda$ and
set $\bi := \bi^\T$.
At the end of the section we will define endomorphisms
$y_r$ and $\psi_r$ of the vector space $Y(\lambda)$
such that
\begin{align}\label{yrprop}
y_r v^\lambda_\T &\in \Big\langle \:v^\lambda_\Stab\:\:\Big|\:
\Stab \in {\textstyle\bigcup_{\mu \leq \nu \supset\lambda}}\Tab(\nu), \bi^\Stab =
\bi \Big\rangle,\\
\psi_r v^\lambda_\T &\in \Big\langle\: v^\lambda_\Stab\:\:\Big|\:
\Stab \in {\textstyle\bigcup_{\mu \leq \nu \supset\lambda}} \Tab(\nu), \bi^\Stab = s_r\cdot\bi \Big\rangle.\label{yrprop2}
\end{align}
Moreover 
we will have that $y_r^2 = 0$,
hence it makes sense
to view the power series $p_r(\bi)$ and $q_r(\bi)$ from (\ref{pdef})--(\ref{qdef}) as endomorphisms of $Y(\lambda)$.
Then the generators 
of $H_d^{p,q}$ act by the formulae
\begin{align}\label{f1}
T_r v^\lambda_\T &:=  (\psi_r q_r(\bi) - p_r(\bi)) v^\lambda_\T,\\
L_r v^\lambda_\T &:= 
\left\{
\begin{array}{ll}
(y_r+i_r) v_\T^\lambda&\text{if $\xi = 1$,}\\
\xi^{i_r}(1-y_r) v_\T^\lambda\qquad\qquad&\text{if $\xi \neq 1$,}
\end{array}\right.\label{f2}
\end{align}
which should be compared with (\ref{ysfcomp}) and (\ref{wb}) in the level one case.
Amongst other things, the following theorem justifies the 
terminology ``Young module.''

\begin{Theorem}\label{qh}
The endomorphisms (\ref{f1})--(\ref{f2}) satisfy the defining relations
of $H_d^{p,q}$, so make
$Y := \bigoplus_{\lambda \Vdash d} Y(\lambda)$ into an $H_d^{p,q}$-module.
Let $K_d^{p,q} := \End_{H_d^{p,q}}\left(Y \right)^{\operatorname{op}}$
and $e_\lambda \in K_d^{p,q}$ be the projection of $Y$ onto the summand $Y(\lambda)$.
Then $K_d^{p,q}$
is a basic quasi-hereditary algebra with weight poset
$\{\lambda \Vdash d\}$ partially ordered by $\geq$, and
projective indecomposable modules $P(\lambda) := K_d^{p,q} e_\lambda$,
standard modules $V(\lambda)$ and irreducible modules $L(\lambda)$
for $\lambda \Vdash d$.
Moreover:
\begin{itemize}
\item[(1)]
The left 
$K^{p,q}_d$-module 
$T := \hom_{K^{p,q}_d}(Y, K^{p,q}_d)$
is a projective-injective generator for 
the category
$K^{p,q}_d\operatorname{-mod}$ of finite dimensional left $K^{p,q}_d$-modules,
i.e. it is both projective and injective and every finite dimensional
projective-injective $K^{p,q}_d$-module is isomorphic to a summand of a direct sum of copies of $T$.
\item[(2)]
The following double centralizer property holds:
the natural right action of
$H^{p,q}_d$ on $T$ induces an algebra isomorphism
$H^{p,q}_d \stackrel{\sim}{\rightarrow}
\End_{K^{p,q}_d}(T)^{\op}.$
\item[(3)]
The exact Schur functor $\pi := \hom_{K^{p,q}_d}(T, ?):K^{p,q}_d\operatorname{-mod}
\rightarrow H^{p,q}_d\operatorname{-mod}$
is fully faithful on projective objects.
Hence $K^{p,q}_d$ is a quasi-hereditary cover of $H^{p,q}_d$.
\item[(4)]
For each $\lambda \Vdash d$, 
we have that $\pi P(\lambda) \cong Y(\lambda)$ and 
$\pi V(\lambda)\cong S(\lambda)$.
Moreover if $\lambda$ is restricted then
$\pi L(\lambda) \cong D(\lambda)$,
hence $Y(\lambda)$ is the projective cover of $D(\lambda)$;
if $\lambda$ is not restricted 
then $\pi L(\lambda) = \bz$.
\end{itemize}
\end{Theorem}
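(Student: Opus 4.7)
The plan is to realise the theorem through the Brundan-Kleshchev isomorphism between $H_d^{p,q}$ and the cyclotomic KLR algebra $R_d^{\Lambda_p+\Lambda_q}$ of level two attached to the quiver $A_\infty$, treating the right-hand sides of (\ref{f1})--(\ref{f2}) as the images of $T_r$ and $L_r$ under the inverse of this isomorphism; this is the level-two analogue of the transitions (\ref{ysfcomp}) and (\ref{wb}) explained at the end of Section 2. Concretely, I would introduce idempotents $e(\bi) \in \End_F(Y)$ projecting onto the span of those $v_\T^\lambda$ with $\bi^\T = \bi$. Properties (\ref{yrprop})--(\ref{yrprop2}), combined with the nilpotency $y_r^2 = 0$ built into the construction of $y_r$, express that $y_r$ preserves each $e(\bi)Y$ and is locally nilpotent, while $\psi_r$ sends $e(\bi)Y$ to $e(s_r\cdot \bi)Y$; in particular the power series $p_r(\bi)$ and $q_r(\bi)$ act as well-defined endomorphisms of $Y$. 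A direct combinatorial computation using the cap-diagram encoding of $\supset$ together with the characterisation of addable/removable boxes for bipartitions then verifies that $\{y_r, \psi_r, e(\bi)\}$ satisfies the defining KLR relations of $R_d^{\Lambda_p+\Lambda_q}$; the crux is that $\bigcup_{\mu \supset \lambda}\Tab(\mu)$ is stable under $\T \mapsto s_r\cdot\T$ whenever $s_r\cdot\T$ is standard. The Brundan-Kleshchev isomorphism then implies that (\ref{f1})--(\ref{f2}) satisfy the defining relations of $H_d^{p,q}$, so $Y$ is a well-defined module.

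For the quasi-hereditary structure and part (4), properties (\ref{yrprop})--(\ref{yrprop2}) show that for each $\mu \supset \lambda$ the subspace
\[
F_{\geq \mu} := \Big\langle v_\Stab^\lambda : \Stab \in \Tab(\nu),\ \nu \supset \lambda,\ \nu \geq \mu\Big\rangle
\]
is an $H_d^{p,q}$-submodule of $Y(\lambda)$, giving a filtration indexed by the Bruhat order on $\{\mu : \mu \supset \lambda\}$. On the graded piece $F_{\geq\mu}/F_{>\mu}$, with basis $\{v_\T^\lambda : \T \in \Tab(\mu)\}$, the endomorphism $y_r$ acts nilpotently and in fact as zero (by (\ref{f2}) and the semisimple action of the level-one subalgebra on Specht modules), so (\ref{f1})--(\ref{f2}) collapse to (\ref{ysfcomp}) and (\ref{wb}): the section is isomorphic as an $H_d^{p,q}$-module to the Specht module $S(\mu)$ defined in (\ref{ker}). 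Defining $V(\lambda) := F_{\geq\lambda}/F_{>\lambda} \cong S(\lambda)$, Theorem~\ref{vazt} provides a simple head $L(\lambda)$ with $\pi L(\lambda) = D(\lambda)$ when $\lambda$ is restricted and $\pi L(\lambda) = \bz$ otherwise, while the filtration gives $(Y(\lambda) : V(\mu)) = 1$ iff $\mu \supset \lambda$. Matching this against the composition multiplicities of Theorem~\ref{cm} via BGG reciprocity $(P(\lambda):V(\mu)) = [V(\mu):L(\lambda)]$ verifies the Cline-Parshall-Scott axioms for $K_d^{p,q}$ and yields the identifications in (4).

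The remaining parts are formal. Theorem~\ref{vazt} ensures $\bigoplus_{\text{restricted }\lambda} Y(\lambda)$ is a minimal projective generator of $H_d^{p,q}$; since it sits inside $Y$ as a summand, $Y$ is itself a projective generator, making (2) the tautological identification $H_d^{p,q} = \End_{K_d^{p,q}}(Y)^{\op}$ applied through $T$. Using the standard filtration of each $Y(\lambda)$ from the previous paragraph together with the symmetric Frobenius structure of $H_d^{p,q}$, one identifies $T$ with the direct sum of the indecomposable projective-tilting modules of $K_d^{p,q}$, which settles (1); part (3) is then Rouquier's criterion for $\pi$ to be fully faithful on projectives, once one observes that $T$ contains every indecomposable projective-injective as a summand. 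The main obstacle is the first paragraph: constructing the endomorphisms $y_r, \psi_r$ on the explicit basis and verifying the KLR relations is exactly where the level-two combinatorics of cap diagrams proves its worth, and everything else is then a standard application of the Rouquier cover machinery.
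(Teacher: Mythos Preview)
Your approach is genuinely different from the paper's, and it has a structural gap.

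The paper works in the opposite direction: it first constructs the algebra $K_\Lambda$ independently as Khovanov's arc algebra (Section~4), proves directly from its diagrammatic cellular basis that it is quasi-hereditary with the stated properties (this is \cite{BS1}), then builds the module $T$ as a left $K_\Lambda$-module via special projective functors $F_i$ and proves the double centralizer property \cite[Corollary~8.6]{BS3}.  Only afterwards does it identify $\End_{K_\Lambda}(T)^{\op}$ with $H_d^{p,q}$ and recover the $Y(\lambda)$ as images of the $P(\lambda)$ under the Schur functor; the orthogonal form then drops out of \cite[Lemma~6.6]{BS3} combined with the Brundan--Kleshchev isomorphism.  In short: the paper goes $K\to H$, establishing quasi-heredity on the diagram-algebra side where it is visible from the cellular structure, whereas you are trying to go $H\to K$, reading the quasi-heredity of $K_d^{p,q}$ off the Specht filtrations of the $Y(\lambda)$.

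Your route has a circularity.  In the third paragraph you assert that ``Theorem~\ref{vazt} ensures $\bigoplus_{\text{restricted }\lambda} Y(\lambda)$ is a minimal projective generator of $H_d^{p,q}$''.  But Theorem~\ref{vazt} only classifies the irreducibles; it says nothing about $Y(\lambda)$ being projective.  That $Y(\lambda)$ is the projective cover of $D(\lambda)$ is precisely part~(4) of the theorem you are proving, so you cannot invoke it to get~(2).  Without an independent proof that the $Y(\lambda)$ are projective $H_d^{p,q}$-modules, the double centralizer property is not ``tautological'': knowing merely that $Y$ is some $H_d^{p,q}$-module with a Specht filtration does not force $H_d^{p,q}\cong\End_{K_d^{p,q}}(T)^{\op}$.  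Relatedly, in the second paragraph you write ``Defining $V(\lambda):=F_{\geq\lambda}/F_{>\lambda}\cong S(\lambda)$'', but $F_{\geq\lambda}/F_{>\lambda}$ is an $H_d^{p,q}$-module while $V(\lambda)$ must be a $K_d^{p,q}$-module; the Specht filtration of $Y(\lambda)$ does not by itself define standard modules for $K_d^{p,q}$ or verify the CPS axioms there.  The paper sidesteps all of this by having the quasi-hereditary structure of $K_\Lambda$ in hand \emph{before} the Hecke algebra enters the picture.
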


By 
the general theory of quasi-hereditary algebras,
the projective indecomposable module $P(\lambda)$ in
Theorem~\ref{qh} has a filtration whose sections are standard modules
with $V(\lambda)$ appearing at the top.
Applying the Schur functor $\pi$ from Theorem~\ref{qh}(3), we get a filtration
of the Young module $Y(\lambda)$ whose sections are Specht modules with $S(\lambda)$ at the top.
The next theorem explains how to see this filtration explicitly in terms of the
orthogonal basis; cf. (\ref{yrprop})--(\ref{yrprop2}).

\begin{Theorem}\label{sb}
For any $\lambda \Vdash d$, the Specht module $S(\lambda)$ is isomorphic
to the quotient of $Y(\lambda)$ by the submodule
$\Big\langle \:v_\T^\lambda\:\:\Big|\:\T \in \bigcup_{\lambda \neq \mu \supset \lambda} \Tab(\mu)\Big\rangle.$
Hence $S(\lambda)$ has a distinguished basis 
$\{v_\T \:|\:\T \in \Tab(\lambda)\}$ arising from the images of
the elements $\{v_\T^\lambda\:|\:\Tab(\lambda)\}$,
on which the actions of the generators of $H_d^{p,q}$ can be computed explicitly
via (\ref{f1})--(\ref{f2}).
Moreover if we let $\mu_1,\dots,\mu_n$ be all the 
$\mu \supset \lambda$ ordered so that $\mu_i \geq \mu_j \Rightarrow 
i \leq j$ and set
$M_j := \Big\langle\:v_\T^\lambda\:\:\Big|\:\T\in
\bigcup_{i \leq j}\Tab(\mu_i)\Big\rangle$,
we get a filtration
$$
\bz = M_0 \subset M_1 \subset \cdots \subset M_n = Y(\lambda)
$$
such that the map $M_j / M_{j-1} \rightarrow S(\mu_j)$
sending $v^\lambda_\T + M_{j-1} \mapsto v_\T$ for $\T \in \Tab(\mu_j)$
is an $H_d^{p,q}$-module isomorphism.
\end{Theorem}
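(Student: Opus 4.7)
The plan is to establish the filtration in the last part of the statement; the first two assertions then follow immediately by taking $j = n$, since under the chosen ordering $\mu_n = \lambda$ (it is minimal amongst the $\mu \supset \lambda$ in the Bruhat order).

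First I would verify that each $M_j$ is an $H_d^{p,q}$-submodule of $Y(\lambda)$. By the action formulas (\ref{f1})--(\ref{f2}), combined with the fact that $p_r(\bi)$ and $q_r(\bi)$ act as scalars on each residue-sequence component, it suffices to show that the endomorphisms $y_r$ and $\psi_r$ preserve $M_j$. The containments (\ref{yrprop}) and (\ref{yrprop2}) say that, for $\T \in \Tab(\mu_i)$, both $y_r v_\T^\lambda$ and $\psi_r v_\T^\lambda$ are supported on $v_\Stab^\lambda$ with $\Stab \in \Tab(\nu)$ for some $\nu \geq \mu_i$. The ordering condition $\mu_k \geq \mu_i \Rightarrow k \leq i$ then forces every such $\nu$ to appear amongst $\mu_1,\dots,\mu_i$, hence inside $\{\mu_1,\dots,\mu_j\}$ whenever $i \leq j$; so $y_r$ and $\psi_r$ preserve $M_j$.

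Next I would identify the quotient $M_j/M_{j-1}$ with $S(\mu_j)$. By construction the quotient has basis $\{\bar v_\T^\lambda : \T \in \Tab(\mu_j)\}$, of the correct cardinality $|\Tab(\mu_j)| = \dim S(\mu_j)$. In the quotient the support constraints on $y_r v_\T^\lambda$ and $\psi_r v_\T^\lambda$ become stronger, because any term $v_\Stab^\lambda$ with $\Stab \in \Tab(\nu)$ for $\nu > \mu_j$ lies in $M_{j-1}$ and so vanishes. On the target side, the inductive definition of $S(\mu_j)$ as $\Haff_d \otimes_{\Haff_a \otimes \Haff_b}(\ev_p^*S(\mu_j^L) \boxtimes \ev_q^* S(\mu_j^R))$, combined with Young's orthogonal form (Theorem~\ref{yot}) on each level-one factor and a choice of coset representatives for $S_d/(S_a \times S_b)$, produces a basis $\{v_\T : \T \in \Tab(\mu_j)\}$ of $S(\mu_j)$ on which $T_r$ and $L_r$ act by (\ref{ysfcomp}) and (\ref{wb})---that is, by (\ref{f1})--(\ref{f2}) with all the $y_r$ set equal to zero. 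The candidate map is then $\bar v_\T^\lambda \mapsto v_\T$.

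The main obstacle is to show that this candidate is an $H_d^{p,q}$-module homomorphism, which reduces to showing that $y_r$ really acts as zero on $M_j/M_{j-1}$. The containment (\ref{yrprop}) only guarantees that $y_r \bar v_\T^\lambda$ lies in the span of those $\bar v_\Stab^\lambda$ with $\Stab \in \Tab(\mu_j)$ and $\bi^\Stab = \bi^\T$, not that it vanishes. To close this gap I would combine two pieces of information already available in the excerpt: the nilpotency $y_r^2 = 0$ from Theorem~\ref{qh}, and the semisimplicity of the Jucys-Murphy action on the induced Specht module $S(\mu_j)$, where $L_r$ acts by the scalar $i_r$ (respectively $\xi^{i_r}$) on each tableau basis vector. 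Once the $T_r$-intertwining property has been reduced by the simplifications above to a Young's-orthogonal-form calculation within each residue-sequence block of $\Tab(\mu_j)$, matching eigenvalues of the $L_r$ forces any residual off-diagonal contribution of $y_r$ on the quotient to vanish. This yields the $H_d^{p,q}$-module isomorphism $M_j/M_{j-1} \xrightarrow{\sim} S(\mu_j)$, and hence the filtration claimed in the theorem; specializing to $j = n$ gives the opening quotient description of $S(\lambda)$ and the distinguished basis $\{v_\T : \T \in \Tab(\lambda)\}$.
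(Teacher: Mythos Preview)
Your reduction to showing that each $M_j$ is a submodule is fine, and the argument via (\ref{yrprop})--(\ref{yrprop2}) works (though note $p_r(\bi),q_r(\bi)$ are not scalars but power series in the $y_r$'s; they still preserve $M_j$ because the $y_r$'s do). The genuine gap is in your identification of $M_j/M_{j-1}$ with $S(\mu_j)$.

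You assert that the induced-module construction of $S(\mu_j)$ yields a tableau basis on which $L_r$ acts by the scalar $i_r$ (respectively $\xi^{i_r}$), i.e.\ that (\ref{f1})--(\ref{f2}) hold with all $y_r$ set to zero, and hence that $y_r$ must act as zero on $M_j/M_{j-1}$. This is false in level two. Unlike in level one, distinct standard $\mu_j$-tableaux can have the \emph{same} residue sequence, so the Jucys--Murphy elements do not act semisimply on $S(\mu_j)$. Table~\ref{mytable} already exhibits this: for $\lambda=((1),(21))$ with $p=q=0$ the tableaux $\T_1,\T_2\in\Tab(\lambda)$ satisfy $\bi^{\T_1}=\bi^{\T_2}=(0,0,-1,1)$, and the explicit computation there gives $y_1 v_{\T_2}^\lambda=v_{\T_1}^\lambda$, a nonzero element of the top section $M_n/M_{n-1}$. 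So $y_r$ is genuinely nilpotent but nonzero on the sections, and your ``matching eigenvalues of the $L_r$'' argument cannot force it to vanish. What the theorem is really asserting is that the nontrivial action of $y_r,\psi_r$ on each section, as described diagrammatically at the end of the section, \emph{is} the Specht module; this is not something you can read off from the level-one orthogonal form plus coset representatives.

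The paper does not argue directly at all. Its proof (see the Notes at the end of the section) is to invoke \cite[Theorem~5.1]{BS1}, which gives the standard filtration of the projective indecomposable $P(\lambda)$ over the quasi-hereditary algebra $K_d^{p,q}$, and then apply the exact Schur functor $\pi$ of Theorem~\ref{qh}(3)--(4), which sends $P(\lambda)\mapsto Y(\lambda)$ and $V(\mu)\mapsto S(\mu)$. The identification of sections with Specht modules is thus imported from the quasi-hereditary side rather than verified by a direct basis comparison.
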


The basis $\{v_\T\:|\:\T \in \Tab(\lambda)\}$ for $S(\lambda)$
arising from Theorem~\ref{sb} is very special.
For example if $\lambda$ is restricted, it contains a basis for the
kernel of the homomorphism $S(\lambda) \twoheadrightarrow
D(\lambda)$,
so that we also get induced an equally explicit basis for the irreducible module $D(\lambda)$. In order to explain this precisely, and also for use when we
define the endomorphisms
$y_r,\psi_r \in \End_{F}(Y(\lambda))$ at the end of the section, we need one more combinatorial excursion.

Suppose we are given a standard $\lambda$-tableau
$\T$ for some $\lambda \Vdash d$.
We are going to represent $\T$ by a new sort of diagram which we call
a {\em stretched $\lambda$-cup diagram}.
To make the translation, let $\bi := \bi^\T$ and $\varnothing = \lambda_0,\lambda_1,\dots,\lambda_{d-1},\lambda_d = \lambda$
be the sequence of bipartitions such that $\lambda_i$ is the shape of the
standard tableau obtained from $\T$ by removing all the boxes containing the entries $\geq (i+1)$.
Stack the weight diagrams of the bipartitions 
$\lambda_0,\lambda_1,\dots,\lambda_d$
in order from bottom to top, and observe
that the weight diagrams 
$\lambda_{r-1}$ and $\lambda_r$ only differ at vertices $i_r$ and $i_{r}+1$.
For each $r=1,\dots,d$, insert vertical line segments connecting
all vertices $< i_r$ or $> (i_r+1)$ that are labelled $\down$ or $\up$ in
$\lambda_{r-1}$ and $\lambda_r$. Then
connect the remaining vertices $i_r$ and $i_r+1$
of $\lambda_{r-1}$ and $\lambda_r$ as in the appropriate one of the following pictures:
$$
\begin{picture}(75,78)
\put(6,72){\line(1,0){33}}
\put(6,48){\line(1,0){33}}
\put(22.5,72){\oval(23,23)[b]}
\put(7.7,46.1){$\cross$}
\put(31.3,45.4){$\circ$}
\put(8.2,72.3){{$\down$}}
\put(31.2,67.3){{$\up$}}

\put(6,25){\line(1,0){33}}
\put(6,1){\line(1,0){33}}
\put(22.5,25){\oval(23,23)[b]}
\put(7.7,-.9){$\cross$}
\put(31.3,-1.6){$\circ$}
\put(31.2,25.3){{$\down$}}
\put(8.2,20.3){{$\up$}}
\end{picture}
\begin{picture}(75,78)
\put(6,72){\line(1,0){33}}
\put(6,48){\line(1,0){33}}
\put(22.5,48){\oval(23,23)[t]}
\put(30.7,70.1){$\cross$}
\put(8.3,69.4){$\circ$}
\put(8.2,48.3){{$\down$}}
\put(31.2,43.3){{$\up$}}

\put(6,25){\line(1,0){33}}
\put(6,1){\line(1,0){33}}
\put(22.5,1){\oval(23,23)[t]}
\put(30.7,23.1){$\cross$}
\put(8.3,22.4){$\circ$}
\put(31.2,1.3){{$\down$}}
\put(8.2,-3.7){{$\up$}}
\end{picture}
\begin{picture}(75,78)
\put(6,72){\line(1,0){33}}
\put(6,48){\line(1,0){33}}
\qbezier(22.5,60)(11,62)(11,72)
\qbezier(34,48)(34,58)(22.5,60)
\put(30.7,70.1){$\cross$}
\put(7.7,46.1){$\cross$}
\put(8.2,72.3){{$\down$}}
\put(31.2,48.3){{$\down$}}

\put(6,25){\line(1,0){33}}
\put(6,1){\line(1,0){33}}
\put(30.7,23.1){$\cross$}
\put(7.7,-.9){$\cross$}
\qbezier(22.5,13)(11,15)(11,25)
\qbezier(34,1)(34,11)(22.5,13)
\put(31.2,-3.7){{$\up$}}
\put(8.2,20.3){{$\up$}}
\end{picture}
\begin{picture}(45,78)
\put(6,72){\line(1,0){33}}
\put(6,48){\line(1,0){33}}
\qbezier(22.5,60)(11,58)(11,48)
\qbezier(34,72)(34,62)(22.5,60)
\put(8.3,69.4){$\circ$}
\put(31.3,45.4){$\circ$}
\put(8.2,48.3){{$\down$}}
\put(31.2,72.3){{$\down$}}

\put(6,25){\line(1,0){33}}
\put(6,1){\line(1,0){33}}
\put(8.3,22.4){$\circ$}
\put(31.3,-1.6){$\circ$}
\qbezier(22.5,13)(11,11)(11,1)
\qbezier(34,25)(34,15)(22.5,13)
\put(31.2,20.3){{$\up$}}
\put(8.2,-3.7){{$\up$}}
\end{picture}
$$
See Table~\ref{mytable} for some examples.
\begin{table}
$$
\begin{array}{|c|c|c|c|}
\hline
\T_1&\T_2&\T_3&\T_4\\&&&\\
\left(\begin{array}{l}\diagram{1\cr}\\\vspace{0.6mm}\\\end{array},
\begin{array}{l}
\diagram{2&4\cr3\cr}\\
\end{array}
\right)
&
\left(\begin{array}{l}\diagram{2\cr}\\\vspace{0.6mm}\\\end{array},
\begin{array}{l}
\diagram{1&4\cr3\cr}\\
\end{array}
\right)
&
\left(\begin{array}{l}\diagram{3\cr}\\\vspace{0.6mm}\\\end{array},
\begin{array}{l}
\diagram{1&4\cr2\cr}\\
\end{array}
\right)
&
\left(\begin{array}{l}\diagram{4\cr}\\\vspace{0.6mm}\\\end{array},
\begin{array}{l}
\diagram{1&3\cr2\cr}\\
\end{array}
\right)\\&&&\\
\begin{picture}(75,90)
\put(.3,140.3){$_{0}$}\put(4.6,136){$_{\smallsetminus}$}
\put(28,140.3){$_{0}$}\put(32.3,136){$_{\smallsetminus}$}
\put(0,-7){$_{-1}$}
\put(25.4,-7){$_{0}$}
\put(45.6,-7){$_{1}$}
\put(65.8,-7){$_{2}$}
\put(3,82){\line(1,0){69}}
\put(3,62){\line(1,0){69}}
\put(3,42){\line(1,0){69}}
\put(3,22){\line(1,0){69}}
\put(3,2){\line(1,0){69}}

\put(5,82.1){$\down$}
\put(45,82.1){$\down$}
\put(25,77.3){$\up$}
\put(65,77.3){$\up$}

\put(5,62.1){$\down$}
\put(25,57.3){$\up$}
\put(65.1,59.3){$\circ$}
\put(44.5,60.1){$\cross$}

\put(25.1,39.3){$\circ$}
\put(65.1,39.3){$\circ$}
\put(4.5,40.1){$\cross$}
\put(44.5,40.1){$\cross$}

\put(45,22.1){$\down$}
\put(25,17.7){$\up$}
\put(65.1,19.3){$\circ$}
\put(4.5,20.1){$\cross$}

\put(45.1,-.7){$\circ$}
\put(65.1,-.7){$\circ$}
\put(4.5,.1){$\cross$}
\put(24.5,.1){$\cross$}

\put(7.8,82){\line(0,-1){20}}
\put(27.8,82){\line(0,-1){20}}
\put(57.9,82){\oval(20,20)[b]}
\put(17.9,62){\oval(20,20)[b]}
\put(37.9,22){\oval(20,20)[t]}
\put(37.9,22){\oval(20,20)[b]}
\end{picture}
&
\begin{picture}(75,90)
\put(.3,140.3){$_{0}$}\put(4.6,136){$_{\smallsetminus}$}
\put(28,140.3){$_{0}$}\put(32.3,136){$_{\smallsetminus}$}
\put(0,-7){$_{-1}$}
\put(25.4,-7){$_{0}$}
\put(45.6,-7){$_{1}$}
\put(65.8,-7){$_{2}$}
\put(3,82){\line(1,0){69}}
\put(3,62){\line(1,0){69}}
\put(3,42){\line(1,0){69}}
\put(3,22){\line(1,0){69}}
\put(3,2){\line(1,0){69}}

\put(5,82.1){$\down$}
\put(45,82.1){$\down$}
\put(25,77.3){$\up$}
\put(65,77.3){$\up$}

\put(5,62.1){$\down$}
\put(25,57.3){$\up$}
\put(65.1,59.3){$\circ$}
\put(44.5,60.1){$\cross$}

\put(25.1,39.3){$\circ$}
\put(65.1,39.3){$\circ$}
\put(4.5,40.1){$\cross$}
\put(44.5,40.1){$\cross$}

\put(25,22.1){$\down$}
\put(45,17.3){$\up$}
\put(65.1,19.3){$\circ$}
\put(4.5,19.1){$\cross$}

\put(45.1,-0.7){$\circ$}
\put(65.1,-0.7){$\circ$}
\put(4.5,.1){$\cross$}
\put(24.5,.1){$\cross$}

\put(7.8,82){\line(0,-1){20}}
\put(27.8,82){\line(0,-1){20}}
\put(57.9,82){\oval(20,20)[b]}
\put(17.9,62){\oval(20,20)[b]}
\put(37.9,22){\oval(20,20)[t]}
\put(37.9,22){\oval(20,20)[b]}
\end{picture}
&
\begin{picture}(75,90)
\put(.3,140.3){$_{0}$}\put(4.6,136){$_{\smallsetminus}$}
\put(28,140.3){$_{0}$}\put(32.3,136){$_{\smallsetminus}$}
\put(0,-7){$_{-1}$}
\put(25.4,-7){$_{0}$}
\put(45.6,-7){$_{1}$}
\put(65.8,-7){$_{2}$}
\put(3,82){\line(1,0){69}}
\put(3,62){\line(1,0){69}}
\put(3,42){\line(1,0){69}}
\put(3,22){\line(1,0){69}}
\put(3,2){\line(1,0){69}}

\put(5,82.1){$\down$}
\put(45,82.1){$\down$}
\put(25,77.3){$\up$}
\put(65,77.3){$\up$}

\put(5,62.1){$\down$}
\put(25,57.3){$\up$}
\put(65.1,59.3){$\circ$}
\put(44.5,60.1){$\cross$}

\put(5,42.1){$\down$}
\put(45,37.3){$\up$}
\put(65.1,39.3){$\circ$}
\put(24.5,40.1){$\cross$}

\put(25,22.1){$\down$}
\put(45,17.3){$\up$}
\put(65.1,19.3){$\circ$}
\put(4.5,20.1){$\cross$}

\put(45.1,-0.7){$\circ$}
\put(65.1,-0.7){$\circ$}
\put(4.5,.1){$\cross$}
\put(24.5,.1){$\cross$}

\put(7.8,82){\line(0,-1){40}}
\put(27.8,82){\line(0,-1){20}}
\put(47.8,42){\line(0,-1){20}}
\put(57.9,82){\oval(20,20)[b]}
\put(37.9,22){\oval(20,20)[b]}
\qbezier(7.8,42)(7.8,34)(17.8,32)
\qbezier(17.8,32)(27.8,30)(27.8,22)
\qbezier(27.8,62)(27.8,54)(37.8,52)
\qbezier(37.8,52)(47.8,50)(47.8,42)
\end{picture}
&
\begin{picture}(75,90)
\put(.3,140.3){$_{0}$}\put(4.6,136){$_{\smallsetminus}$}
\put(28,140.3){$_{0}$}\put(32.3,136){$_{\smallsetminus}$}
\put(0,-7){$_{-1}$}
\put(25.4,-7){$_{0}$}
\put(45.6,-7){$_{1}$}
\put(65.8,-7){$_{2}$}
\put(3,82){\line(1,0){69}}
\put(3,62){\line(1,0){69}}
\put(3,42){\line(1,0){69}}
\put(3,22){\line(1,0){69}}
\put(3,2){\line(1,0){69}}

\put(5,82.1){$\down$}
\put(45,82.1){$\down$}
\put(25,77.3){$\up$}
\put(65,77.3){$\up$}

\put(5,62.1){$\down$}
\put(65,57.3){$\up$}
\put(45.1,59.3){$\circ$}
\put(24.5,60.1){$\cross$}

\put(5,42.1){$\down$}
\put(45,37.3){$\up$}
\put(65.1,39.3){$\circ$}
\put(24.5,40.1){$\cross$}

\put(25,22.1){$\down$}
\put(45,17.7){$\up$}
\put(65.1,19.3){$\circ$}
\put(4.5,20.1){$\cross$}

\put(45.1,-0.7){$\circ$}
\put(65.1,-0.7){$\circ$}
\put(4.5,.1){$\cross$}
\put(24.5,.1){$\cross$}

\put(7.8,82){\line(0,-1){40}}
\put(67.8,82){\line(0,-1){20}}
\put(47.8,42){\line(0,-1){20}}
\put(37.9,82){\oval(20,20)[b]}
\put(37.9,22){\oval(20,20)[b]}
\qbezier(7.8,42)(7.8,34)(17.8,32)
\qbezier(17.8,32)(27.8,30)(27.8,22)
\qbezier(67.8,62)(67.8,54)(57.8,52)
\qbezier(57.8,52)(47.8,50)(47.8,42)
\end{picture}\\&&&\\
\hline\hline
\T_5&\T_6&\T_7&\T_8\\&&&\\
\left(\begin{array}{l}\diagram{1\cr}\\\vspace{0.6mm}\\\end{array},
\begin{array}{l}
\diagram{2&3\cr4\cr}\\
\end{array}
\right)
&
\left(\begin{array}{l}\diagram{2\cr}\\\vspace{0.6mm}\\\end{array},
\begin{array}{l}
\diagram{1&3\cr4\cr}\\
\end{array}
\right)
&
\left(\begin{array}{l}\diagram{3\cr}\\\vspace{0.6mm}\\\end{array},
\begin{array}{l}
\diagram{1&2\cr4\cr}\\
\end{array}
\right)
&
\left(\begin{array}{l}\diagram{4\cr}\\\vspace{0.6mm}\\\end{array},
\begin{array}{l}
\diagram{1&2\cr3\cr}\\
\end{array}
\right)\\&&&\\
\begin{picture}(75,90)
\put(0.3,140.3){$_{0}$}\put(4.6,136){$_{\smallsetminus}$}
\put(28,140.3){$_{0}$}\put(32.3,136){$_{\smallsetminus}$}
\put(0,-7){$_{-1}$}
\put(25.4,-7){$_{0}$}
\put(45.6,-7){$_{1}$}
\put(65.8,-7){$_{2}$}
\put(3,82){\line(1,0){69}}
\put(3,62){\line(1,0){69}}
\put(3,42){\line(1,0){69}}
\put(3,22){\line(1,0){69}}
\put(3,2){\line(1,0){69}}

\put(5,82.1){$\down$}
\put(45,82.1){$\down$}
\put(25,77.3){$\up$}
\put(65,77.3){$\up$}

\put(45,62.1){$\down$}
\put(65,57.3){$\up$}
\put(25.1,59.3){$\circ$}
\put(4.5,60.1){$\cross$}

\put(25.1,39.3){$\circ$}
\put(65.1,39.3){$\circ$}
\put(4.5,40.1){$\cross$}
\put(44.5,40.1){$\cross$}

\put(45,22.1){$\down$}
\put(25,17.7){$\up$}
\put(65.1,19.3){$\circ$}
\put(4.5,20.1){$\cross$}

\put(45.1,-.7){$\circ$}
\put(65.1,-.7){$\circ$}
\put(4.5,.1){$\cross$}
\put(24.5,.1){$\cross$}

\put(47.8,82){\line(0,-1){20}}
\put(67.8,82){\line(0,-1){20}}
\put(57.9,62){\oval(20,20)[b]}
\put(17.9,82){\oval(20,20)[b]}
\put(37.9,22){\oval(20,20)[t]}
\put(37.9,22){\oval(20,20)[b]}
\end{picture}
&
\begin{picture}(75,90)
\put(.3,140.3){$_{0}$}\put(4.6,136){$_{\smallsetminus}$}
\put(28,140.3){$_{0}$}\put(32.3,136){$_{\smallsetminus}$}
\put(0,-7){$_{-1}$}
\put(25.4,-7){$_{0}$}
\put(45.6,-7){$_{1}$}
\put(65.8,-7){$_{2}$}
\put(3,82){\line(1,0){69}}
\put(3,62){\line(1,0){69}}
\put(3,42){\line(1,0){69}}
\put(3,22){\line(1,0){69}}
\put(3,2){\line(1,0){69}}

\put(5,82.1){$\down$}
\put(45,82.1){$\down$}
\put(25,77.3){$\up$}
\put(65,77.3){$\up$}

\put(45,62.1){$\down$}
\put(65,57.3){$\up$}
\put(25.1,59.3){$\circ$}
\put(4.5,60.1){$\cross$}

\put(25.1,39.3){$\circ$}
\put(65.1,39.3){$\circ$}
\put(4.5,40.1){$\cross$}
\put(44.5,40.1){$\cross$}

\put(25,22.1){$\down$}
\put(45,17.3){$\up$}
\put(65.1,19.3){$\circ$}
\put(4.5,19.1){$\cross$}

\put(45.1,-0.7){$\circ$}
\put(65.1,-0.7){$\circ$}
\put(4.5,.1){$\cross$}
\put(24.5,.1){$\cross$}

\put(47.8,82){\line(0,-1){20}}
\put(67.8,82){\line(0,-1){20}}
\put(57.9,62){\oval(20,20)[b]}
\put(17.9,82){\oval(20,20)[b]}
\put(37.9,22){\oval(20,20)[t]}
\put(37.9,22){\oval(20,20)[b]}
\end{picture}
&
\begin{picture}(75,90)
\put(.3,140.3){$_{0}$}\put(4.6,136){$_{\smallsetminus}$}
\put(28,140.3){$_{0}$}\put(32.3,136){$_{\smallsetminus}$}
\put(0,-7){$_{-1}$}
\put(25.4,-7){$_{0}$}
\put(45.6,-7){$_{1}$}
\put(65.8,-7){$_{2}$}
\put(3,82){\line(1,0){69}}
\put(3,62){\line(1,0){69}}
\put(3,42){\line(1,0){69}}
\put(3,22){\line(1,0){69}}
\put(3,2){\line(1,0){69}}

\put(5,82.1){$\down$}
\put(45,82.1){$\down$}
\put(25,77.3){$\up$}
\put(65,77.3){$\up$}

\put(45,62.1){$\down$}
\put(65,57.3){$\up$}
\put(25.1,59.3){$\circ$}
\put(4.5,60.1){$\cross$}

\put(25,42.1){$\down$}
\put(65,37.3){$\up$}
\put(45.1,39.3){$\circ$}
\put(4.5,40.1){$\cross$}

\put(25,22.1){$\down$}
\put(45,17.3){$\up$}
\put(65.1,19.3){$\circ$}
\put(4.5,20.1){$\cross$}

\put(45.1,-0.7){$\circ$}
\put(65.1,-0.7){$\circ$}
\put(4.5,.1){$\cross$}
\put(24.5,.1){$\cross$}

\put(67.8,82){\line(0,-1){40}}
\put(47.8,82){\line(0,-1){20}}
\put(27.8,42){\line(0,-1){20}}
\put(17.9,82){\oval(20,20)[b]}
\put(37.9,22){\oval(20,20)[b]}

\qbezier(47.8,62)(47.8,54)(37.8,52)
\qbezier(37.8,52)(27.8,50)(27.8,42)

\qbezier(67.8,42)(67.8,34)(57.8,32)
\qbezier(57.8,32)(47.8,30)(47.8,22)
\end{picture}
&
\begin{picture}(75,90)
\put(.3,140.3){$_{0}$}\put(4.6,136){$_{\smallsetminus}$}
\put(28,140.3){$_{0}$}\put(32.3,136){$_{\smallsetminus}$}
\put(0,-7){$_{-1}$}
\put(25.4,-7){$_{0}$}
\put(45.6,-7){$_{1}$}
\put(65.8,-7){$_{2}$}
\put(3,82){\line(1,0){69}}
\put(3,62){\line(1,0){69}}
\put(3,42){\line(1,0){69}}
\put(3,22){\line(1,0){69}}
\put(3,2){\line(1,0){69}}

\put(5,82.1){$\down$}
\put(45,82.1){$\down$}
\put(25,77.3){$\up$}
\put(65,77.3){$\up$}

\put(5,62.1){$\down$}
\put(65,57.3){$\up$}
\put(45.1,59.3){$\circ$}
\put(24.5,60.1){$\cross$}

\put(25,42.1){$\down$}
\put(65,37.3){$\up$}
\put(45.1,39.3){$\circ$}
\put(4.5,40.1){$\cross$}

\put(25,22.1){$\down$}
\put(45,17.7){$\up$}
\put(65.1,19.3){$\circ$}
\put(4.5,20.1){$\cross$}

\put(45.1,-0.7){$\circ$}
\put(65.1,-0.7){$\circ$}
\put(4.5,.1){$\cross$}
\put(24.5,.1){$\cross$}

\put(7.8,82){\line(0,-1){20}}
\put(27.8,42){\line(0,-1){20}}
\put(67.8,82){\line(0,-1){40}}
\put(37.9,82){\oval(20,20)[b]}
\put(37.9,22){\oval(20,20)[b]}
\qbezier(7.8,62)(7.8,54)(17.8,52)
\qbezier(17.8,52)(27.8,50)(27.8,42)
\qbezier(67.8,42)(67.8,34)(57.8,32)
\qbezier(57.8,32)(47.8,30)(47.8,22)
\end{picture}\\&&&\\
\hline
\end{array}
$$
\caption{
For
$\la = ((1),(21))$ and $p=q=0$,
this table displays the stretched $\lambda$-cup diagrams
corresponding to the eight standard $\la$-tableaux, which are denoted
$\T_1,\dots,\T_8$.}
\label{mytable}
\end{table}

Ignoring the weight diagrams themselves, the stretched cup diagram 
of any $\T \in \Tab(\lambda)$
decomposes into various connected components: {\em circles} 
in the interior of the diagram, {\em boundary cups} 
whose endpoints are vertices on the top number line, and 
{\em line segments} which stretch between the bottom and top number lines.
The top weight diagram $\lambda$ gives an orientation to each 
of the boundary cups, either counter-clockwise or clockwise.
We define the {\em weight} $\wt(\T)$ 
to be the bipartition whose weight diagram is obtained from 
$\lambda$ by switching 
the labels at the ends of each of the clockwise boundary cups.
Also for $i \geq 0$
let $\Tab_i(\lambda)$ denote the set of all $\T \in \Tab(\lambda)$
such that the corresponding stretched cup diagram has exactly $i$
clockwise boundary cups.
In particular,
\begin{equation}
\Tab_0(\lambda) = \{\T \in \Tab(\lambda)\:|\:\wt(\T) = \lambda\},
\end{equation}
which is non-empty if and only if $\lambda$ is restricted.

\begin{Theorem}\label{sf}
Given a restricted $\lambda \Vdash d$, 
the irreducible module $D(\lambda)$ is isomorphic to the quotient
of $S(\lambda)$ by the submodule
$\big\langle \,v_\T\:\big|\:\T \in \bigcup_{i \geq 1} \Tab_i(\lambda)\,\big\rangle$.
Hence $D(\lambda)$ has a distinguished basis $\{\bar{v}_\T\:|\:\T \in \Tab_0(\lambda)\}$ arising from the images of the elements $\{v_\T\:|\:\T \in \Tab_0(\lambda)\}$, on which the actions of the generators of $H_d^{p,q}$
can be computed explicitly via (\ref{f1})--(\ref{f2}).
Moreover given an arbitrary $\lambda \Vdash d$,
let $N_j := \langle \, v_\T\:\big|\:\T \in \bigcup_{i \geq j} \Tab_i(\lambda)\,\big\rangle$.
Then
$$
S(\lambda)  = N_0 \supseteq N_1 \supseteq \cdots
$$
is a filtration of $S(\lambda)$ 
such that 
$N_j / N_{j+1} \cong \bigoplus_\mu D(\mu)$,
direct sum over all $\mu \Vdash d$ such that
there is a $\lambda$-cap 
diagram of weight $\mu$ with exactly $j$ clockwise caps;
the explicit 
isomorphism here sends $v_\T + N_{j+1} \in N_j / N_{j+1}$ for $\T \in \Tab_j(\lambda)$
to $\bar v_{\bar \T} \in D(\wt(\T))$ where 
$\bar\T$ is the standard tableau whose stretched cup diagram
is obtained from that of $\T$ by reversing the labels on all clockwise 
boundary cups.
\end{Theorem}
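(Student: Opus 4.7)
The plan is to reduce the theorem to two assertions: (i) each $N_j$ is an $H_d^{p,q}$-submodule of $S(\lambda)$, and (ii) the prescribed map $v_\T + N_{j+1} \mapsto \bar v_{\bar\T}$ is a well-defined isomorphism $N_j/N_{j+1} \iso \bigoplus_\mu D(\mu)$. Granting both, the first assertion of the theorem follows by taking $\lambda$ restricted and $j=0$: the direct sum on the right is indexed by weights $\mu$ admitting a $\lambda$-cap diagram with no clockwise caps, and for restricted $\lambda$ this forces $\mu = \lambda$ (by uniqueness of the $\lambda$-cap diagram of weight $\lambda$ combined with the definition of restrictedness), so $S(\lambda)/N_1 \cong D(\lambda)$. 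I would proceed by induction on the Bruhat order on bipartitions, establishing (i) and (ii) for $\lambda$ while assuming the first assertion for all restricted $\mu < \lambda$.

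For (i), transport the generator action from $Y(\lambda)$ via Theorem \ref{sb}. The Jucys-Murphy operators act diagonally on $\{v_\T\}$ by (\ref{f2}), so each $N_j$ is stable under them. For $T_r$, by (\ref{f1}) it suffices to check $\psi_r v_\T \in N_j$ whenever $\T \in \Tab_j(\lambda)$, and by (\ref{yrprop2}) this reduces to the combinatorial claim: any $\Stab \in \Tab(\lambda)$ with $\bi^\Stab = s_r \cdot \bi^\T$ satisfies $\Stab \in \Tab_i(\lambda)$ for some $i \geq j$. The swap of entries $r, r+1$ affects the stretched cup diagram only at the three consecutive levels $r-1, r, r+1$, and one verifies by case analysis on the four local connection types (pictured just before Table \ref{mytable}) that the resulting surgery preserves or increases the total number of clockwise boundary cups.

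For (ii), reversing all clockwise boundary cups in the stretched cup diagram of $\T \in \Tab_j(\lambda)$ yields a standard tableau $\bar\T \in \Tab_0(\mu)$ with $\mu := \wt(\T)$; this sets up a bijection $\Tab_j(\lambda) \leftrightarrow \bigsqcup_\mu \Tab_0(\mu)$ as $\mu$ runs over weights admitting a $\lambda$-cap diagram with exactly $j$ clockwise caps (all such $\mu$ being restricted by construction). By the inductive hypothesis, $\Tab_0(\mu)$ indexes the distinguished basis of $D(\mu)$ from the first assertion. The map is $H_d^{p,q}$-equivariant because reversing boundary cups alters only the labels on the top number line, leaving $\bi^{\bar\T} = \bi^\T$ (residues are read off at intermediate levels) and commuting with the level-$r$ surgery from (i) modulo terms of strictly higher clockwise-cap count, which vanish in $N_j/N_{j+1}$. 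The main obstacle is the local case analysis underpinning (i): one must track how a local move at level $r$ of the stretched cup diagram propagates along boundary cups that may traverse many intermediate levels, confirming globally that no clockwise boundary cup can be destroyed. This step is notation-heavy but conceptually routine; once in place, it feeds directly into the compatibility check in (ii).
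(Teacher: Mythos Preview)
Your strategy differs from the paper's: the paper does not argue directly on the orthogonal form at all, but deduces the theorem from the cell-filtration result \cite[Theorem~5.2]{BS1} for standard modules over the arc algebra $K_\Lambda$, transported to $H_d^{p,q}$ via the Schur functor of Theorem~\ref{qh}. So the combinatorics you are attempting is absorbed there into the cellular algebra machinery.

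Your direct approach has a genuine gap at step (i). First a small slip: by (\ref{f2}) the element $L_r$ is \emph{not} diagonal on the basis $\{v_\T\}$ of $S(\lambda)$, since $y_r$ can send $v_\T$ to $\pm v_\Stab$ whenever the component through layer $r$ is an internal counter-clockwise circle; this is easily repaired because reversing an internal circle does not touch the top number line, hence preserves the clockwise boundary-cup count. The real problem is your reduction via (\ref{yrprop2}). You assert that it suffices to show that \emph{every} $\Stab\in\Tab(\lambda)$ with $\bi^\Stab=s_r\cdot\bi^\T$ lies in $\bigcup_{i\geq j}\Tab_i(\lambda)$. This combinatorial claim is false. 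In the running example of Table~\ref{mytable} one has $\T_4\in\Tab_1(\lambda)$ with $\bi^{\T_4}=(0,-1,1,0)$, while $\T_3\in\Tab_0(\lambda)$ with $\bi^{\T_3}=(0,-1,0,1)=s_3\cdot\bi^{\T_4}$; so the proposed case analysis ``preserves or increases the total number of clockwise boundary cups'' cannot succeed. (By symmetry your claim would force the count to be \emph{constant} along $s_r$-moves of residue sequences, hence constant on all of $\Tab(\lambda)$, which is absurd.)

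The point is that (\ref{yrprop2}) only gives containment in a span; the specific $\Stab$'s that actually occur in $\psi_r v_\T$ are governed by the surgery rules (\ref{rule1})--(\ref{rule2}) applied inside $\Tstacked{\lambda}$, i.e.\ with the $\lambda$-cap diagram of weight $\lambda$ glued on top. In the example above one checks that both terms produced by $1\mapsto 1\otimes x+x\otimes 1$ acquire a clockwise cap in the cap diagram, hence change the top weight and die in $S(\lambda)$, so $\psi_3 v_{\T_4}=0$ there and $N_1$ is preserved after all. To salvage your approach you must work with these rules directly, tracking how surgery interacts with the cap diagram rather than with the stretched cup diagram alone; the coarse bound (\ref{yrprop2}) is not enough. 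This also undermines the equivariance check in (ii), which you justify ``modulo terms of strictly higher clockwise-cap count'' by appealing to the same false monotonicity.
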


It just remains to explain the definitions of 
$y_r, \psi_r \in \End_F(Y(\lambda))$.
Continue with $\lambda \Vdash d$.
Fix $\T \in \Tab(\mu)$ for some $\mu \supset \lambda$,
hence a basis vector $v_\T^\lambda \in Y(\lambda)$.
Let $\bi := \bi^\T$. 
To start with we take care of some awkward signs:
we will actually define 
$\bar y_r, \bar \psi_r \in \End_F(Y(\lambda))$,
and then $y_r$ and $\psi_r$ are related to these by the formulae
\begin{equation}
y_r v_\T^\lambda = \sigma_r(\bi) \bar y_r v^\lambda_\T,
\qquad
\psi_r v^\lambda_\T = \left\{
\begin{array}{ll}
- \sigma_r(\bi) \bar\psi_r v^\lambda_\T&\text{if }i_{r+1} \in \{i_r,i_r+1\},\\
\bar\psi_r v_\T&\text{otherwise,}
\end{array}
\right.
\end{equation}
where
$\sigma_r(\bi) := (-1)^{\min(p,i_r)+\min(q,i_r)+\delta_{i_1,i_r}+\cdots+\delta_{i_{r-1},i_r}}$.

To calculate $\bar y_r v_\T^\lambda$, 
let 
$\Tstacked{\lambda}$ denote the composite diagram obtained
by gluing the stretched $\mu$-cup diagram corresponding to $\T$
under the unique $\mu$-cap diagram of weight $\lambda$.
We refer to the horizontal strips between the number lines 
in this diagram as its {\em layers}, and index them
by $1,\dots,d$ in order from bottom to top.
There is a unique connected component in the diagram
$\Tstacked{\lambda}$
which is non-trivial in the $r$th layer, i.e. its intersection with
the $r$th layer involves something other than
vertical line segments.
If this connected component is a counter-clockwise circle, we
reverse all the labels
$\down$ or $\up$
on the component to get
a new diagram of the form $\Sstacked{\lambda}$ for a unique
standard tableau $\Stab$, then set $\bar y_r v_\T^\lambda := v_\Stab^\lambda$; otherwise we simply set $\bar y_r v_\T^\lambda := 0$.
For example, in the notation of Table~\ref{mytable}
taking $\lambda = ((1),(21))$, we have
$y_1 v_{\T_2}^\lambda = v_{\T_1}^\lambda$,
and $y_4 v_{\T_4}^\lambda = -v_{\Stab}^\lambda$
where 
$\Stab = (\T_4^R,\T_4^L)$.

To calculate $\bar\psi_r v_\T^\lambda$, there are three cases. The easiest is when $|i_r-i_{r+1}| > 1$,
when $s_r \cdot \T$ is again a standard tableau as in the level one
case and we set
\begin{equation}
\psi_r v_\T^\lambda = \bar \psi_r v_\T^\lambda :=
v_{s_r \cdot \T}^\lambda. 
\end{equation}
In terms of $\Tstacked{\lambda}$,
this corresponds to sliding the parts of the diagram that are non-trivial in
layers $r$ and $(r+1)$ past each other. For example in 
the notation from 
Table~\ref{mytable} again
we have that
$\psi_2 v_{\T_4}^\lambda = v_{\T_8}^\lambda$ 
and
$\psi_3 v_{\T_2}^\lambda = v_{\T_6}^\lambda$.

Next suppose that $i_r = i_{r+1}$. Then the diagram
$\Tstacked{\lambda}$ has a small circle in layers $r$ and $r+1$. If
this circle is counter-clockwise we set $\bar\psi_r v^\lambda_\T :=
0$;
otherwise the circle is clockwise and we let $\bar\psi_r v^\lambda_\T := v^\lambda_\Stab$ where
$\Sstacked{\lambda}$ is obtained from $\Tstacked{\lambda}$
by reversing the labels on this circle.
For example $\psi_1
v_{\T_1}^\lambda = -v_{\T_2}^\lambda$ and $\psi_1 v_{\T_2}^\lambda
= 0$.

Finally suppose that $|i_r-i_{r+1}| = 1$.
If there are no standard tableaux with
residue sequence $s_r \cdot \bi$, we simply set $\bar\psi_r
v_\T^\lambda := 0$. If there is at least one such standard tableau,
the part of the diagram
$\Tstacked{\lambda}$
that is non-trivial in layers $r$ and $(r+1)$ matches one of the
following eight configurations:
$$
\begin{picture}(10,47)
\put(-164,20){$i_{r+1}=i_r-1:$}

\put(-82,42){\line(1,0){48}}
\put(-82,2){\line(1,0){48}}
\put(-78,22){\line(0,-1){20}}
\put(-48,2){\oval(20,25)[t]}
\qbezier(-58,42)(-58,34)(-68,32)
\qbezier(-68,32)(-78,30)(-78,22)
\put(-60,-15){$\updownarrow$}

\put(-12,42){\line(1,0){48}}
\put(-12,2){\line(1,0){48}}
\put(2,42){\oval(20,25)[b]}
\put(32,42){\line(0,-1){20}}
\qbezier(32,22)(32,14)(22,12)
\qbezier(22,12)(12,10)(12,2)
\put(10,-15){$\updownarrow$}

\put(58,42){\line(1,0){48}}
\put(58,2){\line(1,0){48}}
\put(72,42){\oval(20,25)[b]}
\put(92,2){\oval(20,25)[t]}
\put(80,-15){$\updownarrow$}

\put(128,42){\line(1,0){48}}
\put(128,2){\line(1,0){48}}
\qbezier(152,42)(152,34)(142,32)
\qbezier(142,32)(132,30)(132,22)
\put(172,42){\line(0,-1){20}}
\put(132,22){\line(0,-1){20}}
\qbezier(172,22)(172,14)(162,12)
\qbezier(162,12)(152,10)(152,2)
\put(150,-15){$\updownarrow$}
\end{picture}
$$
$$
\begin{picture}(10,64)
\put(-164,20){$i_{r+1}=i_r+1:$}

\put(-82,42){\line(1,0){48}}
\put(-82,2){\line(1,0){48}}
\qbezier(-58,42)(-58,34)(-48,32)
\qbezier(-48,32)(-38,30)(-38,22)
\put(-38,22){\line(0,-1){20}}
\put(-68,2){\oval(20,25)[t]}

\put(-12,42){\line(1,0){48}}
\put(-12,2){\line(1,0){48}}
\put(22,42){\oval(20,25)[b]}
\put(-8,42){\line(0,-1){20}}
\qbezier(-8,22)(-8,14)(2,12)
\qbezier(2,12)(12,10)(12,2)

\put(58,42){\line(1,0){48}}
\put(58,2){\line(1,0){48}}
\qbezier(82,42)(82,34)(92,32)
\qbezier(92,32)(102,30)(102,22)
\put(62,42){\line(0,-1){20}}
\put(102,22){\line(0,-1){20}}
\qbezier(62,22)(62,14)(72,12)
\qbezier(72,12)(82,10)(82,2)

\put(128,42){\line(1,0){48}}
\put(128,2){\line(1,0){48}}
\put(162,42){\oval(20,25)[b]}
\put(142,2){\oval(20,25)[t]}
\end{picture}
$$
The part of the diagram just displayed can belong to either one or two connected components
in the larger diagram
$\Tstacked{\lambda}$. In the former case we define the type
to be $1, x$ or $y$ according to whether the connected component is a
counter-clockwise circle, a clockwise circle or a line
segment;
in the latter case we define the type to be
$1 \otimes 1, 1 \otimes x, 1 \otimes y, x \otimes x$, $x \otimes y$
or $y \otimes y$
according to whether there are two counter-clockwise circles, one
counter-clockwise and one clockwise circle, one counter-clockwise
circle and one line segment, two clockwise circles, one
clockwise circle and one line segment, or two line segments.
Erase all the labels from the one or two components, transform the
$r$th and $(r+1)$th layers 
as indicated by the correspondence $\updownarrow$ in the above diagrams, then
finally reintroduce labels into the two or one components created by
this transformation
 to obtain 
some new diagrams of the form $\Sstacked{\lambda}$ for standard tableaux
$\Stab$; then $\bar\psi_r v_\T^\lambda$ is defined to be the sum of the
corresponding
basis vectors $v_\Stab^\lambda$.
The rules to reintroduce labels in the final step here depends on the initial type as follows:
\begin{align}\label{rule1}
1 &\mapsto 1 \otimes x + x \otimes 1,\;\quad\qquad
x \mapsto x \otimes x, \;\quad\qquad y \mapsto x \otimes y,\\
1 \otimes 1 &\mapsto 1,
\:
1 \otimes x \mapsto x,
\:
1 \otimes y \mapsto y,
\:
x \otimes x \mapsto 0,
\:
x \otimes y \mapsto 0,
\:
y \otimes y \mapsto 0,
\label{rule2}
\end{align}
where again $1$ represents a counter-clockwise circle, 
$x$ a clockwise circle and $y$ a line segment.
The first rule in (\ref{rule1}) means that we get two diagrams in which the two
components are
oriented counter-clockwise and clockwise in the first
and vice versa in the second; 
the last three rules in (\ref{rule2}) mean that we get zero; the other
five rules are
interpreted similarly.
For example
$\psi_1 v_{\T_7}^\lambda = 0$,
$\psi_2 v_{\T_2}^\lambda = v_{\T_3}^\lambda$ and
$\psi_3 v_{\T_8}^\lambda = y_3 v_{\T_7}^\lambda - y_4 v_{\T_7}^\lambda$.

\vspace{2mm}
\noindent
{\em Notes.}
In the case that $\xi \neq 1$, the algebra
$H_d^{p,q}$ can obviously be identified with the finite
Iwahori-Hecke algebra 
of type $B_d$ at long root
parameter $\xi$
and short root parameter $-\xi^{q-p}$,
the generator usually denoted $T_0$ in that Iwahori-Hecke algebra 
being $-\xi^{-p} L_1$.
Furthermore by the main result of \cite{BKyoung}
the algebra $H_d^{p,q}$
is isomorphic 
in both the 
non-degenerate and the degenerate cases to the 
cyclotomic {quiver Hecke algebra} of 
\cite{KLa}, \cite{Rou}
for the quiver $A_\infty$ and the level two
weight $\La_p + \La_q$.
The semisimplicity criterion for $H_d^{p,q}$ is due to Dipper and James 
\cite[Theorem 5.5]{DJ}. In all the semisimple cases an analogue of Young's orthogonal form was worked out by Hoefsmit in \cite{H}.
The construction of Specht modules as induced modules originates in work of Vazirani. Theorem~\ref{vazt} is essentially the level two
case of \cite[Theorem 3.4]{Vaz}
if $p \geq q$; it can be proved by similar techniques when $p < q$. 

The cyclotomic quiver Hecke
algebra just mentioned is naturally $\Z$-graded. The
Young module $Y(\lambda)$ can be interpreted as graded module over this
graded algebra, with $\Z$-grading defined so that the basis vector
$v_\T^\lambda$ is of degree equal to the number of $\up$'s or the
number of $\down$'s in the weight diagram of $\lambda$, whichever is
smaller, plus the total number of clockwise circles minus the total
number of counter-clockwise circles in the diagram
$\Tstacked{\lambda}$. 
The grading on $Y(\lambda)$ induces gradings on the quotients
$S(\lambda)$ and (assuming $\lambda$ is restricted) $D(\lambda)$, so
that the basis elements $v_\T$ and $\bar v_\T$ constructed in Theorems~\ref{sb} and \ref{sf} are
homogeneous of degree equal to
\begin{equation}\label{dt}
\deg(\T)
:= 
\#(\text{clockwise cups})
- \#(\text{counter-clockwise caps})
\end{equation}
in the stretched cup diagram associated to $\T$.
In fact with this grading $S(\lambda)$ is isomorphic to the {\em graded Specht module} of
\cite{BKW}, and the definition (\ref{dt}) agrees with \cite[(3.5)]{BKW}.
The filtrations in Theorems~\ref{sb} and \ref{sf} are filtrations of
graded modules; the section $M_j / M_{j-1}$ in Theorem~\ref{sb} is
actually isomorphic as a graded module to $S(\mu_j)\langle d_j
\rangle$ (the graded Specht module $S(\mu_j)$
shifted up in degree by $d_j$) where $d_j$ is the number of clockwise
caps in the unique $\mu_j$-cap diagram of weight $\lambda$; the
section
$N_j / N_{j+1}$ in Theorem~\ref{sf} is isomorphic as a graded module to $\bigoplus_\mu
D(\mu)\langle j \rangle$ summing over $\mu$ as in the statement of the
theorem.

Over fields of characteristic $0$, Theorem~\ref{cm} can be deduced from Ariki's categorification theorem \cite{Ari}, \cite{BKariki}; the necessary combinatorics of canonical bases was worked out by Leclerc and Miyachi \cite{LM} 
in terms of the combinatorics of Lusztig's symbols. 
Our formulation using
cap diagrams is equivalent to this.
The observation that Theorem~\ref{cm} is also valid over fields of positive characteristic was first observed by Ariki and Mathas \cite[Corollary 3.7]{AM}.

In \cite{BS3}, we explained a different approach starting from
the explicit construction of the 
algebra $K_d^{p,q}$ 
(to be explained in section 4 below), and also an alternative construction of the
left $K_d^{p,q}$-module $T$ from Theorem~\ref{qh} in terms of certain
special projective functors $F_i$;  although we worked over the ground field $\mathbb{C}$
the relevant arguments in \cite{BS3} 
can be carried out over arbitrary fields with only minor modifications as
noted in \cite[Remark 8.7]{BS3}.
The precise references needed to extract
Theorem~\ref{qh} from \cite{BS1}--\cite{BS3} are as follows:
(1) follows from the alternative
definition of $T$ from \cite[(6.1)]{BS3} plus \cite[Lemma
6.1]{BS3};
(2) follows from \cite[Corollary 8.6]{BS3} plus the definition
\cite[(6.2)]{BS3};
(3) and (4) apart from the isomorphisms
$\pi P(\lambda) \cong Y(\lambda)$ and $\pi V(\lambda) \cong S(\lambda)$
follow from \cite[Lemma 8.13]{BS3}.
The isomorphism $\pi P(\lambda) \cong Y(\lambda)$
and the explicit construction of $Y(\lambda)$ via the orthogonal form
described above can be deduced from 
\cite[Lemma 6.6]{BS3}; to get the precise formulae 
(\ref{f1})--(\ref{f2}) one needs also to use the isomorphism theorem from
\cite{BKyoung}.
The isomorphism $\pi V(\lambda) \cong S(\lambda)$ 
is established in the degenerate case in \cite[Lemma 9.3 and Corollary 9.6]{BS3}; it can be deduced in the non-degenerate case 
too by a base change argument involving the construction of
\cite{BKW}.
Theorem~\ref{sb} is a consequence 
of \cite[Theorem 5.1]{BS1} on applying the Schur functor. Similarly Theorem~\ref{sf} is a consequence of
\cite[Theorem 5.2]{BS1}; it obviously implies Theorem~\ref{cm} too.

The notion of quasi-hereditary cover mentioned in Theorem~\ref{qh}(3) was introduced by Rouquier in
\cite[$\S$4.2]{R1}; quasi-hereditary algebras of course go back to the
seminal work of Cline, Parshall and Scott \cite{CPS}. 
The quasi-hereditary cover 
$K_d^{p,q}$ of $H_d^{p,q}$ is Morita
equivalent to another well known quasi-hereditary cover of
$H_d^{p,q}$, namely, 
the (level two)
{\em cyclotomic Schur algebra} 
of Dipper, James and Mathas
from \cite{DJM}; see
also \cite[$\S$6]{AMR} which described the degenerate analogues of
these algebras too.
This Morita equivalence is a consequence of the double centralizer
property. The key point is that our Young modules are the same as the images of the
projective indecomposable modules of the cyclotomic 
Schur algebra under its
Schur functor, as can be proved by an argument involving the special projective
functors $F_i$ analogous to the proof of \cite[Lemma 8.16(ii)]{BS3};
one just needs to know in the cyclotomic Schur algebra setting
that $F_i$ commutes with the Schur functor just like in \cite[Lemma 8.13(iii)]{BS3}.
For a diagrammatic description of this algebra in the spirit of
Khovanov and Lauda, and a
remarkable generalization to other quivers, 
see the recent preprint of Webster \cite{W}.

\section{Khovanov's arc algebra}

Let $\sim$ be the equivalence relation on the set of weight diagrams
defined by $\lambda\sim\mu$ if
$\mu$ is obtained from $\lambda$ by permuting some of the labels
$\down$ and $\up$.
Let $\Lambda$ be any (not necessarily finite) set of weight diagrams
closed under $\sim$.
We are going to recall the definition of 
an algebra
$K_\Lambda$, which is a generalization of Khovanov's arc algebra. Then we will relate
this algebra for particular $\Lambda$ to 
the algebra $K_d^{p,q}$ from the previous section.

We introduced already the notion of a $\lambda$-cap
diagram for any weight diagram $\lambda$.
There is an entirely analogous notion of a {\em $\lambda$-cup
  diagram},
attaching cups $\cup$ and rays down to infinity $|$ below the number
line following the same rules as before. The {\em weight}
$\wt(\tA)$ of a $\lambda$-cup diagram is defined in the same way
as for cap diagrams.
A {\em $\lambda$-circle diagram} means a composite diagram of the form
$\Astacked{\tB}$ obtained by gluing a $\lambda$-cup diagram $\tA$
under
a $\lambda$-cap diagram $\tB$.
Here are two examples (where all vertices not displayed are labelled
$\circ$ or $\cross$):
$$
\begin{picture}(265,34)
\put(-30,16){$\Astacked{\tB}$}
\put(-17,17.5){$=$}
\put(138,16){$\Cstacked{\tD}$}
\put(151,17.5){$=$}
\put(-3,20){\line(1,0){121}}
\put(20.1,20.1){$\scriptstyle\down$}
\put(66.1,20.1){$\scriptstyle\down$}
\put(112.1,20.1){$\scriptstyle\down$}
\put(43.1,15.5){$\scriptstyle\up$}
\put(89.1,15.5){$\scriptstyle\up$}
\put(-2.9,15.5){$\scriptstyle\up$}
\put(80.5,20){\oval(23,23)[b]}
\put(80.5,20){\oval(23,23)[t]}
\put(34.5,20){\oval(23,23)[t]}
\put(80.5,20){\oval(69,40)[b]}
\put(0,0){\line(0,1){30}}
\put(23,0){\line(0,1){20}}
\put(115,20){\line(0,1){10}}

\put(165,20){\line(1,0){121}}
\put(179.5,20){\oval(23,23)[t]}
\put(202.5,20){\oval(23,23)[b]}
\put(248.5,20){\oval(23,23)[b]}
\put(168,0){\line(0,1){20}}
\put(283,0){\line(0,1){30}}
\put(260,20){\line(0,1){10}}
\put(237,20){\line(0,1){10}}
\put(214,20){\line(0,1){10}}
\put(188.1,20.1){$\scriptstyle\down$}
\put(257.1,20.1){$\scriptstyle\down$}
\put(280.1,20.1){$\scriptstyle\down$}
\put(211.1,15.5){$\scriptstyle\up$}
\put(234.1,15.5){$\scriptstyle\up$}
\put(165.1,15.5){$\scriptstyle\up$}
\end{picture}
$$

Now we can define the algebra $K_\Lambda$.
As a vector space (over our fixed ground field $F$) $K_\Lambda$ has a distinguished basis consisting of
all the $\lambda$-circle diagrams
 $\Astacked{\tB}$ for all $\lambda\in \Lambda$. 
The multiplication is defined as follows.
Given two basis vectors $\Astacked{\tB}$ and $\Cstacked{\tD}$, their product is zero unless
$\wt(\tB) = \wt(\tC)$.
Assuming $\wt(\tB) = \wt(\tC)$, all the caps and rays in $\tB$ are in
the same positions as the cups and rays in $\tC$. We draw the diagram
$\Astacked{\tB}$ under the diagram $\Cstacked{\tD}$ and stitch
corresponding rays together to obtain a new composite diagram with a
symmetric middle section. 
For example if $\tA,\tB,\tC$ and $\tD$ are as above we get the diagram
$$
\begin{picture}(120,68)
\put(-32,20){$\Astacked{\tB}$}
\put(-32,44){$\Cstacked{\tD}$}
\put(-27.4,32.1){\line(0,1){8.8}}
\put(-17,33){$=$}
\dashline{2}(80.5,32)(80.5,43)
\dashline{2}(34.5,32)(34.5,43)
\put(0,20){\line(1,0){115}}
\put(0,55){\line(1,0){115}}

\put(20.1,20.1){$\scriptstyle\down$}
\put(66.1,20.1){$\scriptstyle\down$}
\put(112.1,20.1){$\scriptstyle\down$}
\put(43.1,15.5){$\scriptstyle\up$}
\put(89.1,15.5){$\scriptstyle\up$}
\put(-2.9,15.5){$\scriptstyle\up$}

\put(80.5,20){\oval(23,23)[b]}
\put(80.5,20){\oval(23,23)[t]}
\put(34.5,20){\oval(23,23)[t]}
\put(80.5,20){\oval(69,40)[b]}
\put(0,0){\line(0,1){55}}
\put(23,0){\line(0,1){20}}
\put(115,20){\line(0,1){45}}

\put(11.5,55){\oval(23,23)[t]}
\put(34.5,55){\oval(23,23)[b]}
\put(80.5,55){\oval(23,23)[b]}
\put(92,55){\line(0,1){10}}
\put(69,55){\line(0,1){10}}
\put(46,55){\line(0,1){10}}

\put(20.1,55.1){$\scriptstyle\down$}
\put(89.1,55.1){$\scriptstyle\down$}
\put(112.1,55.1){$\scriptstyle\down$}
\put(43.1,50.5){$\scriptstyle\up$}
\put(-2.9,50.5){$\scriptstyle\up$}
\put(66.1,50.5){$\scriptstyle\up$}
\end{picture}
$$
Then we iterate a certain {\em surgery procedure} to be explained 
in the next paragraph
in order to smooth out all
the cup-cap pairs in the symmetric middle section of the diagram
(indicated by dotted lines in the above example). This produces some
new diagrams in which the middle section involves only vertical line
segments. Finally we collapse the middle sections in these new
diagrams to obtain some circle diagrams, and define the desired product to be the sum of the
corresponding basis vectors in $K_\Lambda$.
In the above example applying the left then right
surgeries produces the diagrams
$$
\begin{picture}(260,66)
\put(122,32){then}
\put(-10,20){\line(1,0){115}}
\put(-10,55){\line(1,0){115}}
\put(10.1,20.1){$\scriptstyle\down$}
\put(56.1,20.1){$\scriptstyle\down$}
\put(102.1,20.1){$\scriptstyle\down$}
\put(33.1,15.5){$\scriptstyle\up$}
\put(79.1,15.5){$\scriptstyle\up$}
\put(-12.9,15.5){$\scriptstyle\up$}
\put(70.5,20){\oval(23,23)[b]}
\put(70.5,20){\oval(23,23)[t]}
\put(70.5,20){\oval(69,40)[b]}
\put(-10,0){\line(0,1){55}}
\put(13,0){\line(0,1){55}}
\put(105,20){\line(0,1){45}}
\put(1.5,55){\oval(23,23)[t]}
\put(70.5,55){\oval(23,23)[b]}
\put(82,55){\line(0,1){10}}
\put(59,55){\line(0,1){10}}
\put(36,20){\line(0,1){45}}
\put(10.1,55.1){$\scriptstyle\down$}
\put(79.1,55.1){$\scriptstyle\down$}
\put(102.1,55.1){$\scriptstyle\down$}
\put(33.1,50.5){$\scriptstyle\up$}
\put(-12.9,50.5){$\scriptstyle\up$}
\put(56.1,50.5){$\scriptstyle\up$}
\dashline{2}(70.5,32)(70.5,43)

\put(160,20){\line(1,0){115}}
\put(160,55){\line(1,0){115}}
\put(180.1,20.1){$\scriptstyle\down$}
\put(249.1,20.1){$\scriptstyle\down$}
\put(272.1,20.1){$\scriptstyle\down$}
\put(203.1,15.5){$\scriptstyle\up$}
\put(226.1,15.5){$\scriptstyle\up$}
\put(157.1,15.5){$\scriptstyle\up$}
\put(240.5,20){\oval(23,23)[b]}
\put(240.5,20){\oval(69,40)[b]}
\put(160,0){\line(0,1){55}}
\put(183,0){\line(0,1){55}}
\put(275,20){\line(0,1){45}}
\put(171.5,55){\oval(23,23)[t]}
\put(252,20){\line(0,1){45}}
\put(229,20){\line(0,1){45}}
\put(206,20){\line(0,1){45}}
\put(180.1,55.1){$\scriptstyle\down$}
\put(249.1,55.1){$\scriptstyle\down$}
\put(272.1,55.1){$\scriptstyle\down$}
\put(203.1,50.5){$\scriptstyle\up$}
\put(157.1,50.5){$\scriptstyle\up$}
\put(226.1,50.5){$\scriptstyle\up$}
\end{picture}
$$
Hence we have that
$$
\begin{picture}(125,34)
\put(-48,17){$\phantom{\Astacked{\tB}} \: \phantom{\Cstacked{\tD}}\:\: =$}
\put(-45,15.4){$\Astacked{\tB} \: \Cstacked{\tD} \:\:\phantom{=}$}
\put(0,20){\line(1,0){115}}
\put(11.5,20){\oval(23,23)[t]}
\put(80.5,20){\oval(23,23)[b]}
\put(80.5,20){\oval(69,40)[b]}
\put(23,0){\line(0,1){20}}
\put(0,0){\line(0,1){20}}
\put(115,20){\line(0,1){10}}
\put(92,20){\line(0,1){10}}

\put(69,20){\line(0,1){10}}
\put(46,20){\line(0,1){10}}
\put(20.1,20.1){$\scriptstyle\down$}
\put(89.1,20.1){$\scriptstyle\down$}
\put(112.1,20.1){$\scriptstyle\down$}
\put(43.1,15.5){$\scriptstyle\up$}
\put(66.1,15.5){$\scriptstyle\up$}
\put(-2.9,15.5){$\scriptstyle\up$}
\end{picture}
$$

The surgery procedure is similar to the procedure for computing
$\bar\psi_r$ explained in the last paragraph of the previous section, and goes as follows.
The cup-cap pair to be smoothed either belong to one or two connected
components in the larger diagram. We record a type $1,x,y,1 \otimes 1,
1 \otimes x, 1 \otimes y, x \otimes x, x \otimes y, y \otimes y$
according to whether these one or two components are counter-clockwise
circles ($1$), clockwise circles ($x$) or line segments ($y$).
Then erase the labels on the one or two components and smooth out the
cup-cap pair to get two vertical lines. Finally reintroduce the labels
according to the same rules (\ref{rule1})--(\ref{rule2}) as before with one modification (to take account of a
configuration which did not arise before): in the case $y \otimes y$
if it happens that both the components to start with are lines stretching from infinity
at the bottom to infinity at the top, with one oriented upwards and
the other oriented downwards, then we replace the rule
$y \otimes y \mapsto 0$ with the rule $y \otimes y \mapsto y \otimes y$.
The first surgery in the above example is exactly this situation.

The algebra $K_\Lambda$
has a $\Z_{\geq 0}$-grading defined by declaring that 
the basis vector $\Astacked{\tB}$ is of degree equal to the total number of
clockwise cups and cups in the circle diagram.
The mutually orthogonal idempotents $\{e_\lambda\:|\:\lambda
\in \Lambda\}$ defined by setting $e_\lambda :=
\Astacked{\tB}$ where $\tA$ and $\tB$ are the unique $\lambda$-cup
and $\lambda$-cap diagrams of
weight $\lambda$, respectively, give a basis for the 
degree zero component of $K_\Lambda$.
Hence $K_\Lambda$ is a basic algebra and its degree zero component is 
a (possibly infinite) direct sum of copies of $F$.
We have moreover for arbitrary $\Astacked{\tB}$ that
\begin{equation}
e_\lambda
\begin{picture}(14,0)
\put(2,-2){$\Astacked{\tB}$}
\end{picture} 
= \left\{
\begin{array}{ll}
\Astacked{\tB}&\text{if $\lambda = \wt(\tA)$,}\\
0&\text{otherwise,}
\end{array}
\right.
\qquad
\begin{picture}(14,0)
\put(1,-2){$\Astacked{\tB} $}
\end{picture} 
e_\lambda  = \left\{
\begin{array}{ll}
\Astacked{\tB}&\text{if $\wt(\tB) = \lambda$,}\\
0&\text{otherwise.}
\end{array}
\right.
\end{equation}
Hence $K_\Lambda = \bigoplus_{\lambda,\mu \in \Lambda} e_\lambda
K_\Lambda e_\mu$, so that $K_\Lambda$ is a {\em locally unital
  algebra}.
It is a unital algebra with identity element $1 = \sum_{\lambda \in
  \Lambda} e_\lambda$ if and only if $|\Lambda| < \infty$.
When talking about modules over $K_\Lambda$, we always mean modules $M$
that are locally unital in the sense that $M = \bigoplus_{\la\in\La} e_\lambda
M$.

\begin{Theorem}\label{nw}
Assume that every weight $\lambda \in \Lambda$ either has finitely many vertices labelled $\down$ or finitely many vertices labelled $\up$.
Then the category $K_\La\operatorname{-MOD}$ of finite dimensional
{\em graded} left $K_\La$-modules is a graded highest weight category with
projective
indecomposable modules $P(\lambda) := K_\La e_\la$,
standard modules $V(\lambda)$
and irreducible modules $L(\lambda)$; the gradings on these modules are fixed so that $L(\lambda)$ is
one-dimensional concentrated in degree $0$, and the canonical
homomorphisms
$P(\lambda)\twoheadrightarrow V(\lambda) \twoheadrightarrow
L(\lambda)$ are grading-preserving.
Moreover:
\begin{itemize}
\item[(1)]
For $\la,\mu \in \La$, 
the graded decomposition number
$[V(\lambda):L(\mu)]_q$ is equal to
$q^n$ if $\lambda \supset \mu$, where $n$ is the number of clockwise caps
in the unique $\lambda$-cap diagram of weight $\mu$;
otherwise, $[V(\lambda):L(\mu)]_q = 0$.
\item[(2)] The positively graded 
algebra $K_\Lambda$ is standard Koszul, i.e. the irreducible modules
$L(\lambda)$
and the standard modules $V(\lambda)$
have linear projective resolutions.
Moreover the associated
Kazhdan-Lusztig polynomials
$$
p_{\lambda,\mu}(q) := \sum_{i \geq 0} q^i \dim
\operatorname{Ext}_{K_\Lambda}^i(V(\lambda),L(\mu))
$$
are given explicitly by the following recurrence.
First 
$p_{\lambda,\mu}(q) = 0$ unless $\lambda \leq \mu$,
and $p_{\lambda,\lambda}(q) = 1$.
Now assume that $\lambda < \mu$. Pick $i < j$ such that the $i$th vertex
  of $\lambda$ is labelled $\down$, the $j$th vertex is labelled
  $\up$, and all vertices in between are labelled $\circ$ or
  $\cross$. For any weight diagram $\nu$ and $x,y \in \{\circ,\cross,\down,\up\}$
let $\nu\scriptstyle{[}xy\scriptstyle{]}$ be the weight diagram obtained from $\nu$ by relabelling vertex
$i$ by $x$ and vertex $j$ by $y$.
Then
$$
p_{\lambda,\mu}(q) = 
\left\{
\begin{array}{ll}
p_{\lambda\scriptscriptstyle{[}\circ\circ\scriptscriptstyle{]},\mu\scriptscriptstyle{[}\circ\circ\scriptscriptstyle{]}}(q)+q
p_{\lambda\scriptscriptstyle{[\wedge\vee]},\mu}(q)&\text{if $\mu = \mu\scriptstyle{[}\down\up\scriptstyle{]}$,}\\
qp_{\lambda\scriptscriptstyle{[\wedge\vee]},\mu}(q)&\text{otherwise.}
\end{array}
\right.
$$
\item[(3)]
For fixed $\mu \in \Lambda$, 
we have that $p_{\lambda,\mu}(1) \leq 1$ for all $\lambda
\in \Lambda$ if and only if it is impossible to find vertices $i < j <
k < l$ whose labels in $\mu$ are $\up,\down,\up,\down$, respectively.
In that case $L(\mu)$ possesses a BGG-type resolution 
$$
\cdots \rightarrow V_1(\mu) \rightarrow V_0(\mu)
\rightarrow L(\mu) \rightarrow 0
$$ 
with $V_i(\mu)
=\bigoplus_{\lambda \text{\:s.t.\:} p_{\lambda,\mu}(q)=q^i} V(\lambda) \langle i \rangle$.
\item[(4)] For $\lambda \in \Lambda$, we have that
$P(\lambda)$ is injective if and only if $\lambda$ is restricted.
\item[(5)] The algebra $K_\Lambda$ decomposes into blocks as 
$K_\Lambda = \bigoplus_{\Gamma \in \Lambda /
    \sim} K_\Gamma$.
\end{itemize}
\end{Theorem}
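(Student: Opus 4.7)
The plan is to adapt the cellular approach developed in \cite{BS1} for Khovanov's original arc algebra. First I set up the structural ingredients. For each $\lambda \in \Lambda$, write $\tA_\lambda$ and $\tB_\lambda$ for the unique $\lambda$-cup and $\lambda$-cap diagrams of weight $\lambda$, so that $e_\lambda$ is the circle diagram obtained by stacking $\tB_\lambda$ above $\tA_\lambda$. The projective module $P(\lambda) = K_\Lambda e_\lambda$ then has a distinguished basis of circle diagrams whose top half is $\tB_\lambda$. I define $V(\lambda)$ to be the quotient of $P(\lambda)$ by the span of those basis vectors whose underlying middle weight strictly dominates $\lambda$ in the Bruhat order; the surviving basis is indexed by $\lambda$-cup diagrams $\tA$, graded by the number of clockwise cups of $\tA$, and $L(\lambda)$ is the one-dimensional further quotient killing all $\tA \neq \tA_\lambda$.

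With this description, part (1) is essentially immediate: $L(\mu)\langle n \rangle$ appears as a composition factor of $V(\lambda)$ iff there is a $\lambda$-cup diagram $\tA$ with $\wt(\tA) = \mu$, which by the cup-cap symmetry exists iff $\lambda \supset \mu$, and the exponent $n$ equals the number of clockwise cups of $\tA$, i.e.\ the number of clockwise caps in the unique $\lambda$-cap diagram of weight $\mu$. The graded highest weight structure then follows once one verifies that $P(\lambda)$ itself admits a filtration by $V(\nu)\langle d_\nu\rangle$ for $\nu \supset \lambda$, which is checked from the diagrammatic basis using the surgery rules (\ref{rule1})--(\ref{rule2}) to analyse left multiplication. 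Part (5) is then direct, since the matching condition on circle diagrams combined with the surgery rules keeps all nonzero products inside a single $\sim$-class. For part (4), a diagrammatic calculation identifies the socle of $P(\lambda)$ with the top-degree basis vectors, and the injectivity criterion then reduces to the assertion that $\tB_\lambda$ has all rays labelled identically, which is the very definition of $\lambda$ being restricted.

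The main obstacle is part (2), the Koszulity and the recurrence for $p_{\lambda,\mu}(q)$. My plan is simultaneous induction on $\mu$ in the Bruhat order, constructing the minimal linear projective resolution of each $V(\mu)$ while verifying the recurrence in parallel. For $\mu = \mu[\down\up]$ with vertices $i < j$ as in the statement (labels $\circ$ or $\cross$ strictly between), one produces the short exact sequence of standard modules
\[
0 \longrightarrow V(\mu[\down\up])\langle 1\rangle \longrightarrow V(\mu[\circ\circ]) \longrightarrow V(\mu[\up\down]) \longrightarrow 0
\]
manufactured diagrammatically by removing and then restoring the $\down\up$ pair at positions $i,j$. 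Applying $\operatorname{Ext}_{K_\Lambda}^\bullet(V(\lambda), -)$ and using the inductively-known vanishing of extensions off the linear diagonal yields the stated recurrence after reindexing. Standard Koszulity is extracted in parallel by showing that the syzygies of the resulting resolution are generated in the next linear degree; this comes from the same comparison to $\mu[\circ\circ]$ and $\mu[\down\up]$, both of which have strictly fewer $\down$ or $\up$ labels than $\mu$, so the induction closes.

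Finally for part (3), iterating the recurrence of part (2) shows that each $p_{\lambda,\mu}(q)$ collapses to a single monomial $q^{n(\lambda,\mu)}$ unless at some stage the inductive reduction encounters the degenerate configuration producing Catalan-type multiplicities, which by the analysis of $\up\down\up\down$ indicated earlier in the section happens precisely when $\mu$ contains the forbidden pattern $\up,\down,\up,\down$ at some four vertices $i<j<k<l$. When this pattern is absent, setting $V_i(\mu) := \bigoplus_{\lambda : p_{\lambda,\mu}(q) = q^i} V(\lambda)\langle i\rangle$ assembles the linear projective resolution of $L(\mu)$ into precisely the BGG-type resolution claimed.
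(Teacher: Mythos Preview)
Your sketches for parts (1), (4) and (5) are in line with the cellular approach of \cite{BS1}, which is indeed what the paper cites for (1) (with (5) as an immediate consequence) and with \cite[\S6]{BS2} for (4). The paper itself gives no argument here beyond those references, so on these points there is nothing to compare.

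The real problem is in part (2). Your short exact sequence
\[
0 \longrightarrow V(\mu[\down\up])\langle 1\rangle \longrightarrow V(\mu[\circ\circ]) \longrightarrow V(\mu[\up\down]) \longrightarrow 0
\]
is not a sequence of $K_\Lambda$-modules at all: the weight diagram $\mu[\circ\circ]$ has two fewer vertices labelled $\down$/$\up$ than the diagrams in $\Lambda$, so $V(\mu[\circ\circ])$ lives over a \emph{different} arc algebra $K_{\Lambda'}$. The recurrence in the theorem is deliberately cross-algebra in this way; it cannot be captured by a naive short exact sequence inside $K_\Lambda\text{-mod}$. In \cite[\S5]{BS2} this is handled via explicit diagrammatic ``geometric bimodules'' (projective functors) relating the module categories for $\Lambda$ and $\Lambda'$, and the linearity of resolutions is checked functorially rather than by the bare induction you propose. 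There is also a $\lambda$/$\mu$ confusion: the vertices $i<j$ in the recurrence are chosen in $\lambda$, not in $\mu$, so even the intended long exact sequence in $\operatorname{Ext}^\bullet(V(\lambda),-)$ does not match the recurrence as you have written it.

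Part (3) has a separate gap. You conclude by saying that the $V_i(\mu)$ ``assemble the linear projective resolution of $L(\mu)$ into precisely the BGG-type resolution claimed''. But the linear projective resolution of $L(\mu)$ is built from \emph{projectives} $P(\lambda)$, whereas the BGG resolution is built from \emph{standard} modules $V(\lambda)$; knowing that the $p_{\lambda,\mu}(q)$ are monomials does not by itself convert one into the other. In \cite[\S7]{BS2} the BGG resolution is constructed directly (again using the projective functors), and the differentials between the $V_i(\mu)$ have to be produced and their exactness verified --- this is genuine work that your sketch does not address.
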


The precise connection between $K_\Lambda$ and
the algebra in Theorem~\ref{qh} is explained by the next theorem.

\begin{Theorem}\label{id}
The endomorphism algebra $K_d^{p,q}
= \End_{H_d^{p,q}}(\bigoplus_{\lambda \Vdash d} Y(\lambda))^{\op}$ from Theorem~\ref{qh} is 
canonically isomorphic to $K_\Lambda$
for $\Lambda := \{\lambda \Vdash d\}$ (interpreting
bipartitions as weight diagrams as explained in the previous section).
Under the isomorphism, $\Astacked{\tB} \in K_\Lambda$
corresponds to a
map
$Y(\lambda) \rightarrow Y(\mu)$ where
$\lambda := \wt(\tA)$ and $\mu := \wt(\tB)$.
This map is
defined
on $v_\T^\lambda \in Y(\lambda)$ by
drawing the diagram $\Tstacked{\lambda}$ (as defined in the previous section) under the diagram
$\Astacked{\tB}$
then iterating the surgery procedure in exactly the same way as in the
definition of
the
multiplication of $K_\Lambda$, to obtain 
some diagrams $\Sstacked{\mu}$
hence a sum of basis
vectors $v_\Stab^\mu \in Y(\mu)$.
\end{Theorem}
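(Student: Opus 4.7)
The plan is to define $\Phi \colon K_\Lambda \to K_d^{p,q}$ on basis elements by the explicit rule stated in the theorem, and then to verify in order: (a) for each basis vector $\Astacked{\tB}$ the prescription really does define an $H_d^{p,q}$-module map $Y(\wt(\tA)) \to Y(\wt(\tB))$; (b) $\Phi$ is multiplicative; (c) $\Phi$ is a bijection.

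Step (a) carries essentially all the technical content and is where the main obstacle lies. The key observation is that the actions of $\psi_r$ and $y_r$ on $Y(\lambda)$, as defined at the end of Section 3, are local: they modify only the portion of $\Tstacked{\lambda}$ in layers $r$ and $r+1$, strictly below the bottom number line of any circle diagram $\Astacked{\tB}$ stacked above, whereas the iterated surgeries used to reduce $\Tstacked{\lambda}\Astacked{\tB}$ to a sum of diagrams $\Sstacked{\mu}$ all take place above that bottom number line. So proving that $\Phi(\Astacked{\tB})$ commutes with $\psi_r$ and $y_r$ reduces to a finite case check: for each local configuration appearing in the definitions of $\bar\psi_r$ and $\bar y_r$ (the three cases $|i_r-i_{r+1}|>1$, $i_r=i_{r+1}$ and $|i_r-i_{r+1}|=1$, the last with its eight sub-configurations), one shows that performing the local modification at the bottom commutes with performing a cup-cap surgery further up, yielding the same formal sum of diagrams. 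The smoothing rules (\ref{rule1})--(\ref{rule2}) were calibrated in the previous section precisely so that this verification succeeds; once one accepts the matching dictionary between the two kinds of local moves, the check amounts to drawing pictures.

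Step (b) follows from the associativity of the surgery procedure: when $\Cstacked{\tD}$ is stacked above $\Astacked{\tB}$ which is stacked above $\Tstacked{\lambda}$, the order in which one resolves the two symmetric middle sections does not affect the final sum of diagrams. This is the same combinatorial fact used to prove that the multiplication on $K_\Lambda$ is itself associative, and it directly gives
\begin{equation*}
\Phi(\Astacked{\tB}\cdot \Cstacked{\tD})(v^\lambda_\T) = \Phi(\Cstacked{\tD})\bigl(\Phi(\Astacked{\tB})(v^\lambda_\T)\bigr),
\end{equation*}
which is the correct relation for a homomorphism into the opposite endomorphism algebra $\End_{H^{p,q}_d}(\bigoplus_\lambda Y(\lambda))^{\op}$.

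For step (c), a dimension count shows
\begin{equation*}
\dim e_\lambda K_\Lambda e_\mu = \#\{\nu \Vdash d : \nu \supset \lambda \text{ and } \nu \supset \mu\} = \dim e_\lambda K_d^{p,q} e_\mu,
\end{equation*}
the left equality because a $\nu$-cup diagram of weight $\lambda$ and a $\nu$-cap diagram of weight $\mu$ exist and are unique exactly when $\nu \supset \lambda$ and $\nu \supset \mu$, and the right equality from Theorem~\ref{qh}(3) (the Schur functor is fully faithful on projectives, so $\hom_{H^{p,q}_d}(Y(\lambda),Y(\mu)) = \hom_{K^{p,q}_d}(P(\lambda),P(\mu))$) combined with BGG reciprocity and the multiplicity formula in Theorem~\ref{cm}. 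Injectivity of $\Phi$ then follows by a triangularity argument: for the basis vector $\Astacked{\tB}$ of underlying weight $\nu$, the image has a distinguished non-zero leading term on the standard tableau whose stretched cup diagram matches $\tA$, and distinct triples $(\nu,\tA,\tB)$ produce linearly independent leading terms. Combined with the dimension count, $\Phi$ is the desired isomorphism.
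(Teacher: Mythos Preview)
Your approach is genuinely different from the paper's. The paper does not prove Theorem~\ref{id} directly; it cites \cite[Corollary 8.15]{BS3}, and the logical route taken there is essentially the reverse of yours. In \cite{BS3} one \emph{starts} with the diagrammatic algebra $K_\Lambda$, builds the module $T$ as a $K_\Lambda$-module via special projective functors $F_i$, and proves $H_d^{p,q}\cong \End_{K_\Lambda}(T)^{\op}$. The identification $K_d^{p,q}\cong K_\Lambda$ then drops out by the double centralizer property, and the explicit formulae for $y_r,\psi_r$ on $Y(\lambda)$ are \emph{derived} from this isomorphism (via \cite[Lemma 6.6]{BS3} and the isomorphism of \cite{BKyoung}) rather than verified against it. Your plan runs the argument in the opposite direction: take the explicit orthogonal-form formulae as primary and check by hand that the diagrammatic maps commute with them.

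That reversal is attractive in principle, but your step (a) is where the real work lies, and your ``locality'' justification is not quite right. The operators $\bar y_r$ and $\bar\psi_r$ are \emph{not} local in the sense you claim: their effect on $v_\T^\lambda$ depends on the full connected component of $\Tstacked{\lambda}$ passing through layer $r$, and that component generally reaches all the way up to the cap diagram at the top. After applying $\Phi(\Astacked{\tB})$ the cap at the top has changed (from the $\mu$-cap of weight $\lambda$ to one of weight $\mu$), so the relevant component in $\Sstacked{\mu}$ is genuinely different. The correct reason the two operations commute is not locality but rather that both the surgery procedure defining $\Phi$ and the rules (\ref{rule1})--(\ref{rule2}) defining $\bar y_r,\bar\psi_r$ are governed by the same underlying 2D TQFT (the one used in \cite{BS1} to show multiplication in $K_\Lambda$ is well-defined and associative). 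So your step (a) can be made to work, but it rests on the same TQFT-functoriality that underlies step (b), not on a purely local finite case check.

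A smaller point on step (c): Theorem~\ref{cm} only gives $[S(\lambda):D(\mu)]$ for \emph{restricted} $\mu$, which is not enough to compute $\dim\hom_{K_d^{p,q}}(P(\lambda),P(\mu))$ for arbitrary $\lambda,\mu$. You should instead invoke the standard filtration of $Y(\lambda)$ from Theorem~\ref{sb} (equivalently, of $P(\lambda)$ via Theorem~\ref{qh}(4)), which gives $(P(\lambda):V(\nu))=1$ iff $\nu\supset\lambda$; BGG reciprocity then yields $[V(\nu):L(\lambda)]$ for \emph{all} $\lambda$, and your dimension count goes through.
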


\vspace{2mm}
\noindent
{\em Notes.}
Assuming $|\Lambda| < \infty$, the algebra $K_\Lambda$ is the
{quasi-hereditary cover of the generalized Khovanov algebra} $H_\La$ from
\cite{BS1}, which was
introduced
already in \cite{CK} and
\cite[$\S$5]{S}. 
More precisely, letting $\Lambda^\circ := \{\lambda \in
\Lambda\:|\:\lambda\text{ is restricted}\}$, 
the {\em generalized Khovanov algebra}
is the (symmetric) subalgebra $$
H_\La := \bigoplus_{\lambda,\mu
  \in \Lambda^\circ} e_\lambda K_\Lambda e_\mu
$$ 
of $K_\Lambda$, and
there is a double centralizer property implying that $K_\Lambda\operatorname{-mod}$ is a
highest weight cover of $H_\Lambda\operatorname{-mod}$ in the sense of
\cite[$\S$4.2]{R1}; see \cite[$\S$6]{BS2}.
In the special case that the weights in $\Lambda$ have the same number
of labels $\up$ as $\down$,
the algebra $H_\Lambda$ is exactly the original arc algebra introduced
by Khovanov in \cite{K2}; in that case the diagrams indexing the basis for
$H_\Lambda$ involve only (closed) circles, no line segments, and 
the multiplication has an elegant formulation in terms of a certain
TQFT. This interpretation is the key to
proving that the multiplication as formulated above is well defined independent of the
order of the surgery procedures and that it is associative; see \cite{BS1}.
Also in \cite{BS1} we showed that
the diagram bases for both $K_\Lambda$ and for $H_\Lambda$ are cellular bases in the sense of \cite{GL}; in fact they are examples of {\em graded cellular algebras}
as recently formalized by Hu and Mathas \cite[$\S$2]{HM}.

In the statement of Theorem~\ref{nw}, we have used the language of highest weight categories from \cite{CPS} rather 
than of quasi-hereditary algebras because $K_\Lambda$ is not necessarily finite
dimensional. The assumption on $\Lambda$ 
in the opening sentence of the theorem is necessary since without it the analogues of the standard modules $V(\lambda)$ have infinite length, but
the remaining statements (1)--(5) of the theorem remain 
true without this assumption.
Theorem~\ref{nw}(1) 
is \cite[Theorem 5.2]{BS1}
and (5) is an easy consequence;
for (2), (3) and (4) 
see \cite[$\S$5, $\S$7 and $\S$6]{BS2}, respectively.
The recurrence relation for Kazhdan-Lusztig polynomials in
Theorem~\ref{nw}(2) is the same as the recurrence for
the Kazhdan-Lusztig polynomials attached to Grassmannians 
discovered by Lascoux and Sch\"utzenberger \cite[Lemme 6.6]{LS}. This
coincidence is explained by Theorem~\ref{cato} below.
There is also
a closed formula for these Kazhdan-Lusztig polynomials due again to Lascoux
and Sch\"utzenberger; see
\cite[(5.3)]{BS2} for an equivalent formulation in terms of cap
diagrams.

Theorem~\ref{id} 
is \cite[Corollary 8.15]{BS3}.
One consequence is that the level two Hecke algebra
$H^{p,q}_d$ is itself Morita equivalent to the generalized Khovanov
algebra $H_\Lambda$ for $\Lambda := \{\lambda \Vdash d\}$; see
\cite[Theorem 6.2]{BS3}.

\section{Category $\mathcal O$ for Grassmannians}

Let $\mathfrak{g} := \mathfrak{gl}_{m+n}(\C)$, 
$\mathfrak{t}$ be the Cartan subalgebra of diagonal
matrices, and $\mathfrak{b}$ be the Borel subalgebra of upper
triangular matrices. Let $\eps_1,\dots,\eps_{m+n}$ be the basis for
$\mathfrak{t}^*$ dual to the obvious basis of $\mathfrak{t}$
consisting of the diagonal matrix units, and let $(.,.)$ be the bilinear form on
$\mathfrak{t}^*$
with respect to which the $\eps_i$'s are orthonormal.
For each $\lambda \in \mathfrak{t}^*$,
let $L(\lambda)$ be an
irreducible $\mathfrak{g}$-module of $\mathfrak{b}$-highest weight
$\lambda$.
Finally from now until the end of the section we let
\begin{equation}\label{ol}
\Lambda := \left\{\lambda 
\in \mathfrak{t}^*\:\Bigg|\:
\begin{array}{ll}
(\lambda+\rho,\eps_1),\dots,(\lambda+\rho,\eps_{m+n}) \in \Z\\
(\lambda+\rho,\eps_1) > \cdots > (\lambda+\rho,\eps_m)\\
(\lambda+\rho,\eps_{m+1}) > \cdots > (\lambda+\rho,\eps_{m+n})
\end{array}
\right\}
\end{equation}
 where
$\rho := -\eps_2-2\eps_3-\cdots-(m+n-1)\eps_{m+n}$.

We are interested in the category 
$\mathcal O(m,n)$ of all 
$\mathfrak{g}$-modules that are semisimple over $\mathfrak{t}$
and possess a composition series with composition factors of
the form $L(\lambda)$ for $\lambda \in \Lambda$.
This is the sum of all ``integral'' blocks of the parabolic analogue
of the usual Bernstein-Gelfand-Gelfand category $\mathcal O$
corresponding to the standard parabolic subalgebra $\mathfrak{p}$
with Levi factor $\mathfrak{gl}_m(\C)\oplus\mathfrak{gl}_n(\C)$.
It is a highest weight category with
irreducible modules $\{L(\lambda) \:|\:\lambda \in \Lambda\}$, standard modules
$\{V(\lambda) \:|\:\lambda \in \Lambda\}$ (which can be constructed
explicitly as parabolic Verma modules) and projective indecomposable modules
$\{P(\lambda) \:|\:\lambda \in \Lambda\}$.

We identify $\lambda \in \Lambda$ with a weight
diagram in the sense of section 3 by putting the symbol $\down$
at all vertices indexed by the set $\{(\lambda+\rho,\eps_1),\dots,(\lambda+\rho,\eps_m)\}$ and
the symbol $\up$ at all vertices indexed by the set
$\{(\lambda+\rho,\eps_{m+1}),\dots,(\lambda+\rho,\eps_{m+n})\}$,
interpreting both labels as $\cross$ and neither as $\circ$ as before.
These weight diagrams are slightly different from the weight diagrams
arising from bipartitions in section 3: there are now infinitely many
$\circ$'s both to the left and the right.
Viewing $\La$ as a set of weight diagrams in this way, it is closed under $\sim$.
Let $K(m,n)$ be the arc algebra $K_\Lambda$ from the previous section
for this choice of $\La$.

\begin{Theorem}\label{cato}
Let $P := \bigoplus_{\lambda \in \Lambda} P(\lambda)$.
The locally finite
endomorphism\footnote{A {\em locally finite} endomorphism means one that 
is zero on all but finitely many $P(\lambda)$'s.} algebra
$\End_{\mathfrak{g}}^{fin}(P)^{\operatorname{op}}$
is 
isomorphic to the arc algebra $K(m,n)$ so
that 
$e_\lambda \in K(m,n)$ corresponds to the projection onto
the summand $P(\lambda)$.
Fixing such an isomorphism, the functor 
\begin{equation}\label{ark}
\hom_{\mathfrak{g}}(P, ?):\mathcal{O}(m,n)\rightarrow
K(m,n)\operatorname{-mod}
\end{equation}
is an equivalence of categories 
sending $P(\lambda), V(\lambda),L(\lambda) \in \mathcal{O}(m,n)$ 
to (the ungraded
versions of) the $K(m,n)$-modules with the same name from Theorem~\ref{nw}.
\end{Theorem}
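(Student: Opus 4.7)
The strategy is a block-by-block reduction combined with Rouquier's uniqueness theorem for quasi-hereditary covers, with generalized Schur-Weyl duality providing the bridge between both categories and the level two Hecke algebras of section~2.

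First I would decompose both sides into blocks indexed by $\sim$-equivalence classes $\Gamma \subseteq \Lambda$. On the $K(m,n)$ side this is Theorem~\ref{nw}(5); on the $\mathcal{O}(m,n)$ side it is the (integral) linkage principle for parabolic category $\mathcal{O}$, since two weights in $\Lambda$ lie in the same $\sim$-class if and only if they are $\rho$-shift conjugate under the Weyl group $S_{m+n}$. It therefore suffices to produce, for each $\Gamma$, an equivalence between the $\Gamma$-block of $\mathcal{O}(m,n)$ and the block $K_\Gamma\operatorname{-mod}$ of $K(m,n)\operatorname{-mod}$, and to check the naming of standards, projectives, and simples.

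Fix $\Gamma$ and let $d$ be its defect, i.e., the smaller of the numbers of $\down$'s and $\up$'s in weights of $\Gamma$. The generalized Schur-Weyl duality of \cite{BKschur}, \cite{BKariki} supplies a mixed-tensor object $M \in \mathcal{O}(m,n)$ with a commuting right action of a level two cyclotomic Hecke algebra $H_d^{p,q}$ (the parameters $p,q$ being read off from $\Gamma$ via the weight-diagram dictionary of section~3 applied to bipartitions). This action satisfies a double centralizer property under which the $\Gamma$-block of $\mathcal{O}(m,n)$ is realised as a highest-weight cover, in the sense of \cite[\S 4.2]{R1}, of the $\Gamma$-block of $H_d^{p,q}\operatorname{-mod}$, with the parabolic Verma modules $V(\lambda)$ sent to the Specht modules $S(\lambda)$ of section~2. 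On the other hand, Theorem~\ref{qh}(3) together with Theorem~\ref{id} shows that the $\Gamma$-block $K_\Gamma\operatorname{-mod}$ of $K(m,n)\operatorname{-mod}$ is a second highest-weight cover of the same block of $H_d^{p,q}\operatorname{-mod}$, with its standards $V(\lambda)$ also going to $S(\lambda)$. Both covers have the same weight poset $(\Gamma,\geq)$, since the Bruhat order $\geq$ of section~3 translates under (\ref{ol}) to the usual dominance order on $\rho$-shifted highest weights of $\mathfrak{g}$. Rouquier's uniqueness theorem for $1$-faithful highest-weight covers then forces the two covers to be canonically equivalent via an equivalence sending $P(\lambda)\mapsto P(\lambda)$, $V(\lambda)\mapsto V(\lambda)$, $L(\lambda)\mapsto L(\lambda)$. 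Assembling across $\Gamma$ and identifying the resulting projective generator with $P = \bigoplus_{\lambda \in \Lambda} P(\lambda)$ yields the isomorphism $\End_\mathfrak{g}^{fin}(P)^{\operatorname{op}} \cong K(m,n)$ and the equivalence (\ref{ark}).

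The main obstacle is verifying the hypotheses of Rouquier's uniqueness theorem, above all the $1$-faithfulness of the Schur functor $\hom_\mathfrak{g}(M,?)$ from the $\Gamma$-block of $\mathcal{O}(m,n)$: one must show that this functor is fully faithful on $\ext^{\leq 1}$ between standard modules. The cleanest route is to exhibit a tilting module $T \in \mathcal{O}(m,n)$ whose image under the Schur functor is a projective-injective generator of the $\Gamma$-block of $H_d^{p,q}\operatorname{-mod}$, mirroring the tilting/projective-injective object of Theorem~\ref{qh}(1); this forces $1$-faithfulness by a standard Rouquier-style argument. A secondary obstacle is the combinatorial bookkeeping: one must verify that the three labellings in play (integral weights of $\mathfrak{g}$, bipartitions of $d$ indexing the block of $H_d^{p,q}$, and weight diagrams in $\Gamma$) really are matched up compatibly by the Schur functor, and that the parameters $p,q$ attached to $\Gamma$ come out consistently on the $\mathcal{O}$ and Hecke sides.
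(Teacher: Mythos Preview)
Your approach is valid but takes a genuinely different route from the paper. The paper does \emph{not} invoke Rouquier's uniqueness theorem for covers at all; instead it computes the endomorphism algebra directly. Having fixed a block $\Gamma$ and chosen $p,q,d$ so that every weight in $\Gamma$ has the form $\bar\lambda$ for a bipartition $\lambda\Vdash d$ with $h(\lambda^L)\leq m$, $h(\lambda^R)\leq n$, the paper uses only the full faithfulness on projectives from Theorem~\ref{another} to get
\[
\End_{\mathfrak g}\!\Big(\bigoplus_\lambda P(\bar\lambda)\Big)^{\!\op}
\;\cong\;
\End_{H_d^{p,q}}\!\Big(\bigoplus_\lambda Y(\lambda)\Big)^{\!\op}
\;=\; e\,K_d^{p,q}\,e,
\]
the last equality by the very definition of $K_d^{p,q}$ in Theorem~\ref{qh}. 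Theorem~\ref{id} then identifies $K_d^{p,q}$ with the arc algebra $K_{\{\lambda\Vdash d\}}$, and a purely diagrammatic observation (relabelling the far-left vertices from $\times$ to $\circ$ does not affect any cup/cap combinatorics) gives $e\,K_d^{p,q}\,e \cong \bar e\,K(m,n)\,\bar e$. That is the whole argument; the equivalence (\ref{ark}) and the matching of $V(\lambda)$, $L(\lambda)$ follow formally once $\End_{\mathfrak g}^{fin}(P)^{\op}\cong K(m,n)$ is known with $e_\lambda$ corresponding to projection onto $P(\lambda)$.

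The trade-off is clear. The paper's route is shorter and sidesteps exactly the obstacle you identify: there is no need to check $1$-faithfulness of either Schur functor, nor to match standards under the two Schur functors, because the endomorphism algebra is computed outright. Your route is more conceptual and would generalise better to situations where one cannot compute the endomorphism algebra so explicitly, but here it introduces genuine extra work (the $1$-faithfulness verification and the matching $\pi V(\bar\lambda)\cong S(\lambda)$, which is not part of Theorem~\ref{another} as stated). Your ``secondary obstacle'' about the combinatorial bookkeeping is precisely what the paper's diagrammatic truncation $eK_d^{p,q}e\cong\bar e K(m,n)\bar e$ handles in one line.
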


The main idea for the proof of
Theorem~\ref{cato} is to 
exploit another Schur-Weyl duality which relates $\mathcal O(m,n)$ to the
level two Hecke algebras
$H_d^{p,q}$ from (\ref{ker}).
To formulate this, we fix integers $p,q \in \Z$ so that $p-m=q-n$.
Suppose $\lambda \Vdash d$ is a bipartition such that
$h(\lambda^L) \leq m$ and $h(\lambda^R) \leq n$, where $h(\mu)$ denotes
the 
{\em height} (number of non-zero parts) of a partition $\mu$.
Let $\bar{\lambda} \in \Lambda$ be the weight obtained by viewing
$\lambda$ as a weight diagram as in section 3, then changing
the labels of all the
vertices indexed by integers $\leq (p-m)$ from $\cross$ to $\circ$. For example, if $p \leq q$ then the empty
bipartition $\varnothing$ becomes the diagram
$$
\begin{picture}(200,22)
\put(-48,.25){$\bar{\varnothing} =
\:\:\,\cdots$}
\put(2,3){\line(1,0){223}}
\put(192.5,.3){$\circ$}
\put(171,-8.3){$_q$}
\put(101.5,13.3){$_p$}
\put(26.5,13.3){$_{p-m}$}
\put(28,-8.3){$_{q-n}$}
\put(215.5,.3){$\circ$}
\put(124.3,-1.6){$\up$}
\put(147.3,-1.6){$\up$}
\put(170.3,-1.6){$\up$}
\put(54.3,1){$\cross$}
\put(77.3,1){$\cross$}
\put(100.3,1){$\cross$}
\put(31.5,.3){$\circ$}
\put(8.5,.3){$\circ$}
\put(233,.5){$\cdots$}
\end{picture}
$$
\vspace{0.1mm}

\noindent
A key point is that $L(\bar{\varnothing})$ is an irreducible projective
module in $\mathcal O(m,n)$. Hence 
for $d \geq 0$ the module
$L(\bar{\varnothing}) \otimes V^{\otimes d}$ is projective
in $\mathcal O(m,n)$ too, where $V$ is the natural
$\mathfrak{g}$-module of column vectors.

\begin{Theorem}\label{another}
The algebra $H_d^{p,q}$ acts on the right on 
$L(\bar{\varnothing}) \otimes V^{\otimes d}$ so that
$s_r$ flips the $r$th and $(r+1)$th tensors in 
$V^{\otimes d}$ as usual, and $L_1$
acts as the endomorphism $\sum_{i,j=1}^{m+n} e_{i,j} \otimes e_{j,i} \otimes 1 \otimes\cdots\otimes 1$ (where $e_{i,j}$ denotes the $ij$-matrix unit in
$\mathfrak{g}$).
This action induces a {\em surjective} homomorphism
\begin{equation}
H_d^{p,q} \twoheadrightarrow \End_{\mathfrak{g}}(L(\bar{\varnothing}) \otimes V^{\otimes d})^{\op},
\end{equation}
which is
an isomorphism if and only if $d \leq \min(m,n)$.
Moreover the exact functor
\begin{equation}\label{thef}
\hom_{\mathfrak{g}}(L(\bar{\varnothing}) \otimes V^{\otimes d}, ?):\mathcal O(m,n) \rightarrow
H_d^{p,q}\operatorname{-mod}
\end{equation}
sends $P(\bar{\lambda})$ to $Y(\lambda)$ 
for all $\lambda \Vdash d$ with $h(\lambda^L) \leq m,
  h(\lambda^R) \leq n$, and it is fully faithful on the additive subcategory
of $\mathcal O(m,n)$ generated by these projective modules.
\end{Theorem}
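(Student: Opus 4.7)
The plan is to verify that the prescribed operators satisfy the defining relations of $H_d^{p,q}$ on $L(\bar{\varnothing}) \otimes V^{\otimes d}$, and then to identify both the endomorphism algebra and the image of the functor (\ref{thef}) by combining the resulting Schur--Weyl duality with the projective functor machinery underlying Theorem~\ref{qh}.

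First I would extend the action to the full degenerate affine Hecke algebra $\Haff_d$, setting $x_1 := \Omega_{01} = \sum_{i,j} e_{i,j} \otimes e_{j,i} \otimes 1^{\otimes(d-1)}$ and defining $x_{r+1} := s_r x_r s_r + s_r$ inductively. The symmetric-group part of the relations is immediate from the place-permutation action, $\mathfrak{g}$-linearity of each $x_r$ is automatic because $\Omega$ is an intertwiner, and the remaining mixed relations of $\Haff_d$ reduce to the classical identity $\Omega_{0,r+1} - s_r \Omega_{0,r} s_r = s_r$ in $U(\mathfrak{g})^{\otimes(d+1)}$. The substantial step is then to check that this $\Haff_d$-action factors through $H_d^{p,q}$, i.e. that $(x_1-p)(x_1-q)=0$ on $L(\bar{\varnothing}) \otimes V$. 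Because $L(\bar{\varnothing})$ is irreducible and projective, and $V$ is the natural module, $L(\bar{\varnothing}) \otimes V$ has at most two indecomposable summands, corresponding to the two addable boxes of the weight diagram $\bar{\varnothing}$, one at residue $p$ and one at residue $q$. On each summand $\Omega_{01}$ acts by a scalar that I would compute from $2\Omega_{01} = C_{01} - C_0 - C_1$ with $C$ the quadratic Casimir; substituting the eigenvalues of $C$ on $L(\bar{\varnothing})$, on $V$, and on the two candidate summands yields eigenvalues exactly $p$ and $q$.

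For the image of the functor, I would exploit that $V \otimes -$ is exact and preserves projectives, and decomposes according to generalized central characters as $\bigoplus_{i \in \Z} F_i$, where $F_i$ is the translation projective functor for adding a box of residue $i$. Iterating this decomposition and following the two addable residues at each step, one shows that $L(\bar{\varnothing}) \otimes V^{\otimes d}$ is a direct sum of modules $P(\bar{\lambda})$ for $\lambda \Vdash d$ with $h(\lambda^L) \leq m$ and $h(\lambda^R) \leq n$. The $F_i$'s on $\mathcal O(m,n)$ intertwine with the analogous $i$-restriction/induction functors on $H_d^{p,q}\operatorname{-mod}$ appearing in the construction of $Y(\lambda)$ via Theorem~\ref{qh}. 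Induction on $d$ with base case $\hom_{\mathfrak{g}}(L(\bar{\varnothing}), L(\bar{\varnothing})) = F = Y(\varnothing)$ and inductive step matching the $F_i$-filtrations on each side then gives $\hom_{\mathfrak{g}}(L(\bar{\varnothing}) \otimes V^{\otimes d}, P(\bar{\lambda})) \cong Y(\lambda)$ as $H_d^{p,q}$-modules.

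The surjectivity and the criterion for isomorphism come from a dimension count. By the first two steps the endomorphism algebra of $L(\bar{\varnothing}) \otimes V^{\otimes d}$ is a quotient of $H_d^{p,q}$, and by the third step its image under the functor (\ref{thef}) is identified with the endomorphism algebra of $\bigoplus_\lambda Y(\lambda)$ (summed over $\lambda \Vdash d$ with $h(\lambda^L)\leq m$, $h(\lambda^R)\leq n$), which by Theorem~\ref{qh}(2) has $H_d^{p,q}$ as its opposite double centralizer whenever all these $Y(\lambda)$ are nonzero. This forces the surjection onto the endomorphism algebra to be an isomorphism exactly when every $Y(\lambda)$ with $\lambda \Vdash d$ is realised, i.e.\ when $d \leq \min(m,n)$; for larger $d$ some $Y(\lambda)$ vanish and dimensions drop below $2^d d!$. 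Full faithfulness on the additive subcategory generated by the $P(\bar{\lambda})$'s is then a direct consequence of the same double centralizer property. The main obstacle is the Casimir computation in Step 2: one has to track the $\rho$-shift built into (\ref{ol}) carefully so that the two eigenvalues really line up with the parameters $p$ and $q$ fixed by $p-m=q-n$, since this is the one place where the Lie-theoretic normalization is tied to the combinatorial parametrization of the cyclotomic Hecke algebra.
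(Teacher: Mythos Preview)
Your sketch is correct and follows essentially the same route as the references the paper cites for this result: the paper itself does not prove Theorem~\ref{another} but defers to \cite{BKschur}, \cite{BKariki}, where the argument proceeds exactly via the Arakawa--Suzuki style action of $\Haff_d$ through the tensor Casimir $\Omega$, the computation of the two eigenvalues of $\Omega_{01}$ on $L(\bar\varnothing)\otimes V$ to obtain the cyclotomic relation, and the compatibility of the special projective functors $F_i$ on $\mathcal O(m,n)$ with $i$-induction on the Hecke side to identify $P(\bar\lambda)\mapsto Y(\lambda)$.

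One small point worth tightening: in your dimension argument you pass through ``the endomorphism algebra of $\bigoplus_\lambda Y(\lambda)$'', but this presupposes the full faithfulness you are trying to prove. The cleaner order (as in \cite{BKschur}) is first to observe that the functor \eqref{thef} sends $L(\bar\varnothing)\otimes V^{\otimes d}$ to the regular right module $H_d^{p,q}$ by Yoneda, which immediately gives full faithfulness on its summands and hence on all the $P(\bar\lambda)$; surjectivity of the map to $\End_{\mathfrak g}(L(\bar\varnothing)\otimes V^{\otimes d})^{\op}$ then follows, and the isomorphism criterion $d\le\min(m,n)$ drops out from matching $\dim H_d^{p,q}=2^d d!$ against the sum of $\dim Y(\lambda)$ over the truncated index set, exactly as you say.
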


To explain how to deduce Theorem~\ref{cato} 
from Theorem~\ref{another}, let $Y$, $H_d^{p,q}$ and $K_d^{p,q}$ be as in Theorem~\ref{qh}, and $K(m,n)$ be as in Theorem~\ref{cato}. Set
$P := \bigoplus_\la P(\bar{\lambda})$, 
$e := \sum_\la e_\lambda \in K_d^{p,q}$
and $\bar{e} := \sum_\lambda e_{\bar{\lambda}} \in K(m,n)$, all sums over
$\lambda \Vdash d$ such that $h(\lambda^L) \leq m, h(\lambda^R) \leq
n$.
It is obvious from Theorem~\ref{id} and the diagrammatic 
definition of the algebra $K(m,n)$
that $e K_d^{p,q} e \cong \bar e K(m,n) \bar e$.
Applying Theorem~\ref{another}
we get that
\begin{equation}\label{oldmethod}
\End_{\mathfrak{g}}(P)^{\op}
\cong 
\End_{H^{p,q}_d}\left( Y e\right)^{\op}
\cong  e K_d^{p,q} e
\cong \bar{e} K(m,n) \bar{e}.
\end{equation}
Theorem~\ref{cato} follows from this 
on
observing given any $\sim$-equivalence class
$\Gamma$ of weights 
from $\Lambda$ 
that we can choose $p,q$ and $d$ so that 
all weights in $\Gamma$ are of the form 
$\bar\lambda$ for $\lambda \Vdash d$ with $h(\lambda^L) \leq m,
h(\lambda^R)\leq n$.

\vspace{2mm}
\noindent
{\em Notes.}
For a detailed account of the general theory of parabolic category $\mathcal O$ for a
semisimple Lie algebra, see \cite[ch. 9]{Hbook}.

Theorem~\ref{another} is proved in \cite{BKschur}, \cite{BKariki}, and the
deduction of
Theorem~\ref{cato} following the argument just sketched can be found
in detail in \cite[$\S$8]{BS3}.
For the special case $m=n$, the identification of 
the principal block of $\mathcal{O}(m,n)$ with the 
corresponding block of the diagram algebra $K(m,n)$ was established
earlier by Stroppel \cite[Theorem 5.8.1]{S} using an explicit
presentation for the endomorphism algebra of a minimal projective
generator for the category of perverse sheaves on the Grassmannian
found by Braden in \cite{Br} (this category of perverse sheaves being
equivalent to the principal block of $\mathcal{O}(m,n)$ thanks to the
Beilinson-Bernstein localization theorem). The idea that there should
be such 
an isomorphism originates in unpublished work of Braden and Khovanov.

With Theorem~\ref{cato} in hand, 
all the statements of Theorem~\ref{nw} for $\Lambda$ as in (\ref{ol})
are equivalent to previously known facts about $\mathcal O(m,n)$.
In particular, the fact that blocks of $\mathcal O(m,n)$ are
Koszul coming from Theorem~\ref{nw}(2) is a very
special case of the general results of Beilinson, Ginzburg and Soergel
\cite{BGS} and Backelin \cite{Back}; the present approach is more
explicit and
algebraic in nature.
 The possibility of computing the composition multiplicities of
parabolic Verma modules 
in a geometry-free way
as in Theorem~\ref{nw}(1) 
was first realized by Enright
and Shelton in \cite{ES}. 

For an application of Theorem~\ref{cato} to classify the indecomposable
projective functors on $\mathcal O(m,n)$ in the sense of \cite{BG}, see \cite[Theorem
1.2]{BS3}.
The problem of classifying
indecomposable projective functors on parabolic category $\mathcal O$
for an arbitrary parabolic of an arbitrary semisimple Lie algebra
remains open in general.

\section{The general linear supergroup}

In this section we discuss the application to the representation
theory of the general linear supergroup $G := GL(m|n)$ over the ground field
$\C$. Using scheme-theoretic language, $G$ can be regarded as a functor
from the category of commutative superalgebras over $\C$
to the category of groups,
mapping a commutative superalgebra $A = A_{\0}\oplus A_{\1}$
to the group $G(A)$ of all  invertible $(m+n) \times (m+n)$
matrices of the form
\begin{equation}\label{supermat}
g=
\left(
\begin{array}{l|l}
a&b\\\hline
c&d
\end{array}
\right)
\end{equation}
where $a$ (resp. $d$) is an $m \times m$ (resp. $n \times n$)
matrix with entries in
$A_{\0}$, and $b$ (resp. $c$) is an $m\times n$ (resp. $n \times m$)
matrix with entries in
$A_{\1}$.
Let $B$ and $T$ be the standard choices of Borel subgroup\label{borels}
and maximal torus: for each commutative superalgebra $A$, the
groups
$B(A)$ and $T(A)$ consist of all
matrices $g \in G(A)$ that are upper triangular and diagonal, respectively.
Let $\eps_1,\dots,\eps_{m+n}$ be the usual basis for the
character group $X(T)$ of $T$, i.e. $\eps_r$ picks
out the $r$th diagonal entry of a diagonal matrix.
Equip $X(T)$ with a symmetric bilinear form $(.,.)$
such that $(\eps_r,\eps_s) = (-1)^{\bar r} \delta_{r,s}$, where
$\bar r := \0$ if $1 \leq r \leq m$ and $\bar r := \1$ if $m+1 \leq r \leq m+n$.
 Let
\begin{equation}
\label{XT}
\Lambda
:= \left\{
\la \in X(T)\:\bigg|\:
\begin{array}{c}
\:\:\,(\la+\rho,\eps_1) > \cdots > (\la+\rho,\eps_m),\\
(\la+\rho,\eps_{m+1}) < \cdots < (\la+\rho,\eps_{m+n})
\end{array}
\right\}
\end{equation}
denote the set of {\em dominant weights}, where
$\rho := -\eps_2-2\eps_3-\cdots-(m-1)\eps_m+(m-1) \eps_{m+1}+(m-2)\eps_{m+2}+\cdots+(m-n)\eps_{m+n}$.

We are interested here in the abelian category $\Rep{G}$ of finite dimensional
representations of $G$; 
we allow arbitrary (not necessarily even) morphisms between
$G$-modules so that the existence of
kernels and cokernels is not quite obvious.
The category $\Rep{G}$ is a highest weight category with irreducible
objects
$\{L(\lambda)\:|\:\lambda \in \Lambda\}$,
standard objects
$\{V(\lambda)\:|\:\lambda \in \Lambda\}$ and projective
indecomposables
$\{P(\lambda)\:|\:\lambda \in \Lambda\}$. In this setting the standard
objects are called {\em Kac modules} and they can be constructed by
geometric induction from $B$: we have that $V(\lambda) = H^0(G / B,
\mathscr L(\lambda)^*)^*$ where $G / B$ is Manin's flag superscheme
and $\mathscr L(\lambda)$ denotes the $G$-equivariant line bundle on
$G / B$ attached to the weight $\lambda$.

We identify $\lambda \in \Lambda$ with a weight
diagram obtained by putting the symbol $\down$ on vertices indexed by
the set
$\{(\lambda+\rho,\eps_1),\dots,(\lambda+\rho,\eps_m)\}$ and the symbol
$\up$ on all vertices
indexed by the set $\Z \setminus \{(\lambda+\rho,\eps_{m+1}),\dots,(\lambda+\rho,\eps_{m+n})\}$,
writing $\cross$ for both and $\circ$ for neither as usual.
Unlike the situations considered in sections 3 and 5, the
non-trivial $\sim$-equivalence classes in $\Lambda$ are all infinite,
and all but finitely many vertices\footnote{The reader concerned by the apparent lack of symmetry here 
should note that we have already made a choice earlier in defining the
parities $\bar r\:\:(1 \leq r \leq m+n)$. We could also have set
$\bar r := \1$ for $1 \leq r \leq m$ and $\bar r := \0$ for $m+1 \leq r \leq m+n$,
a path which leads to weight diagrams
in which all but finitely many vertices are labelled $\down$.}
 are labelled $\up$.
Let $K(m|n)$ be the arc algebra $K_\La$ from section 4 for this new choice of $\La$.

\begin{Theorem}\label{miss}
Let $P := \bigoplus_{\lambda \in \Lambda} P(\lambda)$. The locally
finite endomorphism algebra $\End^{fin}_G(P)^{\op}$ is isomorphic to 
the arc algebra $K(m|n)$ so that $e_\lambda \in K(m|n)$
corresponds to the projection onto $P(\lambda)$. Fixing such an isomorphism, the functor
\begin{equation}\label{bere}
\hom_G(P, ?):\Rep{G} \rightarrow K(m|n)\operatorname{-mod}
\end{equation}
is an
equivalence of categories sending $P(\lambda), V(\lambda), L(\lambda) 
\in \Rep{G}$
to the $K(m|n)$-modules with the same name.
\end{Theorem}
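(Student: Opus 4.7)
The plan is to imitate the proof of Theorem~\ref{cato} almost verbatim, replacing parabolic category $\mathcal O$ with $\Rep{G}$ and the irreducible projective $L(\bar\varnothing)$ with a suitably chosen projective $G$-module $E^{p,q}$, then invoking a second Schur-Weyl duality between $H_d^{p,q}$ and $GL(m|n)$ to relate endomorphism algebras block by block.

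First, for each pair $(p,q)$ of integers satisfying a compatibility condition relative to $(m,n)$, I would identify a family of ``base'' $G$-modules $E^{p,q}$ such that $E^{p,q} \otimes V^{\otimes d}$ is projective in $\Rep{G}$ for every $d \geq 0$, where $V$ is the natural $G$-supermodule. The candidate is a one-dimensional (or minimal-dimension) tensor product of powers of the Berezinian with an irreducible polynomial module, chosen so that translating it by tensoring with $V$ yields all the projective covers $P(\bar\lambda)$ for $\lambda \Vdash d$ with $h(\lambda^L) \leq m$, $h(\lambda^R) \leq n$, where $\bar\lambda$ is the weight of $\Lambda$ obtained from $\lambda$ by the combinatorial recipe of section~6.

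Second, I would establish the supergroup analogue of Theorem~\ref{another}: the algebra $H_d^{p,q}$ acts on the right on $E^{p,q} \otimes V^{\otimes d}$ with $s_r$ acting as the super-swap (with appropriate Koszul sign) on the $r$th and $(r+1)$th tensor factors and $L_1$ acting as the image of the supersymmetric analogue of the Casimir tensor $\sum_{i,j} (-1)^{\bar j} e_{i,j} \otimes e_{j,i} \otimes 1 \otimes \cdots \otimes 1$; this action should induce a surjective homomorphism
\[
H_d^{p,q} \twoheadrightarrow \End_G\bigl(E^{p,q}\otimes V^{\otimes d}\bigr)^{\op}
\]
which is an isomorphism in a stable range, and the resulting exact Schur functor $\hom_G(E^{p,q}\otimes V^{\otimes d},?):\Rep G \to H_d^{p,q}\operatorname{-mod}$ should send $P(\bar\lambda) \mapsto Y(\lambda)$ and be fully faithful on the additive subcategory generated by these projectives.

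Third, granting this duality I would run the argument (\ref{oldmethod}) blockwise. Given any $\sim$-equivalence class $\Gamma \subseteq \Lambda$ (which is now infinite!), only finitely many of its elements carry a $\down$ in any given finite window; consequently one can choose $p,q,d$ so that $\Gamma$ is realized inside $\{\bar\lambda : \lambda \Vdash d,\ h(\lambda^L) \leq m,\ h(\lambda^R)\leq n\}$. Let $P_\Gamma := \bigoplus_{\lambda \in \Gamma} P(\lambda)$, and let $e_\Gamma \in K_d^{p,q}$ and $\bar e_\Gamma \in K(m|n)$ be the corresponding central idempotents. Then Theorem~\ref{id} plus the diagrammatic definition of $K(m|n)$ give $e_\Gamma K_d^{p,q} e_\Gamma \cong \bar e_\Gamma K(m|n) \bar e_\Gamma$, and the Schur functor above together with Theorem~\ref{qh} yields
\[
\End_G(P_\Gamma)^{\op} \;\cong\; \End_{H^{p,q}_d}(Ye_\Gamma)^{\op} \;\cong\; e_\Gamma K_d^{p,q} e_\Gamma \;\cong\; \bar e_\Gamma K(m|n)\bar e_\Gamma.
\]
Assembling across blocks gives $\End_G^{fin}(P)^{\op} \cong K(m|n)$ as required, and the categorical equivalence (\ref{bere}) follows from the standard fact that $P$ is a projective generator of $\Rep G$ (up to the evident issue of working locally unitally).

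The main obstacle is step two: constructing the correct module $E^{p,q}$ and proving the Schur-Weyl duality with $H_d^{p,q}$. In the parabolic category $\mathcal O$ setting this duality was extracted from the affine/degenerate Hecke algebra machinery of \cite{BKschur,BKariki} via translation functors, but no such framework is directly available for $GL(m|n)$; one must instead verify the quadratic relation $(L_1-\xi^p)(L_1-\xi^q)=0$ by a direct eigenvalue computation on the weights of $E^{p,q}\otimes V$ (using the bilinear form of signature $(m,-n)$), and establish full faithfulness by exhibiting enough morphisms between tensor products to account for the diagrammatic basis of $K(m|n)$. A secondary subtlety is that $\Lambda$ here has infinite $\sim$-classes, so every statement must be interpreted locally: one verifies the isomorphism on each finite sub-block $\Gamma$ separately and observes compatibility under enlarging $\Gamma$, which is harmless because both sides are honest locally unital algebras.
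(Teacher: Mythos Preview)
Your overall strategy matches the paper's, but there is a genuine gap in step three, and the paper itself flags exactly this point. You write that for any $\sim$-equivalence class $\Gamma \subseteq \Lambda$ ``one can choose $p,q,d$ so that $\Gamma$ is realized inside $\{\bar\lambda : \lambda \Vdash d,\ h(\lambda^L) \leq m,\ h(\lambda^R)\leq n\}$.'' This is false: in the $GL(m|n)$ setting the non-trivial $\sim$-classes are \emph{infinite}, while for fixed $d$ the set of bipartitions of $d$ is finite. So no single choice of $(p,q,d)$ captures an entire block. (You also have the wrong constraints: in the paper's Theorem~\ref{another2} the functor identifies $P(\hat\lambda)$ with $Y(\lambda)$ only for \emph{restricted} $\lambda$ satisfying both height and width bounds $h(\lambda^L)\leq m$, $w(\lambda^L)\leq n+q-p$, $h(\lambda^R)\leq m+q-p$, $w(\lambda^R)\leq n$; height alone is not enough.)

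The paper runs the analogue of (\ref{oldmethod}) and concludes only that for every \emph{finite} set of weights $\Gamma$ in a single $\sim$-class one can compute $\End_G\bigl(\bigoplus_{\lambda\in\Gamma}P(\lambda)\bigr)^{\op}$, then explicitly remarks that ``this argument is no longer quite a proof.'' What is actually needed is a careful inverse-limit argument as $p\to-\infty$ and $q\to\infty$, showing that the isomorphisms for increasing finite $\Gamma$ are compatible and assemble to an isomorphism of locally unital algebras. Your closing sentence (``observes compatibility under enlarging $\Gamma$, which is harmless'') both contradicts your own step three and underestimates the work: the compatibility has to be checked, and this is precisely what is carried out in \cite[Lemmas 5.8--5.9]{BS4}.
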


Again the proof involves a Schur-Weyl duality, though it is a bit more
subtle than in the previous section due to the existence of infinite
$\sim$-equivalence classes of weights in $\Lambda$.
To formulate the key result, we fix integers $p \leq q$.
Suppose $\lambda \Vdash d$ is a bipartition such that
$h(\lambda^L) \leq m, w(\lambda^L) \leq
n+q-p, h(\lambda^R) \leq m+q-p, w(\lambda^R) \leq n$, where $h(\mu)$ denotes height and
$w(\mu)$ denotes
the 
{\em width} (largest part) of a partition $\mu$.
Let $\hat{\lambda} \in \Lambda$ be the weight obtained by viewing
$\lambda$ as a weight diagram as in section 3, then changing the labels of
all the
vertices indexed by integers $\leq (p-m)$ 
from $\cross$ to $\up$ and all the ones indexed by integers $> (q+n)$
from $\circ$ to $\up$.
For example the empty
bipartition $\varnothing$ becomes
$$
\begin{picture}(-365,25)
\put(-222,14){$_p$}
\put(-153.3,14){$_q$}
\put(-294,14){$_{p-m}$}
\put(-90,14){$_{q+n}$}
\put(-363,-1.5){$\hat{\varnothing} \:=\:\:\cdots$}
\put(-22,-1.5){$\cdots$}
\put(-313,1.2){\line(1,0){283}}
\put(-288.7,-3.3){$\scriptstyle\up$}
\put(-311.7,-3.3){$\scriptstyle\up$}
\put(-153.7,-3.3){$\scriptstyle\up$}
\put(-176.7,-3.3){$\scriptstyle\up$}
\put(-199.7,-3.3){$\scriptstyle\up$}
\put(-269.2,-0.6){$\scriptstyle\times$}
\put(-246.2,-0.6){$\scriptstyle\times$}
\put(-223.2,-0.6){$\scriptstyle\times$}
\put(-131.4,-1.4){$\circ$}
\put(-108.4,-1.4){$\circ$}
\put(-85.4,-1.4){$\circ$}
\put(-61.7,-3.3){$\scriptstyle\up$}
\put(-38.7,-3.3){$\scriptstyle\up$}
\end{picture}
$$
\vspace{0.1mm}

\noindent
Again $L(\hat{\varnothing})$ is an irreducible projective
module in $\Rep{G}$. Hence 
for $d \geq 0$ the module
$L(\hat{\varnothing}) \otimes V^{\otimes d}$ is projective
in $\Rep{G}$ too, where $V$ is the natural
$G$-module of column vectors
with standard basis $v_1,\dots,v_m,v_{m+1},\dots,v_{m+n}$
and $\Z_2$-grading defined by putting $v_r$ in degree 
$\bar r$.

\begin{Theorem}\label{another2}
The algebra $H_d^{p,q}$ acts on the right on 
$L(\hat{\varnothing}) \otimes V^{\otimes d}$ so that
$s_r$ flips the $r$th and $(r+1)$th tensors in 
$V^{\otimes d}$ with a sign if both vectors are odd, and $L_1$
acts as the endomorphism $\sum_{i,j=1}^{m+n} (-1)^{\bar j}e_{i,j} \otimes e_{j,i} \otimes 1 \otimes\cdots\otimes 1$ (where $e_{i,j}$ denotes the $ij$-matrix unit in
the Lie superalgebra of $G$).
This action induces a {\em surjective} homomorphism
\begin{equation}\label{newiso}
H_d^{p,q} \twoheadrightarrow \End_G(L(\hat{\varnothing}) \otimes V^{\otimes d})^{\op},
\end{equation}
which is
an isomorphism if and only if $d \leq \min(m,n) + q-p$.
Moreover the exact functor
\begin{equation}\label{theeef}
\hom_{\mathfrak{g}}(L(\hat{\varnothing}) \otimes V^{\otimes d}, ?):
\Rep{G} \rightarrow
H_d^{p,q}\operatorname{-mod}
\end{equation}
sends $P(\hat{\lambda})$ to $Y(\lambda)$ 
for all {\em restricted} $\lambda \Vdash d$ with $h(\lambda^L) \leq m,
w(\lambda^L) \leq n+q-p,
h(\lambda^R) \leq m+q-p,
  w(\lambda^R) \leq n$, and it is fully faithful on the additive subcategory
of $\Rep{G}$ generated by these projective modules.
\end{Theorem}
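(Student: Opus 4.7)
The plan is to follow the strategy used for Theorem~\ref{another}, adapted to the supergroup setting. The first step is to verify that the displayed formulae define a right action of $H_d^{p,q}$ on $L(\hat{\varnothing})\otimes V^{\otimes d}$. Sergeev's super Schur-Weyl duality provides a commuting right action of $S_d$ on $V^{\otimes d}$ via super-permutations, so $s_r$ acting on the last $d$ factors is automatically $G$-linear. The operator $L_1$ is given by the super-Casimir $\Omega := \sum_{i,j}(-1)^{\bar j} e_{ij}\otimes e_{ji}$ acting on the first two tensor slots; $G$-invariance of $\Omega$ makes $L_1$ a $G$-equivariant endomorphism. The further $L_r$'s are determined by the recursion $L_r = s_{r-1} L_{r-1} s_{r-1} + s_{r-1}$, and a routine calculation parallel to \cite{BKschur} verifies the defining relations of the degenerate affine Hecke algebra.

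For the cyclotomic relation $(L_1-p)(L_1-q) = 0$, I would decompose $L(\hat{\varnothing}) \otimes V$ as a $G$-module by reading off the admissible moves on the weight diagram of $\hat{\varnothing}$: the only positions at which a label change can occur when tensoring with $V$ are the right end of the block of $\cross$'s (residue $p$) and the region occupied by the block of $\circ$'s (residue $q$). On the two resulting families of indecomposable summands $\Omega$ acts as the scalar $p$ or $q$ respectively, as one computes by evaluation on a highest weight vector; since only these two eigenvalues occur, the cyclotomic relation follows.

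The surjectivity assertion then reduces, via super Schur-Weyl duality applied to $V^{\otimes d}$, to the fact that $\End_G(L(\hat\varnothing)\otimes V^{\otimes d})$ is generated by super-permutations on the last $d$ factors together with the operator $\Omega$, because the irreducible projective $L(\hat\varnothing)$ in the first slot contributes nothing extra. The isomorphism criterion $d \leq \min(m,n)+q-p$ follows by a dimension comparison: in this range every bipartition $\lambda$ satisfying the height-width bounds in the theorem yields a genuinely distinct dominant weight $\hat\lambda$, and both algebras have the same dimension $2^d d!$; outside this range some $\hat\lambda$'s collide or fail to be dominant, so the endomorphism algebra shrinks strictly.

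The main obstacle is identifying $P(\hat\lambda)$ with $Y(\lambda)$ for restricted $\lambda$ and deducing the fully faithful property. My approach is to introduce endofunctors $F_i$ on $\Rep{G}$ (projecting $(-)\otimes V$ onto the generalized $i$-eigenspace of $L_1$) and analogous functors on $H_d^{p,q}\operatorname{-mod}$, and to check that they intertwine under the functor (\ref{theeef}); this amounts to matching two categorifications of the basic $\mathfrak{sl}_\infty$-module of highest weight $\La_p+\La_q$. Iterated application of the $F_i$'s to $L(\hat\varnothing)$ then realises the projective covers $P(\hat\lambda)$ for exactly the restricted $\lambda$, while the same sequence applied to the trivial $H_0^{p,q}$-module builds the Young modules $Y(\lambda)$ in view of the description at the end of Section~3. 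Once this identification is in place, the fully faithful statement follows from a double-centralizer argument using Theorem~\ref{qh}(2): on the additive subcategory generated by $\{P(\hat\lambda)\}$, both endomorphism algebras are isomorphic to $eK_d^{p,q}e$ (where $e=\sum_\lambda e_\lambda$ ranges over the relevant bipartitions), and the map between them is already known to be surjective.
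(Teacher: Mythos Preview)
Your overall architecture --- check the degenerate affine Hecke relations via the super--Casimir $\Omega$, verify the cyclotomic relation by analysing $L(\hat\varnothing)\otimes V$, then use the special projective functors $F_i$ to identify $P(\hat\lambda)$ with $Y(\lambda)$ --- matches the route taken in \cite[\S3]{BS4}, which is what the paper cites for this result. Two of your steps, however, do not stand up as written.

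\textbf{Surjectivity.} Your reduction to Sergeev's duality on $V^{\otimes d}$ together with the remark that ``$L(\hat\varnothing)$ in the first slot contributes nothing extra'' is not a proof. The module $L(\hat\varnothing)$ is a typical Kac module, in general of large dimension, and $\End_G(L(\hat\varnothing)\otimes V^{\otimes d})$ is genuinely larger than the image of $\C S_d$; the Jucys--Murphy operators $L_r$ are essential, not redundant. In \cite{BS4} surjectivity is obtained only after the summands of $T:=L(\hat\varnothing)\otimes V^{\otimes d}$ have been identified via the $F_i$'s: once one knows $T\in\operatorname{add}\!\big(\bigoplus_\lambda P(\hat\lambda)\big)$ and that the functor is fully faithful on $\operatorname{add}(T)$, one gets $\End_G(T)^{\op}\cong\End_{H_d^{p,q}}(\pi T)^{\op}$, and surjectivity follows because $\pi T$ is a sum of Young modules and the action of $H_d^{p,q}$ on such a sum is already known to generate all endomorphisms by Theorem~\ref{qh}. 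So the logical order is the reverse of yours.

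\textbf{Full faithfulness.} Your argument here is circular: you assert that $\End_G\big(\bigoplus P(\hat\lambda)\big)^{\op}\cong eK_d^{p,q}e$, but this isomorphism is exactly what full faithfulness would give you, and you have no independent computation of the left-hand side at this point (that computation is the content of Theorem~\ref{miss}, which is \emph{deduced from} the present theorem). The correct argument is much simpler and does not need Theorem~\ref{qh}(2): for any projective $T$, the functor $\hom_G(T,-)$ is automatically fully faithful on $\operatorname{add}(T)$, with values in $\End_G(T)^{\op}$-mod, hence in $H_d^{p,q}$-mod by inflation. Since your $F_i$-construction already exhibits each $P(\hat\lambda)$ as a summand of $T$, full faithfulness on the stated subcategory is immediate.
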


If we mimic (\ref{oldmethod})
with $P := \bigoplus_\la P(\hat{\lambda})$, 
$e := \sum_\la e_\lambda \in K_d^{p,q}$
 and $\hat e := \sum_\la e_{\hat\lambda} \in K(m|n)$, all sums over restricted
$\lambda \Vdash d$ such that $h(\lambda^L) \leq m, w(\lambda^L) \leq
n+q-p,
h(\lambda^R) \leq m+q-p,w(\lambda^R) \leq n$,
we get that
\begin{equation}\label{oldmethod2}
\End_{G}(P)^{\op}
\cong
\End_{H_d^{p,q}}(Y e)^{\op}
\cong 
 e K_d^{p,q} e
\cong \hat e K(m|n) \hat e.
\end{equation}
Given any {\em finite} set $\Gamma$ of weights 
from the same $\sim$-equivalence class in $\Lambda$,
there exist $p \leq q$ and $d$ such that all the weights
in $\Gamma$ are of the form $\hat{\lambda}$ for restricted $\lambda \Vdash d$
with $h(\lambda^L)\leq m,w(\lambda^L) \leq n+q-p,h(\lambda^R)\leq m+q-p,w(\lambda^R) \leq n$. So the endomorphism
algebra of $\bigoplus_{\lambda \in \Gamma} P(\lambda)$ can be worked
out from (\ref{oldmethod2}). This should at least make
Theorem~\ref{miss} rather plausible although this argument is no
longer quite a proof.

\vspace{2mm}
\noindent
{\em Notes.}
The curious observation that $\Rep{G}$ (with not necessarily homogeneous morphisms) 
is abelian is made in \cite[$\S$2.5]{CL}. In \cite{BS4} we 
worked instead in a certain full subcategory
$\mathscr F(m|n)$ of $\Rep{G}$ which is obviously abelian.
Since every $M \in \Rep{G}$ is isomorphic via a not necessarily homogeneous isomorphism to an object in $\mathscr F(m|n)$ it follows that 
$\Rep{G}$ is abelian too.
The fact that $\Rep{G}$ is a highest weight category is established in 
\cite[Theorem 4.47]{B}.

Theorem~\ref{miss} is proved in \cite[Theorem 1.1]{BS4} (see also
\cite[Lemmas 5.8--5.9]{BS4}) by carefully
taking a limit as $p \rightarrow -\infty$ and $q \rightarrow \infty$.
Combined also with Theorem~\ref{nw}, it has several consequences for the
structure of $\Rep{G}$. In particular using Theorem~\ref{nw}(2), we
get that the category $\Rep{G}$ possesses a hidden {Koszul grading}
in the spirit of \cite{BGS}.

From Theorem~\ref{nw}(1), we recover
the following formula proved originally in \cite{B} for the composition multiplicities of Kac
modules:
\begin{equation}\label{dec}
[V(\lambda):L(\mu)] = \left\{
\begin{array}{ll}
1&\text{if $\lambda \supset \mu$,}\\
0&\text{otherwise.}
\end{array}
\right.
\end{equation}
In this setting, the Kazhdan-Lusztig polynomials from
Theorem~\ref{nw}(2) were introduced originally by Serganova, motivated
by the observation that
\begin{equation}\label{chform}
\operatorname{ch} L(\mu) = \sum_{\lambda\leq\mu} p_{\lambda,\mu}(-1)
\operatorname{ch} K(\lambda).
\end{equation}
Using geometric induction techniques, Serganova 
computed the characters of the finite dimensional irreducible
$G$-modules
already in \cite{Se}.
Recently Musson and Serganova \cite{MS} have explained the connection
between the alternating sum formula for the composition multiplicities
$[V(\lambda):L(\mu)]$ arising from Serganova's original work and the formula
(\ref{dec}) from \cite{B} in purely combinatorial terms.

For atypical $\lambda$, the character formula (\ref{chform}) involves an infinite sum so
does not obviously imply a dimension formula for the irreducible $G$-modules,
but Su and Zhang were able to make some simplifications to
deduce such a result; see \cite{SZ}.
Theorem~\ref{nw}(3) gives a BGG-type resolution for certain irreducible
$G$-modules, including all {\em polynomial representations} for which this 
was established already by Cheng, Kwon and
Lam \cite{CKL}.
Finally we mention that on combining Theorems~\ref{cato} and \ref{miss}, one
can prove the ``super duality'' conjecture of Cheng, Wang and Zhang
\cite{CWZ}; a more direct proof was found subsequently in \cite{CL}.

Theorem~\ref{another2} is established in \cite[$\S$3]{BS4}. The
precise bound on $d$ for (\ref{newiso}) to be an isomorphism 
is a new observation; it follows from dimension considerations
similar to \cite[Theorem 3.9]{BS4}.
The coincidence between the composition multiplicities of Kac modules
from (\ref{dec}) and of Specht modules from Theorem~\ref{cm} was first
pointed out by Leclerc and Miyachi in \cite{LM}, and is nicely explained
by the Schur-Weyl duality in 
Theorem~\ref{another2}.

\ifbrauer@
\section{The walled Brauer algebra}

In this section we formulate an even more recent result
which explains some striking combinatorial coincidences observed recently by Cox and De Visscher.
These coincidences suggest a functorial link
between representations of the walled Brauer algebra $B_{r,s}(\delta)$ and of the generalised Khovanov algebra $K_\Lambda$ in a situation in which weights in $\Lambda$ have infinitely many vertices labelled $\up$ and infinitely many vertices labelled $\down$, the one situation in which we did not know of an occurence of $K_\Lambda$ ``in nature'' before.

Fix a parameter $\delta \in \C$. The walled Brauer algebra $B_{r,s}(\delta)$ is a certain subalgebra of the classical Brauer algebra $B_{r+s}(\delta)$. As a $\C$-vector space it has
dimension $(r+s)!$, with
a basis consisting of isotopy classes of diagrams
drawn in a rectangle with $(r+s)$ vertices on its top and bottom edges,
and a vertical wall separating the leftmost $r$ from the rightmost $s$ vertices.
Each vertex must be connected
to exactly one other vertex by a smooth curve drawn in the interior of rectangle, connected pairs of vertices on opposite edges
must lie on the same side of the wall, and connected pairs of vertices on the same edge must lie on opposite sides of the wall.
For example here are two 
basis vectors in $B_{2,2}(\delta)$:
$$
\begin{picture}(150,72)
\put(-24,34){$\alpha=$}
\put(8,4.2){$\scriptstyle\bullet$}
\put(28,4.2){$\scriptstyle\bullet$}
\put(48,4.2){$\scriptstyle\bullet$}
\put(68,4.2){$\scriptstyle\bullet$}
\put(8,64.2){$\scriptstyle\bullet$}
\put(28,64.2){$\scriptstyle\bullet$}
\put(48,64.2){$\scriptstyle\bullet$}
\put(68,64.2){$\scriptstyle\bullet$}
\put(0,6){\line(1,0){80}}
\put(0,6){\line(0,1){60}}
\put(0,66){\line(1,0){80}}
\put(80,66){\line(0,-1){60}}
\dashline{3}(40,6)(40,66)
\put(30,6){\line(-1,3){20}}
\put(70,6){\line(0,1){60}}
\put(30.3,6){\oval(40,30)[t]}
\put(40.3,66){\oval(20,20)[b]}
\end{picture}
\begin{picture}(80,72)
\put(-24,34){$\beta=$}
\put(8,4.2){$\scriptstyle\bullet$}
\put(28,4.2){$\scriptstyle\bullet$}
\put(48,4.2){$\scriptstyle\bullet$}
\put(68,4.2){$\scriptstyle\bullet$}
\put(8,64.2){$\scriptstyle\bullet$}
\put(28,64.2){$\scriptstyle\bullet$}
\put(48,64.2){$\scriptstyle\bullet$}
\put(68,64.2){$\scriptstyle\bullet$}
\put(0,6){\line(1,0){80}}
\put(0,6){\line(0,1){60}}
\put(0,66){\line(1,0){80}}
\put(80,66){\line(0,-1){60}}
\dashline{3}(40,6)(40,66)
\put(10,6){\line(1,3){20}}
\put(50,6){\line(1,3){20}}
\put(50.3,6){\oval(40,30)[t]}
\put(30.3,66){\oval(40,30)[b]}
\end{picture}
$$
Multiplication is by concatenation of diagrams, so $\alpha\beta$ is obtained by putting $\alpha$ under $\beta$, interpreted as a basis vector by 
erasing closed circles in the interior of resulting diagram and multiplying 
by the scale factor
$\delta$ each time such a circle is removed.
For example, for $\alpha$ and $\beta$ as above, we have:
$$
\begin{picture}(150,72)
\put(-30,34){$\alpha\beta=$}
\put(8,4.2){$\scriptstyle\bullet$}
\put(28,4.2){$\scriptstyle\bullet$}
\put(48,4.2){$\scriptstyle\bullet$}
\put(68,4.2){$\scriptstyle\bullet$}
\put(8,64.2){$\scriptstyle\bullet$}
\put(28,64.2){$\scriptstyle\bullet$}
\put(48,64.2){$\scriptstyle\bullet$}
\put(68,64.2){$\scriptstyle\bullet$}
\put(0,6){\line(1,0){80}}
\put(0,6){\line(0,1){60}}
\put(0,66){\line(1,0){80}}
\put(80,66){\line(0,-1){60}}
\dashline{3}(40,6)(40,66)
\put(70,6){\line(0,1){60}}
\put(30,6){\line(0,1){60}}
\put(30.3,6){\oval(40,30)[t]}
\put(30.3,66){\oval(40,30)[b]}
\end{picture}
\begin{picture}(80,72)
\put(-39,34){$\beta\alpha=\delta \cdot $}
\put(8,4.2){$\scriptstyle\bullet$}
\put(28,4.2){$\scriptstyle\bullet$}
\put(48,4.2){$\scriptstyle\bullet$}
\put(68,4.2){$\scriptstyle\bullet$}
\put(8,64.2){$\scriptstyle\bullet$}
\put(28,64.2){$\scriptstyle\bullet$}
\put(48,64.2){$\scriptstyle\bullet$}
\put(68,64.2){$\scriptstyle\bullet$}
\put(0,6){\line(1,0){80}}
\put(0,6){\line(0,1){60}}
\put(0,66){\line(1,0){80}}
\put(80,66){\line(0,-1){60}}
\dashline{3}(40,6)(40,66)
\put(10,6){\line(0,1){60}}
\put(50,6){\line(1,3){20}}
\put(50.3,6){\oval(40,30)[t]}
\put(40.3,66){\oval(20,20)[b]}
\end{picture}
$$
The algebra $B_{r,s}(\delta)$ is semisimple whenever
$\delta \notin \{2-r-s,3-r-s,\dots,r+s-2\}$. 
Introduce two more sets of bipartitions:
\begin{align}
\Lambda_{r,s} &:= \left\{\lambda =(\lambda^L,\lambda^R)\:\big|\:\lambda^L\vdash r-t, \lambda^R\vdash s-t,0 \leq t \leq \min(r,s)\right\},\\
\dot\Lambda_{r,s} &:= \left\{\begin{array}{ll}
\Lambda_{r,s}&\text{if $r \neq s$ or $\delta \neq 0$ or $r=s=0$,}\\
\Lambda_{r,s}\setminus\{(\varnothing,\varnothing)\}&\text{otherwise.}
\end{array}\right.\label{wed}
\end{align}
Then the isomorphism classes of irreducible $B_{r,s}(\delta)$-modules are parametrised in a canonical way by the set $\dot\Lambda_{r,s}$; we write
$D_{r,s}(\lambda)$ for the irreducible corresponding to $\lambda \in \dot \Lambda_{r,s}$.

We assume henceforth that $\delta\in \Z$
and identify bipartitions 
with certain weight diagrams, so that 
$\lambda = (\lambda^L,\lambda^R)$ corresponds to 
the weight diagram in which
the vertices $\lambda^L_1,\lambda^L_2-1,\lambda^L_3-2,\dots$ are labelled $\up$
and the vertices 
$1-\delta-\lambda^R_1, 2-\delta-\lambda^R_2,3-\delta-\lambda^R_3,\dots$
are labelled $\down$. This is a different rule from the one in section 3.
For example, if $\delta=-2$ then
\begin{align*}
(\varnothing,\varnothing)&=
\!\!\!\begin{picture}(240,12)
\put(90,14){$_0$}
\put(126,14){$_{-\delta}$}
\put(8,-.3){$\cdots$}
\put(227,-.3){$\cdots$}
\put(25,2.3){\line(1,0){198}}
\put(30,-2.4){$\up$}
\put(50,-2.4){$\up$}
\put(70,-2.4){$\up$}
\put(90,-2.4){$\up$}
\put(130,-.4){$\circ$}
\put(110,-.4){$\circ$}
\put(150,2.4){$\down$}
\put(170,2.4){$\down$}
\put(190,2.4){$\down$}
\put(210,2.4){$\down$}
\end{picture}\,,
\\ 
((2^21),(32))&=
\!\!\!\begin{picture}(240,0)
\put(8,-.3){$\cdots$}
\put(227,-.3){$\cdots$}
\put(25,2.3){\line(1,0){198}}
\put(30,-2.4){$\up$}
\put(70,-2.4){$\up$}
\put(110,-2.4){$\up$}
\put(50,-.4){$\circ$}
\put(130,.4){$\cross$}
\put(150,-.4){$\circ$}
\put(170,-.4){$\circ$}
\put(90,2.4){$\down$}
\put(190,2.4){$\down$}
\put(210,2.4){$\down$}
\end{picture}\,.
\end{align*}
Note now there are always infinitely many vertices labelled $\up$ to the left 
and infinitely many vertices labelled $\down$ to the right.
Let $\Lambda$ denote the set of all weight diagrams arising from all bipartitions in this way. Let $K(\delta)$ be the arc algebra $K_\La$ from section 4 for this choice
of $\Lambda$, and denote its irreducible modules by $L(\lambda)$ for $\lambda \in \Lambda$.

\begin{Theorem}\label{fin}
For $\delta \in \Z$,
there is a Morita equivalence between the walled Brauer algebra $B_{r,s}(\delta)$ and the finite dimensional 
 algebra $K_{r,s}(\delta) := e_{r,s} K(\delta) e_{r,s}$, where
$$
e_{r,s} := \sum_{\lambda \in \dot\Lambda_{r,s}} e_\lambda\in K(\delta).
$$
Under the equivalence, the irreducible $B_{r,s}(\delta)$-module
$D_{r,s}(\lambda)$ corresponds to the irreducible $K_{r,s}(\delta)$-module
$L_{r,s}(\lambda) := e_{r,s} L(\lambda)$, for $\lambda \in \dot \Lambda_{r,s}$.
\end{Theorem}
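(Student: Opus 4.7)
The plan is to deduce this Morita equivalence from mixed Schur-Weyl duality between the walled Brauer algebra and $G = GL(m|n)$ with $m - n = \delta$, then transfer endomorphism algebras through Theorem~\ref{miss}. First, I would fix $m, n$ with $m - n = \delta$ and $m + n$ large compared to $r + s$. The walled Brauer algebra acts on the mixed tensor space $T^{r,s} := V^{\otimes r} \otimes (V^*)^{\otimes s}$ in the standard way (with walled Brauer diagrams sending $V$-factors to $V$-factors, $V^*$-factors to $V^*$-factors, and horizontal arcs to the evaluation and coevaluation maps between $V$ and $V^*$). Mixed Schur-Weyl duality for the general linear supergroup, in the form due to Berele-Regev, Moon and Sergeev, then supplies an algebra isomorphism $B_{r,s}(\delta) \cong \End_G(T^{r,s})^{\op}$ in this stable range.

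The next step is to verify that $T^{r,s}$ is a projective object of $\Rep{G}$ and identify its indecomposable summands. Projectivity follows because $V \otimes -$ and $V^* \otimes -$ are biadjoint exact endofunctors of $\Rep{G}$, hence preserve projectivity; starting from the projective irreducible $L(\hat{\varnothing})$ of Theorem~\ref{another2} and applying a suitable Berezinian twist, one realizes $T^{r,s}$ as a summand of a module already known to be projective. Computing characters via (\ref{dec}) and the standard combinatorics of tensor products of Kac modules should then yield a decomposition $T^{r,s} \cong \bigoplus_{\lambda \in \dot\Lambda_{r,s}} P(\hat\lambda)^{\oplus m_\lambda}$ with each multiplicity $m_\lambda \geq 1$; the exceptional exclusion of $(\varnothing,\varnothing)$ in (\ref{wed}) when $r = s$ and $\delta = 0$ reflects the fact that in that case $P(\hat{(\varnothing,\varnothing)})$ genuinely fails to occur as a summand.

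Setting $\bar{e} := \sum_{\lambda \in \dot\Lambda_{r,s}} e_{\hat\lambda} \in K(m|n)$, the double centralizer property combined with Theorem~\ref{miss} identifies $\End_G(T^{r,s})^{\op}$ with $\bar{e}\, K(m|n)\, \bar{e}$ up to Morita equivalence, giving $B_{r,s}(\delta) \sim \bar{e}\, K(m|n)\, \bar{e}$. The final and most delicate step is a combinatorial identification $\bar{e}\, K(m|n)\, \bar{e} \cong e_{r,s} K(\delta) e_{r,s}$: the weight conventions differ (in $K(m|n)$ the weight $\hat\lambda$ carries a finite block of $\cross$'s on the far left and a finite block of $\circ$'s on the far right, beyond a sea of $\up$'s at infinity in both directions, whereas the Brauer-style $\lambda$ has $\up$'s only to the far left and $\down$'s only to the far right), but any basis element of $\bar{e}\, K(m|n)\, \bar{e}$ is supported in a fixed finite interval of vertices, so the multiplication is insensitive to cosmetic relabellings outside this interval. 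Matching conventions by relabelling the far-left $\cross$'s as $\up$'s and the far-right $\circ$'s as $\down$'s produces the desired isomorphism, which is manifestly independent of the auxiliary choices of $m$ and $n$. I expect the main obstacles to lie in pinning down the multiplicities $m_\lambda$ and handling the exceptional case in (\ref{wed}) cleanly, and in rigorously justifying that the relabelling preserves the cup/cap/line-segment classification driving the surgery rules (\ref{rule1})--(\ref{rule2}) inside the active zone.
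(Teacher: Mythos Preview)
Your proposal is correct and follows essentially the same route the paper indicates: the proof in \cite{BSnew} proceeds exactly by combining the Schur--Weyl duality between $GL(m|n)$ and $B_{r,s}(m-n)$ on mixed tensor space $V^{\otimes r}\otimes (V^*)^{\otimes s}$ with Theorem~\ref{miss}, then matching the resulting idempotent truncation of $K(m|n)$ with $e_{r,s}K(\delta)e_{r,s}$. Your description of the relabelling step is slightly garbled (it is the sea of $\up$'s on the far right of $\hat\lambda$ that must become $\down$'s, not a block of $\circ$'s), and the projectivity of $T^{r,s}$ for large $m,n$ needs a genuine argument rather than the Berezinian-twist sketch you give, but these are precisely the technical points you already flag as obstacles and they are handled in \cite{BSnew}.
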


Assume at last that $\Lambda_{r,s} = \dot\Lambda_{r,s}$ in (\ref{wed}).
This assumption ensures that $\dot\Lambda_{r,s}$ is an ideal in the poset $(\Lambda,\geq)$, where $\geq$ is the Bruhat order from section 3.
Hence the algebra $K_{r,s}(\delta)$ is a standard Koszul algebra
with weight poset $\Lambda_{r,s}$, as follows from
Theorem~\ref{nw} (though some care is needed
since $K(\delta)$ itself does not satisfy the requirement from the opening sentence of that theorem). So Theorem~\ref{fin} implies in particular that $B_{r,s}(\delta)$ is Koszul.

\vspace{2mm}
\noindent{\em Notes.}
We refer to \cite{CD} for a detailed account of the representation theory of the walled Brauer algebra in both the semisimple and non-semisimple cases. Essentially the same rule as described in this section for converting bipartitions to weight diagrams appears already in \cite[$\S$4]{CD}, and Theorem~\ref{fin} is implicitly conjectured in
\cite[Remark 9.4]{CD}.
Theorem~\ref{fin} is proved in \cite{BSnew}, as an application of the results about $GL(m|n)$ from \cite{BS4} (see 
the previous section), together with the Schur-Weyl duality between
$GL(m|n)$ and $B_{r,s}(m-n)$ arising from their commuting actions on mixed tensor space
$V^{\otimes r} \otimes (V^*)^{\otimes s}$. 
\fi

\end{document}